\documentclass[11pt,reqno]{amsart}

\newcommand\version{August 05, 2026}

\usepackage{esint}
\usepackage{amsthm}
\usepackage{amssymb}
\usepackage{mathrsfs}
\usepackage{graphicx}			
\usepackage{appendix}

\usepackage{stmaryrd}               
\usepackage{footmisc}               
\usepackage{paralist}
\usepackage{wrapfig}
\usepackage[draft]{pdfcomment}
\usepackage{bbm}
\usepackage{fontenc}
\usepackage{inputenc}
\usepackage[arrow, matrix, curve]{xy}
\usepackage{color}

\usepackage{tikz}
\usepackage{tikz-cd}

\usepackage{enumitem}

\usepackage{mathtools}

\newtheorem{thm}{Theorem}[section]

\newtheorem{lem}[thm]{Lemma}

\newtheorem{cor}[thm]{Corollary}
\newtheorem{prp}[thm]{Proposition}

\newtheorem{refdef}[thm]{Lemma on Definition}

\theoremstyle{remark}
\newtheorem{remsthm}[thm]{Remarks on Theorem}

\newtheorem{remsdfn}[thm]{Remarks on Definition}
\newtheorem{remslem}[thm]{Remarks on Lemma}

\theoremstyle{definition}
\newtheorem{dfn}[thm]{Definition}

\newtheorem*{qst}{Question}

\theoremstyle{remark}
\newtheorem{rem}[thm]{Remark}

\newtheorem{exl}{Example}

\newcommand{\R}{\mathbb{R}}
\newcommand{\N}{\mathbb{N}}

\newcommand{\HH}{\mathbb{H}}

\newcommand{\Lo}{\mathbb{L}}

\newcommand{\Cauchy}{\mathcal{C}}

	\usepackage[nodayofweek, level]{datetime}

\title[Hausdorff-type metric geometry of the space of Cauchy hypersurfaces]{Hausdorff-type metric geometry of \\ the space of Cauchy hypersurfaces 
}

\author[Christian Lange]{Christian Lange}
\address[Christian Lange]{Ludwig-Maximilians-Universit\"at M\"unchen, Mathematisches Institut\newline\indent Theresienstr. 39, 80333 München, Germany}
\email{lange@math.lmu.de}

\author[Jonas W. Peteranderl]{Jonas W. Peteranderl}
\address[Jonas W. Peteranderl]{Ludwig-Maximilians-Universit\"at M\"unchen, Mathematisches Institut\newline\indent Theresienstr. 39, 80333 München, Germany}
\email{peterand@math.lmu.de}

\subjclass[2020]{Primary: 53C23, 51K10, 53C50. Secondary: 53C80, 28A75, 53B30, 83C99.}

\keywords{Lorentzian geometry, Cauchy hypersurfaces, Hausdorff metric, completeness}
\date{\version}
\thanks{\copyright\, 2026 by the authors. This paper may be reproduced, in its entirety, for non-commercial purposes.}

\begin{document}

\begin{abstract}
We equip the space of Cauchy hypersurfaces in a globally hyperbolic spacetime with a natural Hausdorff-type metric. For a timelike Cauchy complete, smooth Lorentzian manifold with a compact Cauchy hypersurface, we show that the resulting metric space is geodesic and proper. We further discuss extensions to more general synthetic Lorentzian settings. For this purpose, we generalize results on completeness properties of spacetimes due to Beem and Takahashi.
\end{abstract}

\maketitle

 \section{Introduction and main results}

A Cauchy hypersurface can be defined as a subset of a spacetime that is intersected exactly once by every inextendible timelike curve, or more intuitively, by the wordline of every physical observer. As such, Cauchy hypersurfaces represent ``instants in time'' and arise, for instance, when solving the Einstein field equations with prescribed initial data. By a landmark result of Geroch \cite{Ge70}, their existence in a given smooth Lorentzian manifold characterizes the strongest and most important causality condition in general relativity: global hyperbolicity. This condition is essential, for instance, in the singularity theorems of Hawking and Penrose \cite{HP70,HE73} as well as for Lorentzian splitting theorems as developed in \cite{Es88,Ga89}. 

Motivated by questions from Lorentzian dynamics and by the lack of a canonical choice of a Cauchy hypersurface in a given spacetime, Monclair \cite{Mo23} recently initiated the study of the space of Cauchy hypersurfaces by analyzing a weak Riemannian metric, defined in terms of an $L^2$-inner product, on an infinite-dimensional manifold of compact spacelike Cauchy hypersurfaces in a smooth, spatially compact, globally hyperbolic Lorentzian manifold. In particular, Monclair shows that the induced path distance of this weak Riemannian metric is not only a pseudometric, but an actual metric \cite[Theorem~1.1]{Mo23}.

In this work we endow the space of all Cauchy hypersurfaces in a given spacetime with a natural Hausdorff-type metric, which can be regarded as an $L^\infty$-Finsler analog of Monclair's metric. While our metric is not Riemannian anymore, it has the advantage that it does not rely on smoothness assumptions so that it can also be defined in abstract synthetic settings; see Section~\ref{sec:lorentzian_spaces}.

For simplicity, we first give a more precise description in the setting of a smooth, globally hyperbolic Lorentzian manifold $(M,g)$. In this case, the induced Lorentzian distance $d$ on $M$ is finite and continuous; see \cite[Lemma~4.5]{BEE96}, for instance.
In the special case of Minkowski space $M=\Lo^{n+1}$, Bahn and Ehrlich \cite{BE99} defined
\[
    d_J(A,B) \coloneqq  \sup_{x\in A, y\in B} (d(x,y) +d(y,x))
\]
for two subsets $A,B \subset M$ and observed some formal resemblance with the Hausdorff metric. However,  
the function $d_J$ in general fails all properties of a metric except symmetry and non-negativity. Nevertheless, for achronal smooth spacelike hypersurfaces with boundary $S_1$ and $S_2$ that are sufficiently causally related it is shown in \cite[Lemma~5]{BE99} that $d_J(S_1,S_2)=0$ if and only if $S_1=S_2$. While a Cauchy hypersurface is in general not smooth, we show that $d_J$, restricted to the space $\Cauchy_M$ of Cauchy hypersurfaces in $M$, satisfies all properties of an \emph{extended} metric, i.e.~a metric that is allowed to take the value infinity. In fact, if there exists a compact Cauchy hypersurface in $M$, then all Cauchy hypersurfaces in $M$ are compact by \cite[Property 7]{Ge70}, and in this case the metric is finite.
Moreover, in case $(M,g)$ satisfies additional completeness conditions, we show that $\Cauchy_M$ is contained in a larger complete extended metric space $\Cauchy_M^w$ consisting of a priori more general ``weak Cauchy sets'', as considered in Definition~\ref{dfn:cauchy_set}, and
establish further properties for these (extended) metric spaces. More precisely, we show the following statements. Recall that an (extended) metric space $(X,d)$ is called \emph{geodesic} if any pair of points $p,q \in X$ (with $d(p,q)<\infty$) can be connected by a curve of length $d(p,q)$, \emph{locally compact} if every point has a compact neighborhood, and \emph{proper} if closed balls are compact.

\begin{thm}\label{thm:metric_properties} Let $(M,g)$ be a smooth, globally hyperbolic Lorentzian manifold. Then the space of Cauchy hypersurfaces $(\Cauchy_M,d_J)$ in $M$ is a (non-empty) extended metric space. Moreover, if $(M,g)$ is
\begin{enumerate}
\item timelike geodesically complete, then $(\Cauchy_M^w,d_J)$ is a complete extended metric space.
\item timelike Cauchy complete, then $(\Cauchy_M,d_J)$ is a complete extended metric space, 
\item spatially compact, then $(\Cauchy_M,d_J)$ is a locally compact metric space.
\item timelike Cauchy complete and spatially compact, then $(\Cauchy_M,d_J)$ is a proper geodesic metric space.
\end{enumerate}
\end{thm}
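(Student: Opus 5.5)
Before starting, one caveat: the paper's definitions of $\Cauchy_M^w$ and of the completeness notions come after this point, so I describe how I expect them to enter rather than use them verbatim.

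The plan has two stages. First the basic metric axioms, then items (1)--(4). A convenient reformulation: for Cauchy hypersurfaces $A,B$ in a globally hyperbolic $M$, every point $y\in B$ is either in $J^+(A)$ or in $J^-(A)$. The quantity $\sup_{x\in A} \bigl(d(x,y)+d(y,x)\bigr)$ is then the Lorentzian distance from $y$ to $A$ in the appropriate time direction, since $d(x,y)$ and $d(y,x)$ cannot both be positive. So $d_J(A,B)$ is the maximal signed time separation of points of $B$ from $A$, and it is symmetric in $A,B$.

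\textbf{Metric axioms.} Symmetry and non-negativity are immediate. If $d_J(A,B)=0$, then no point of $B$ lies in $I^\pm(A)$, so $B\subset A$ by the Cauchy property. Indeed, a point of $J^+(A)\setminus A$ lies in $I^+(A)$, because for an achronal edgeless set one has $J^+(A)\setminus I^+(A)\subset A$ in the Cauchy setting. Since a Cauchy hypersurface cannot be properly contained in another (an inextendible timelike curve through a point of $B\setminus A$ would otherwise meet $A$ as well), we get $A=B$.

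For the triangle inequality, take $x\in A$, $z\in C$ with, say, $d(x,z)>0$. I would pick a maximal causal geodesic from $x$ to $z$ (global hyperbolicity); it crosses $B$ at some point $y$. Reverse triangle inequality gives $d(x,z)=d(x,y)+d(y,z)\le d_J(A,B)+d_J(B,C)$.

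Non-emptiness is Geroch's theorem.

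\textbf{Items (1) and (2): completeness.} Take a $d_J$-Cauchy sequence $S_n$. Write each $S_n$ as a graph $t=f_n$ over a fixed Cauchy hypersurface via a Geroch/Bernal--Sánchez splitting $M\cong\R\times S$. Along each inextendible timelike curve, the intersection points with $S_n$ form a sequence whose mutual time separations are bounded by $d_J(S_n,S_m)\to0$. Completeness should force these points to converge:
\begin{itemize}
\item under timelike geodesic completeness, along geodesics;
\item under timelike Cauchy completeness, along any timelike curve of bounded length.
\end{itemize}
The limit should be a set met exactly once by every inextendible timelike curve (a weak Cauchy set, or a genuine Cauchy hypersurface in case (2)). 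Achronality passes to the limit because of the lower semicontinuity of $d$.

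\textbf{Main obstacle.} The hardest part is showing that small $d_J$ forces small \emph{coordinate} displacement, i.e.\ that points cannot escape to infinity while $d_J$ stays small. This is exactly where the Beem--Takahashi-type completeness results are needed. My approach would be to argue by contradiction: a divergent sequence of intersection points would produce an inextendible timelike curve of finite length, contradicting completeness.

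Having the pointwise limit, I would prove $d_J(S_n,S)\to0$ using the continuity of $d$.

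\textbf{Item (3): local compactness.} In the spatially compact case every Cauchy hypersurface is compact, so $d_J$ is finite. For local compactness, the closed $\varepsilon$-ball around $S$ lies in the compact region $J^+(S^-)\cap J^-(S^+)$, where $S^\pm$ are Cauchy hypersurfaces slightly to the future and past of $S$ (e.g.\ level sets of a time function). The hypersurfaces in the ball are graphs of functions on $S$ that are uniformly locally Lipschitz with respect to the splitting, so Arzelà--Ascoli gives subsequential convergence. The limit is again achronal and edgeless, hence a Cauchy hypersurface. Convergence of the graphs should give $d_J$-convergence, again by continuity of $d$ on compact sets.

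\textbf{Item (4): proper and geodesic.} Properness follows by combining (2) and the Arzelà--Ascoli argument, now on the larger compact region given by balls of arbitrary radius. For the geodesic property, it suffices by properness and completeness to produce midpoints. I would construct a midpoint of $A$ and $B$ as
\[
M_{1/2}=\{p:\ \text{signed distance of }p\text{ from }A\text{ equals signed distance of }p\text{ from }B\},
\]
or more simply take $\tau_A$, the signed Lorentzian distance to $A$, and consider the level set of $(\tau_A+\tau_B)/2$. Both are continuous and strictly increasing along timelike curves (the latter by timelike Cauchy completeness and compactness). Then I would verify $d_J(A,M_{1/2})\le\tfrac12 d_J(A,B)$ via the reverse triangle inequality.
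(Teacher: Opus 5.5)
Your overall architecture matches the paper's: metric axioms, a limit set built along curves, Arzel\`a--Ascoli in a compact region, and midpoints plus completeness. However, three steps do not go through as written.

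\textbf{Triangle inequality.} A maximal timelike geodesic from $x\in A$ to $z\in C$ need not meet $B$ at all: in $\Lo^{1+1}$ take $A=\{t=0\}$, $C=\{t=1\}$, $B=\{t=5\}$. You have to extend the geodesic to an inextendible timelike geodesic and let $y$ be its intersection with $B$. Besides $x\le y\le z$, you must also treat the cases $y\le x$ and $z\le y$. There the reverse triangle inequality gives $d(x,z)\le d(y,z)$ resp.\ $d(x,z)\le d(x,y)$, which suffices. This three-case property is exactly the paper's \emph{weakly timelike intercepting} condition (Definition~\ref{dfn:weak timelike-path-obser}, Lemma~\ref{lem:pre_time_path_observ}). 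Two smaller points: $d_J(A,A)=0$ requires achronality, and in your definiteness step the curve must pass through a point of $A\setminus B$.

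\textbf{Completeness.} The contradiction you propose, an inextendible timelike curve of finite length, is no contradiction. Minkowski space is timelike Cauchy complete yet contains inextendible timelike curves of finite length (see the example after the definition of Condition~B), and timelike geodesic completeness says nothing about non-geodesic curves. The relevant quantity is Lorentzian distance, not length. Suppose $p_n=\gamma(s_n)\in S_n$ run off to the future end of $\gamma$. Then the reverse triangle inequality gives $d(\gamma(s_N),\gamma(s))\le\sup_{m\ge N}d_J(S_N,S_m)$ for all $s\ge s_N$. This contradicts Cauchy completeness of $\gamma$, i.e.\ Condition~B, which follows from timelike Cauchy completeness by Proposition~\ref{prp:tcc_propB}. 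The paper encodes this by parametrizing with $d_J(\gamma(0),\gamma(t))=|t|$, so the domain is closed and the intersection parameters are bounded by $d_J(S_1,S_j)$.

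For (1) you must also work with the actual elements of $\Cauchy^w_M$. They only have to meet \emph{Cauchy complete} timelike curves exactly once, so they need not be graphs over a splitting. Correspondingly, the limit can only be shown to meet those curves exactly once, which is why (1) is stated for $\Cauchy^w_M$. Timelike geodesic completeness enters only by supplying such curves (complete timelike geodesics) through every point, which is needed for achronality and for the ``at most once'' step.

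\textbf{Properness.} In (4), the claim that balls of arbitrary radius lie in a compact region of $M$ needs proof. One way is to use Condition~B to show that the level sets of the signed distance to $S$ are compact Cauchy hypersurfaces bounding $N(S,R)$. The paper sidesteps this by applying Hopf--Rinow for complete, locally compact length spaces.

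\textbf{Midpoint.} Your midpoint is a valid alternative to the paper's $S''$ in Lemma~\ref{lem:mid}. Your first description $\tau_A=\tau_B$ is wrong, since it is empty for two parallel slices, but the zero set of $\tau_A+\tau_B$ works. The signed distances satisfy $\tau(q)-\tau(p)\ge d(p,q)$ for $p\le q$; this comes from the reverse triangle inequality, not from completeness. They are continuous by compactness (Lemma~\ref{lem:continuity_distance_fct}), and they tend to $\pm\infty$ along inextendible timelike curves by Condition~B. Hence $\{\tau_A+\tau_B=0\}$ is a Cauchy hypersurface. For $p$ in it with $\tau_A(p)=-\tau_B(p)=a\ge 0$ (the case $a<0$ is symmetric), any $x\in A$, $y\in B$ with $x\le p\le y$ give $d_J(A,B)\ge d(x,y)\ge d(x,p)+d(p,y)$. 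Taking suprema yields $2a\le d_J(A,B)$. This avoids parallel sets and the case analysis behind \eqref{eq:contained}. The zero set of $t\tau_A+s\tau_B$ handles general splits $s+t=d_J(A,B)$.
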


\addtocounter{thm}{-1}
\begin{remsthm}
(a) The assumptions timelike geodesically complete (see Subsection~\ref{sub:ms_spaces}) in (i) and  timelike Cauchy complete in (ii) are necessary; see Example ~\ref{exl:non_geod_complete_examples}. The condition ``timelike Cauchy complete'' is discussed in Section~\ref{sec:completeness}. Here we only mention that according to
    \cite[Lemma~9]{Se67} and \cite[Corollary~6]{Be76} (see also \cite[Theorem~6.12]{BEE96})
  every smooth, causally (i.e.~timelike and null)
 geodesically complete, globally hyperbolic Lorentzian manifold is timelike Cauchy complete, and that timelike Cauchy completeness implies that every weak Cauchy set is a Cauchy set; see Corollary~\ref{cor:weak cauchy_is_cauchy}.

    (b) The conclusion of local compactness in (iii) can be regarded as an analog of Blaschke's selection theorem for convex sets \cite[p.~62]{Bl16}. The assumption that $M$ is spatially compact, i.e.~that it has a compact Cauchy hypersurface (see Subsection~\ref{subsec:Cauchy_sets} for more details),  
    is necessary; see Subsection~\ref{sub:local_cpct}. 

    (c) For an application of the metric $d_J$ in the context of Hausdorff stability estimates for Lorentzian isoperimetric inequalities, we refer to \cite{LP25}. 
    \end{remsthm}

While our proof of Theorem~\ref{thm:metric_properties}, (i), uses the geodesic equation and our proof of the compactness statements in Theorem~\ref{thm:metric_properties}, (iii) and (iv), uses the Bernal--S\'anchez splitting theorem \cite{BS03} (see Subsection~\ref{sub:local_cpct}), we show variants of the other statements of Theorem~\ref{thm:metric_properties} in a more abstract setting, which comprises synthetic Lorentzian spaces in the sense of  Bykov--Minguzzi--Suhr \cite{MS24,BMS25}, Braun--McCann \cite{BM26}, and Kunzinger--Sämann \cite{KS18}, respectively; see Corollary~\ref{cor:metric_corollary}, Corollary~\ref{cor:compact_finite}, Proposition~\ref{prp:conditions_compl_metric}, and Corollary~\ref{cor:geodesic}. Such synthetic generalizations of Lorentzian manifolds in the spirit of metric geometry have been introduced with the motivation to identify minimal requirements for the logical dependence among fundamental results of Lorentzian geometry and causality theory (see \cite{KS18}) as well as to provide a mathematical framework for Lorentzian geometry and physical theories in low regularity with potential singularities as encountered, e.g.~in the presence of matter discontinuities and black holes, and in view of quantum gravity theories. For instance, we show that for a (timelike Cauchy complete) spatially compact, globally hyperbolic Kunzinger--Sämann Lorentzian length space $X$, the space of Cauchy sets $(\mathcal{C}_X,d_J)$ is a (complete, geodesic) metric space. Note that if $X$ is in addition second countable (and proper), then the existence of a Cauchy set in $X$ follows from a construction of a Cauchy time function on $X$ due to Minguzzi \cite[Theorem 21]{Mi26} (and Burtscher--Garc\'ia-Heveling \cite[Theorem 1.3]{BG25}, respectively); see Subsection~\ref{subsec:Cauchy_sets}. To provide further examples to which the statements of Theorem~\ref{thm:metric_properties} apply (see Corollary~\ref{cor:metric_corollary}), we explain in Appendix~\ref{app:time_functions} how Minguzzi's construction can be modified to also work for spacetimes with a non-empty chronological boundary.
\vspace{0.2cm}

For the discussion in this more general setting, we moreover generalize results by Beem \cite{Be76} and Takahashi \cite{Ta25} on completeness properties of globally hyperbolic spacetimes. Namely, 
Beem \cite{Be76} describes a completeness property of globally hyperbolic spacetimes in terms of the different equivalent conditions ``finite compactness'', ``timelike Cauchy completeness'', and ``Condition A''; see Section~\ref{sec:completeness}. Roughly speaking, this property is satisfied if the only way to ``escape to infinity'' while staying in finite Lorentzian distance to a point is to do so in the null cone of this point. 

Takahashi proves that in a globally hyperbolic Kunzinger--Sämann Lorentzian length space finite compactness implies timelike Cauchy completeness and that the latter implies Condition~A \cite[Theorem 3.9, (1) and (2)]{Ta25}. We further relax the assumptions of the first implication to ``causally simple Lorentzian pre-length space'', cf.~Section~\ref{sec:lorentzian_spaces}. In addition, we show a refinement of the second implication under relaxed assumptions. This involves the alternative completeness conditions, ``Condition B'' and ``Condition A$^*$'' as defined in Section~\ref{sec:completeness}, which are relevant for our proof of Theorem~\ref{thm:metric_properties}  and stronger than Condition~A in the settings considered in \cite{Be76,Ta25}. Moreover, Theorem \ref{thm:complete}  states that these implications do not only hold for Lorentzian pre-length spaces in the sense of Kunzinger--Sämann, but also in settings without a background metric as, for instance, in \cite{MS24,BM26,MS26}; see Remarks on Definition~\ref{dfn:inext_def} for a comparison and Section~\ref{sec:lorentzian_spaces} for definitions of these spaces.

\begin{thm}\label{thm:complete} Let $X$ be a first-countable, causally simple Lorentzian 
pre-length space.
\begin{enumerate}
\item If $X$ is finitely compact, then it is timelike Cauchy complete.
\item If $X$ is timelike Cauchy complete, then it satisfies Condition~B.
\end{enumerate} 
Moreover, for a globally hyperbolic Kunzinger--Sämann Lorentzian length space, a separable, timelike path-connected Bykov--Minguzzi--Suhr Lorentzian metric space without chronological boundary, and a separable, timelike path-connected, globally hyperbolic Braun--McCann length metric spacetime, in addition to (i) and (ii), Condition~B implies Condition~A$^*$, and Condition~A$^*$ implies Condition~A.
\end{thm}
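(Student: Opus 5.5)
We need to guess the definitions, since Section~\ref{sec:completeness} comes later. Following Beem, we take: \emph{finite compactness} means that for every point $p$ and every $c>0$ the sets $\{q \in J^+(p) : d(p,q)\le c\}$ and their past duals are relatively compact modulo the null cone. \emph{Timelike Cauchy completeness} means every timelike Cauchy sequence converges, where a timelike Cauchy sequence is a causal chain $x_1\le x_2\le\dots$ with $\sum d(x_i,x_{i+1})<\infty$ (or the tail analogue). \emph{Condition~A} says that any sequence in $J^+(p)$ with bounded Lorentzian distance to $p$ that leaves every compact set must approach the null cone. We assume Condition~B is the chain version of this, stated for causal chains of finite total length, and Condition~A$^*$ is a strengthening of A phrased along inextendible causal curves.

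For (i) we mirror Takahashi's argument in a form that only uses causal simplicity and first countability. Given a timelike Cauchy sequence $(x_n)$ we proceed as follows.
\begin{enumerate}
\item We place all $x_n$ in $J^+(x_1)$ with $d(x_1,x_n)\le \sum_i d(x_i,x_{i+1})<\infty$, using the reverse triangle inequality.
\item By finite compactness, we need only rule out escape along the null cone. The bounded-distance set $\{q\in J^+(x_1): d(x_1,q)\le c\}$ is handled as follows: either the sequence is bounded in a compact set, or we replace $x_1$ by a later $x_k$ and repeat.
\item We extract a convergent subsequence using compactness together with first countability, so that sequential compactness is available.
\item We show the full sequence converges to the limit. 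Causal simplicity makes $J^\pm$ closed, so $x_n\le x$ for all $n$. Any second accumulation point $y$ then satisfies $x\le y\le x$, and hence $x=y$ by causality.
\end{enumerate}

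For (ii), suppose Condition~B fails. Then there is a chain or sequence of finite total Lorentzian length with no convergent subsequence. We would build from it a timelike Cauchy sequence without a limit, which contradicts timelike Cauchy completeness. To do this we use closedness of $J$ to keep the sequence causally ordered and diagonalize via first countability.

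For the ``moreover'' part we must pass from chains to curves. In each of the three settings, global hyperbolicity and length structure (a KS length space; separable timelike path-connected BMS or BM spaces) supply maximizing causal curves between causally related points and a limit curve theorem. Given an inextendible causal curve violating A$^*$, we sample points on it to obtain a chain that violates B. For A$^*$ implying A, we take a sequence violating A, join $p$ to the points by maximizers, and apply the limit curve theorem to get an inextendible limit curve of finite length that violates A$^*$.

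The main obstacle is this last step in the BMS and BM settings. There the limit curve theorem and the absence of a chronological boundary must replace the background metric, and we expect to rely on the separability and path-connectedness hypotheses together with results in \cite{MS24,BM26}.
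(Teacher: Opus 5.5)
Your proposal rests on guessed definitions that differ from the paper's, and the guess for timelike Cauchy completeness breaks part (i). Your Step~1 uses the reverse triangle inequality backwards. It gives $d(x_1,x_n)\ge\sum_{i<n}d(x_i,x_{i+1})$, so summable consecutive distances bound nothing. Under your definition (i) is in fact false: the null chain $x_n=(n,n)$ in $\Lo^{1+1}$ has $\sum_i d(x_i,x_{i+1})=0$ and no limit, yet Minkowski space is finitely compact. The paper's Definition~\ref{dfn:cauchy_complete} requires $x_j\ll x_{j+1}$ and $\sup_k d(x_j,x_{j+k})\to0$, which bounds $d(x_{j_0},x_{j_0+k})$ directly. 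Finite compactness with $x=x_{j_0}$, $y=x_{j_0+1}$ then gives accumulation points for all tail subsequences. Your Step~4 (closedness of $J^\pm$ plus causality forcing a unique accumulation point) is exactly Lemma~\ref{lem:unique_monot_limit}, so (i) survives once the definition is corrected.

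Part (ii) contains no actual argument. Condition~B is not about chains: it says a future inextendible \emph{timelike curve} $\gamma$ with $t\mapsto d(\gamma(t_0),\gamma(t))$ bounded must attain $\sup I$. The missing argument runs as follows. Take $t_j\nearrow\sup I\notin I$. Now the reverse triangle inequality works in your favour: $0<d(\gamma(t_j),\gamma(t_{j+k}))\le d(\gamma(t_0),\gamma(t_{j+k}))-d(\gamma(t_0),\gamma(t_j))$. The right side tends to $0$ uniformly in $k$, since the function is monotone and bounded. Hence $(\gamma(t_j))$ converges by timelike Cauchy completeness. Lemma~\ref{lem:unique_monot_limit} makes the limit a future endpoint of $\gamma$, and Lemma~\ref{lem:endpoint} (causal simplicity, push-up) turns it into a timelike extension, contradicting inextendibility.

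In the ``moreover'' part, sampling points on an inextendible causal curve $\gamma$ violating A$^*$ cannot contradict Condition~B. B only constrains inextendible timelike curves, and $\gamma$ may be null. The missing idea is to shadow $\gamma$ from below by a timelike curve. For $x\ll y=\gamma(0)$ and $t_j\nearrow\sup I$, use openness of $I^\pm$, push-up and timelike path-connectedness to concatenate a timelike $\tilde\gamma$ from $x$ with $\tilde\gamma(t_j)\in I^+(\tilde\gamma(t_{j-1}))\cap I^-(\gamma(t_j))$ and $\rho(\tilde\gamma(t_j),\gamma(t_j))\le1/j$, where $\rho$ is a metric inducing the topology. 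If $\tilde\gamma$ were extendible, $\gamma(t_j)$ would converge to its endpoint. Then \cite[Proposition~18]{Mi26} would give $\gamma$ a future endpoint, contradicting Lemma~\ref{lem:no_bubbling_(t)-inext}; this is where separability, empty chronological boundary and $I(X)=X$ enter. So B applies to $\tilde\gamma$ and makes $d(x,\tilde\gamma(t))$ unbounded. Since $d(x,\tilde\gamma(t_j))\le d(x,\gamma(t_j))$ and $t\mapsto d(x,\gamma(t))$ is nondecreasing, $d(x,\gamma(t))\to\infty$.

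Finally, A$^*\Rightarrow$A is immediate, because A is A$^*$ restricted to causal geodesics. No maximizers or limit curve theorem are needed, so the step you flag as the main obstacle is trivial. The genuinely nontrivial step is B$\Rightarrow$A$^*$.
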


In particular, for a smooth, globally hyperbolic Lorentzian manifold the conditions finite compactness, timelike Cauchy completeness, Condition~B, Condition~A$^*$, and Condition~A are all equivalent, since Condition~A implies finite compactness in this case; see \cite[Theorem~5]{Be76} and Corollary \ref{cor:completeness_smooth_case} for a refined statement. This refinement concerns another interpretation of Conditions B, A$^*$, and A in case the topology is induced by a metric, namely in terms of inextendible \emph{locally Lipschitz continuous} causal curves instead of inextendible \emph{continuous} causal curves; see Section \ref{sec:inext}. In the former sense, Theorem \ref{thm:complete} holds true for Kunzinger--Sämann Lorentzian pre-length resp. length spaces under a necessary (see Remarks on Definition~\ref{dfn:inext_def}, (e)) additional metric-compatibility assumption in statement (ii) (see Proposition \ref{prp:tcc_propB}), which is satisfied by a Kunzinger--S\"amann Lorentzian length spaces by definition. Further note that, unlike \cite[Theorem 3.9]{Ta25}, the assumptions of the first part of Theorem~\ref{thm:complete} also allow the possibility of a non-empty chronological boundary. The stronger assumptions in the second part of Theorem~\ref{thm:complete} are required partly to guarantee the existence of enough inextendible timelike curves (see Corollary \ref{cor:inex_timelike}) as used for Condition B; see Remark \ref{rem:boundary_version_prop_B} though for a variant of this statement that allows a non-empty chronological boundary. In the BMS setting the assumptions separable and no chronological boundary are equivalent to the assumption ``countably generated'' from \cite{BMS25}; see Subsection \ref{sub:BMS_spaces}.

Finally, the impact of the conditions in Theorem~\ref{thm:complete} is further elucidated by Proposition~\ref{prp:geodesic_time_function}, in which we show how Condition B allows an easy construction of a Cauchy time function starting from a Cauchy set and using geodesic rays with respect to our Hausdorff-type metric on the space of Cauchy sets.

\vspace{0.2cm}

\textbf{Structure of the paper.} In Section~\ref{sec:lorentzian_spaces} we recall basic notions from Lorentzian geometry and give an overview on different approaches to synthetic Lorentzian geometry. Following this, we discuss several notions of inextendible causal curves in Section~\ref{sec:inext} and prove some basic related existence results, which underlie our main theorems. In Section~\ref{sec:completeness} we establish relations between various completeness conditions and thereby prove Theorem~\ref{thm:complete}. After introducing different notions of Cauchy sets and discussing their existence and
properties, we finally prove Theorem~\ref{thm:metric_properties} 
in Section~\ref{sec:hausdorff_cauchy}.
\vspace{0.4cm}

\textbf{Acknowledgements.} The authors are grateful to Annegret Burtscher, Melanie Graf, Ettore Minguzzi, Clemens Sämann, and Stefan Suhr for answers to different questions, useful comments, and hints to the literature. The second author acknowledges partial support through the Deutsche Forschungsgemeinschaft (DFG, German Research Foundation) grants FR 2664/3-1 and TRR 352-Project-ID 470903074 and through the Studienstiftung des deutschen Volkes.

\section{Lorentzian geometry preliminaries}\label{sec:lorentzian_spaces}

To start with, we recall some basic notions from classical Lorentzian geometry.
For extensive introductions, we refer, for instance, to \cite{BEE96,On83}. Throughout, we assume $(M,g)$ to be a Lorentzian manifold, which we define as a smooth manifold with a smooth Lorentzian metric, unless stated otherwise. A tangent vector $v$ of $M$ is called \emph{causal} if $g(v,v) \leq 0$, \emph{timelike} if $g(v,v)<0$, and \emph{null} if $g(v,v)=0$. A \emph{time-orientation} on $M$ is a continuous choice of a connected component of the set of causal tangent vectors without the origin in the tangent spaces of $M$. We suppose that $(M,g)$ is time-oriented and call a causal tangent vector \emph{future-directed} if it belongs to the chosen component. A locally Lipschitz continuous curve in $M$ is called \emph{future-directed causal} if almost all its tangent vectors are future-directed causal. 

Let $p,q\in M$. The \emph{causal relation} $\leq$ on $M$ is defined by setting $p \leq q$ if there exists a future-directed causal curve from $p$ to $q$. We write $|v|= \sqrt{-g(v,v)}$ for a causal tangent vector $v$. The \emph{Lorentzian distance} $d(p,q)$ is defined as the supremum of the Lorentzian lengths $\int | \dot \gamma(t)| \,\mathrm dt$ over all causal curves from $p$ to $q$, if $p\leq q$, and as $d(p,q)=0$ otherwise. The \emph{timelike relation} $p \ll q$ denotes $d(p,q) > 0$.

Before we continue to discuss the causal structure and the Lorentzian geometry of $(M,g)$ at the end of Subsection~\ref{sub:ms_spaces}, we first describe an axiomatization in the synthetic spirit of \cite{KS18,BM26,MS26} to provide a more general framework. 
Early works in this direction  are due to Busemann \cite{Bu67} and to Kronheimer--Penrose \cite{KP67}. From Subsection~\ref{sub:BMS_spaces} on, which can be skipped on first reading, we embed and compare the approaches from
\cite{KS18,BM26,MS26} in the framework of 
Subsection~\ref{sub:ms_spaces}, focusing on those parts of the theory that are relevant for our work.

\subsection{Synthetic Lorentzian geometry} \label{sub:ms_spaces}
We first recall the concept of a causal space in the sense of Kunzinger--Sämann \cite{KS18}, which is inspired by the idea of a Kronheimer--Penrose causal space \cite{KP67}. 

Let $X$ be a set, $\leq$ a reflexive and transitive relation on $X$, and $\ll$ a transitive relation on $X$ contained in $\leq$. Then $(X,\leq,\ll)$ is called a \emph{causal space} with \emph{causal relation} $\leq$  and \emph{timelike relation} $\ll$. Natural examples are provided by Lorentzian manifolds as above.

For a causal space $(X,\leq,\ll)$ the \emph{chronological future} and \emph{past} of $x\in X$ are defined by
\begin{equation*}
    I^+(x) \coloneqq \{y \in X \mid x \ll y \} \quad \text{and}\quad  
    I^-(x) \coloneqq \{y \in X \mid y \ll x \} \,,
\end{equation*}
 respectively. Similarly, the \emph{causal future} and \emph{past}  of a point $x\in X$ are defined by
\begin{equation*}
        J^+(x) \coloneqq \{y \in X \mid x \leq y \} \quad \text{and}\quad
    J^-(x) \coloneqq \{y \in X \mid y \leq x \} \,,
\end{equation*} respectively.
We write $I^\pm(A)= \bigcup_{x\in A}I^\pm(x)$ and $J^\pm(A)= \bigcup_{x\in A}J^\pm(x)$ for a subset $A \subset X$.

A causal space $(X,\leq,\ll)$ is called \emph{chronological} if $\ll$ is irreflexive, i.e.~if $x\not \ll x$ for all $x \in X$. The following boundary notions for a chronological causal space $(X,\leq,\ll)$ occur below but can be skipped by those only interested in Theorem~\ref{thm:metric_properties}. The \emph{future} resp.~\emph{past causal boundary}
\[
     \partial^\pm_{ca} X \coloneqq \{x\in X \mid J^\pm(x)= \{x\}\}
\]
of $X$ is contained in the \emph{future} resp.~\emph{past chronological boundary}
\[
        \partial^\pm_{ch} X \coloneqq \{x\in X \mid I^\pm(x)=\emptyset\} 
\]
of $X$. The \emph{chronological} resp.~\emph{causal boundary} of $X$ is defined as $\partial_{ch} X \coloneqq \partial^-_{ch} X \cup \partial^+_{ch} X$  resp.~$\partial_{ca} X \coloneqq \partial^-_{ca} X \cup \partial^+_{ca} X$. We say that $X$ has \emph{no bubbling boundary} if $\partial^+_{ca} X = \partial^+_{ch} X$ and $\partial^-_{ca} X = \partial^-_{ch} X$; cf.~\cite{GKSS20,GS22}. In this case we simply write $\partial^+X\coloneqq \partial^+_{ca} X = \partial^+_{ch} X$ and $\partial^-X\coloneqq \partial^-_{ca} X = \partial^-_{ch} X$. The \emph{spacelike boundary} of $X$ is defined as $\partial^+_{ch}X \cap \partial^-_{ch}X$.

\vspace{0.2cm}
We now follow the approaches of Braun--McCann \cite{BM26} and Mondino--Sämann \cite{MS26} to encode a causal structure and a Lorentzian distance in a single function.

An \emph{extended time separation function} on a set $X$ is a map $\ell : X \times X \rightarrow \{-\infty\} \cup [0,\infty]$ that satisfies $\ell(x,x) \geq 0$ for all $x\in X$ and the \emph{(extended) reverse triangle inequality}
\begin{equation}
\label{eq:extended_reverse_triangle}
 \ell(x,y) + \ell(y,z) \leq \ell(x,z) \qquad \text{for all } x,y,z \in X \, ,
\end{equation}
with the convention that $-\infty + \infty=- \infty$. Then $\ell(x,x) \in \{0,\infty\}$ for all $x\in X$, and the associated \emph{Lorentzian distance} $d \coloneqq  \max(0,\ell)$ (also referred to as \emph{time separation function} $\tau$ in \cite{KS18,MS26}) satisfies the reverse triangle inequality \eqref{eq:extended_reverse_triangle} for all $x,y,z \in X$ with $x\leq y\leq z$. The corresponding \emph{causal relation} $\leq$ and \emph{timelike relation} $\ll$ are defined as
\[
    J\coloneqq \leq \coloneqq \ell^{-1}([0,\infty])\quad \text{and}\quad  I\coloneqq I_d \coloneqq \ll \coloneqq d^{-1}((0,\infty]) \subset J \,,
\]
respectively. By the reverse triangle inequality $(X,\leq,\ll)$ is a (chronological if $\ell < \infty$) causal space and the \emph{push-up principle} $J\circ I \cup I \circ J \subset I$ holds, where $J \circ I \coloneqq\{(x,z)\in X \times X \mid \exists y\in X: x\leq y \ll z\}$ and $I\circ J$ is defined analogously. The extended time separation function $\ell$ can be recovered from the Lorentzian distance $d$ and the causal relation via
\begin{align} \label{eq:recover_etsf}
\ell(x,y) = \begin{cases} d(x,y) &\text{if } x\leq y \,,\\
                    - \infty &\text{else\,.}
\end{cases} 
\end{align}
In fact, for any map $d:X \times X \rightarrow [0,\infty]$ and any reflexive and transitive $J \subset X \times X$ with $I_d \subset J$ and $d$ satisfying the reverse triangle inequality \eqref{eq:extended_reverse_triangle} for all $x,y,z \in X$ with $(x,y),(y,z) \in J$, the function $\ell$ defined by \eqref{eq:recover_etsf} is an extended time separation function with associated Lorentzian distance $d$ and causal relation $J$. For instance, this provides a natural extended time separation function in case of a Lorentzian manifold discussed above.

Equipped with a topology with respect to which $I^{\pm}(x)$ is open for all $x\in X$, the pair $(X,\ell)$ is a \emph{Lorentzian pre-length space} in the sense of \cite[Definition~2.3]{MS26}. For instance, a  Lorentzian manifold gives rise to such a Lorentzian pre-length space by lower semicontinuity of its Lorentzian distance; see \cite[Lemma~14.17]{On83}, for instance. Under further assumptions the pair $(X,\ell)$ could be referred to as a ``metric spacetime'' in the sense of \cite{BM26}; cf.~Subsection~\ref{sub:BM_spaces}.

Physically meaningful spacetimes satisfy additional causality conditions.   (The underlying topological preordered space of) a Lorentzian pre-length space $(X,\ell)$ is called
\begin{enumerate}
\item \emph{causal}, if $\leq$ is antisymmetric, i.e.~if $x \leq y \leq x$ implies $x=y$ for all $x,y\in X$, 
\item \emph{causally simple}, if it is causal and the sets $J^{\pm}(x)$, $x\in X$, are closed,
\item \emph{globally hyperbolic}, if it is causal, the causal relation is closed, and for every compact subset $K\subset X$ the \emph{causal emerald} $J(K)\coloneqq J^+(K) \cap J^-(K)$ is compact,
\end{enumerate}
and the following hierarchy holds: (iii)$\Rightarrow$(ii)$\Rightarrow$(i); see \cite[p.~2]{Mi23}. Note that in case of (iii) also all \emph{causal diamonds} $J(x,y)\coloneqq J^+(x) \cap J^-(y)$ are compact. Further note that $J(x,y)=J(\{x,y\})$ for $x\leq y$ and $J(x,y)=\emptyset$ otherwise. Global hyperbolicity, arguably the strongest (and most well-known) causality condition that can be imposed, has several equivalent characterizations in different settings; see \cite{Ge70, BS03, BS07, Mi23,BG24, BG25, BM25} to name a few. This notion will be discussed further below in the setting of more specific Lorentzian pre-length spaces. 

\vspace{0.2cm}

In the following, all curves in a Lorentzian pre-length space $(X,\ell)$ are assumed to be continuous and defined on an interval $I \subset \R$, which should not be confused with the chronological relation $I$. (In the setting of KS-Lorentzian pre-length spaces discussed in Subsection~\ref{sub:ks_spaces}, all curves are assumed to be locally Lipschitz continuous, and the following notions are understood in terms of locally Lipschitz continuous curves.) A curve $\gamma: I \rightarrow X$ is called future-directed \emph{causal} resp.~ \emph{timelike} if $t<s$ for $t,s \in I$ implies $\gamma(t) \leq \gamma(s)$ resp.~$\gamma(t) \ll \gamma(s)$. In the following, we omit the specification future-directed, i.e.~all causal curves are assumed to be future-directed unless stated otherwise. We call a Lorentzian pre-length space $X$ \emph{causally} resp.~\emph{timelike path-connected} if every pair of points  $x,y\in X$ with $x\leq y$ resp.~$x\ll y$ can be connected by a causal resp.~timelike curve. In contrast, note that a Lorentzian pre-length space is called causally path-connected in \cite{KS18} if it is causally and timelike path-connected in our sense. The \emph{Lorentzian length} of a causal curve $\gamma:[a,b] \rightarrow X$ is defined as
\begin{equation*}
L(\gamma) \coloneqq \inf \left\{ \sum_{i=0}^{N-1} d(\gamma(t_i),\gamma(t_{i+1})) \mid N \in \N, a=t_0 < t_1 <\dots < t_N=b \right\} \, .
\end{equation*}
For more general
intervals $I$, the length is defined as the supremum of the length of $\gamma_{|I'}$ over all compact intervals $I' \subset I$. For locally Lipschitz continuous curves in a Lorentzian manifold, these notions of a causal curve and its Lorentzian length coincide with the previous ones.

The Lorentzian pre-length space $X$ is called \emph{geodesic}  if for every pair of points $x,y\in X$ with $x \leq y$ there exists a \emph{maximal} causal curve $\gamma$ from $x$ to $y$, i.e.~a causal curve $\gamma$ from $x$ to $y$ such that $d(x,y) = L(\gamma)$. Adapting the notion in \cite[Definition~2.41]{BM26}, we call $X$ \emph{regular} if every injective maximal causal curve connecting timelike related points in $X$ is timelike. If $X$ is geodesic and regular, then it is also timelike path-connected. 

\vspace{0.2cm}

We continue to discuss the above notions in the context of our prime example of a Lorentzian pre-length space, namely the one induced by a  Lorentzian manifold $(M,g)$. It is causally path-connected by definition. By the push-up principle it is also timelike path-connected; see, for instance, \cite[Proposition 2.4.18]{Ch11}. That it is regular (with respect to locally Lipschitz continuous curves) is, for instance, shown in \cite[Theorem~3.18]{KS18}. In fact, every maximal curve in $M$ admits a reparametrization that satisfies the geodesic equation (see e.g.~\cite[Proposition~5.34]{On83}) and hence has constant velocity. If it connects timelike related points, then it must thus be timelike even in the stronger sense that (almost all) its derivatives are timelike. Note that, for instance, the smooth curve $t\mapsto (t, \sin t, \cos t)$ in Minkowski space $\Lo^{1+2}$ is timelike only in the weaker sense of Subsection~\ref{sub:ms_spaces}. However, this distinction does not really make a difference in the following when working on smooth, globally hyperbolic Lorentzian manifolds; see Remark~\ref{rem:timelike_variation}. While the Lorentzian distance between two causally related points can by definition be approached by the Lorentzian lengths of connecting causal curves, the existence of maximal connecting curves requires 
further assumptions. By the Avez--Seifert theorem \cite{Av63,Se67},  global hyperbolicity is a sufficient condition; see e.g.~\cite[Proposition~14.19]{On83}. Note that for Lorentzian manifolds global hyperbolicity as defined above is equivalent to causality and compactness of causal diamonds and further coincides with the traditional one as in \cite{Ge70, HE73,Be76}, for instance; see \cite{Mi23}. 

For later purposes, we recall here that a smooth Lorentzian manifold $(M,g)$ is called \emph{null} resp.~\emph{timelike geodesically complete} if every null resp.~timelike solution of the geodesic equation can be extended to all of~$\R$.

In the following subsections, which can be skipped on first reading, we discuss and compare more specific classes of Lorentzian pre-length spaces.

\subsection{Lorentzian metric and length spaces \`a la Bykov--Minguzzi--Suhr} \label{sub:BMS_spaces}

Based on properties of globally hyperbolic Lorentzian manifolds as given in \cite[Theorem 1.1]{BM25}, Bykov, Minguzzi, and Suhr \cite{MS24,BMS25} provide 
a synthetic definition of a spacetime that is inherently ``globally hyperbolic''. In \cite[Definition~2.1]{BMS25} they define a \emph{Lorentzian metric space} 
as a set $X$ with a function $d:X\times X \rightarrow [0,\infty)$, called the Lorentzian distance, which satisfies the following properties
\begin{enumerate}
\item $d$ satisfies the reverse triangle inequality for all $x,y,z\in X$ with $d(x,y)>0$, $d(y,z)>0$.
\item There exists a topology on $X$ for which $d$ is continuous with respect to the product topology and for which the sets $\{(x',y')\mid d(x',y')\geq \varepsilon\}\cap (\overline{I(x,y)}\times \overline{ I(x,y)}) $ with $x,y\in X$, $\varepsilon>0$, and $I(x,y)\coloneqq I^+(x)\cap I^-(y)$ are compact.

\item $d$ distinguishes points in the sense that
\begin{equation*}
\forall x\neq y \, \,\,\, \exists z  : d(x,z)\neq d(y,z) \text{ or } d(z,x) \neq d(z,y)
\, .
\end{equation*}
\end{enumerate}
The timelike relation $I=I_d$ is defined as in Subsection~\ref{sub:ms_spaces}, so it only depends on the Lorentzian distance. By condition (iii),  the spacelike boundary of a Lorentzian metric space consists of at most one point. If the chronological boundary of $X$ is empty, then the topology satisfying (ii) is (Hausdorff, locally compact and) unique \cite[Theorem~7.2]{BMS25}. In the following, we always think of a Lorentzian metric space as equipped with some topology satisfying (ii). 

A Lorentzian metric space is called \emph{countably generated} if there exists a countable set $\mathscr{S}$ such that for each $y\in X$ there are $x,z \in \mathscr{S}$ with $x \ll y \ll z$. Note that the chronological boundary of a countably generated Lorentzian metric space is empty. Moreover, it is second countable, $\sigma$-compact, and a Polish space, i.e.~separable and completely metrizable; see \cite[Theorem~7.2]{BMS25}. Conversely, by continuity of the Lorentzian distance, every separable Lorentzian metric space without chronological boundary is countably generated. 

By \cite[Theorem~4.4]{BMS25} and the discussion following \eqref{eq:recover_etsf}, a Lorentzian metric space can be regarded as a chronological and causal Lorentzian pre-length space $(X,\ell)$ with respect to its \emph{maximal causal relation}
\begin{equation*}
 J_d\coloneqq \{(x,y) \in X\times X \mid  d(z,x) \leq d(z,y) \text{ and } d(y,z) \leq d(x,z) \text{ for all }z\in X\} \, ,
\end{equation*} 
which is in addition closed by continuity of $d$; see \cite[Theorem~4.4]{BMS25}. In particular, 
the associated Lorentzian pre-length space $(X,\ell)$ is causally simple; see \cite[p.~2]{Mi23}. Moreover, if the chronological boundary of $X$ is empty, then $(X,\ell)$ is globally hyperbolic by \cite[Theorem 4.5]{BMS25}. Note that the compactness condition in (ii) is implied by compactness of all $J_d$-causal diamonds if we assume all the other defining properties. 

A \emph{Lorentzian length space} in the sense of \cite[Definition~5.1]{BMS25} is a geodesic Lorentzian metric space. We refer to it as a \emph{BMS-Lorentzian length space} in the following.

\subsection{Length metric spacetimes \`a la Braun--McCann} \label{sub:BM_spaces}
A length metric spacetime in the sense of Braun--McCann \cite{BM26} is a first-countable, causal Lorentzian pre-length space $(X,\ell)$ with the following additional properties.
\begin{enumerate}
\item $\ell$ is upper semicontinuous, and the associated Lorentzian distance $d$ is lower semicontinuous. 
\item Every $x\in X$ is the midpoint of a timelike  curve $\gamma:[0,1] \rightarrow X$, i.e.~$x=\gamma(1/2)$.
\item The Lorentzian distance of timelike related points $x \ll y$ is realized as the supremum of the Lorentzian lengths of (injective) causal curves from $x$ to $y$.
\end{enumerate}
We refer to such a Lorentzian pre-length space as a \emph{BM-length metric spacetime}. By condition (ii), the chronological boundary of a BM-length metric spacetime is empty.

By \cite[Corollary~5.27]{BMS25} and \cite[Theorem 2.13]{BM26}, the Lorentzian pre-length spaces associated (as in the previous section) to BMS-Lorentzian length spaces with the additional property that every point is the midpoint of a timelike curve coincide with globally hyperbolic BM-length metric spacetimes. In particular, a BM-length metric spacetime is geodesic; cf.~\cite[Theorem~2.37]{BM26}.

\subsection{Lorentzian pre-length and length spaces \`a la Kunzinger--S\"amann}\label{sub:ks_spaces}

Kunzinger and Sämann define a Lorentzian pre-length space as a Lorentzian pre-length space in the sense of Subsection~\ref{sub:ms_spaces}  equipped with a metric $\rho$ that induces its topology and with respect to which the Lorentzian distance $d$ is lower semicontinuous; see \cite[Definition~2.8]{KS18}. We refer to such a Lorentzian pre-length space as a \emph{KS-Lorentzian pre-length space}. In this setting all curves and all notions introduced at the end of Subsection~\ref{sub:ms_spaces} are understood in terms of locally Lipschitz continuous curves. A KS-Lorentzian pre-length space that is ``regular(ly localizable)'' in the sense of \cite[Definition 3.16]{KS18} is regular in the sense of Subsection~\ref{sub:ms_spaces}; cf.~\cite[Theorem~3.18]{KS18}.

The notion of a KS-Lorentzian pre-length space underlies another notion introduced by Kunzinger and Sämann \cite{KS18}. Roughly speaking, they define a \emph{Lorentzian length space} as a causally and timelike path-connected KS-Lorentzian pre-length space for which $d(x,y)$ for $x\ll y$ is always realized as the supremum of the Lorentzian lengths of causal curves from $x$ to $y$ and for which further causality and localizability properties motivated by Lorentzian manifolds are satisfied; see \cite[Definition 3.22]{KS18} for the details. 

In particular, Lorentzian manifolds, even with metrics of low regularity (see \cite[Theorem~5.12]{KS18}), give rise to a Lorentzian length space. However, (strong) regular localizability only holds if they are sufficiently smooth; cf.~\cite[Example 3.24]{KS18}. We refer to a Lorentzian length space in the sense of Kunzinger--Sämann as a \emph{KS-Lorentzian length space} in the following. 

By timelike path-connectedness and localizability (see \cite[Definition~3.16 (ii)]{KS18}), every point of a KS-Lorentzian length space is the midpoint of a timelike curve. In particular, its chronological boundary is empty and $X$ coincides with its chronological convex hull, i.e.~$X=I(X)  \coloneqq I^+(X) \cap I^-(X)$.
A KS-Lorentzian length space is globally hyperbolic if and only if it is causal and all its causal diamonds are compact; see \cite[Corollary~3.8]{Mi23}. If $(X,\ell,\rho)$ is a globally hyperbolic KS-Lorentzian length space with Lorentzian distance $d$ and causal relation $J$, then $(X,d)$ is a BMS-Lorentzian length space with $J_d=J$. Indeed, every globally hyperbolic KS-Lorentzian length space is geodesic and has a finite, continuous Lorentzian distance; see \cite[Theorem 3.28 and 3.30]{KS18}. Condition (i) in Subsection~\ref{sub:BMS_spaces} is immediate. Moreover, by \cite[Proposition 3.13]{ACS20}, $X$ also satisfies  condition (iii). As all causal diamonds in $X$ are compact and $X$ is Hausdorff, also condition (ii) holds for the metric topology of $X$; see also \cite[Theorem~4.8]{BMS25}. The reverse triangle inequality for $\ell$ shows that $J \subset J_d$. As $X$ has an empty chronological boundary and is timelike path-connected, for $(x,y) \in J_d$ we can choose a sequence of points $x_j \ll x$ converging to $x$ as $j\to\infty$. Then $x_j \in I^-(y) \subset J^-(y)$ by definition of $J_d$, so $x \leq y$ as $J^-(y)$ is closed by causal simplicity. Moreover, by the discussion in Subsection~\ref{sub:BMS_spaces} the KS-Lorentzian length space is second-countable (or equivalently separable) if and only if the corresponding BMS-Lorentzian metric space is countably generated.

By the discussion in Subsection~\ref{sub:BM_spaces}, we also see that if $(X,\ell,\rho)$ is a globally hyperbolic KS-Lorentzian length space, then $(X,\ell)$ with the topology induced by $\rho$ 
is a globally hyperbolic BM-length metric spacetime.

Other examples of globally hyperbolic KS-Lorentzian length spaces are provided by Lorentzian cones over proper geodesic metric spaces as in \cite[Propositions~4.8 and 4.10]{AGKS23}. In the following subsection we discuss a specific class of those.

\subsection{Conical Minkowski spacetimes} \label{sub:conical_Minkowski}

Here we discuss special subsets of the Minkowski space $(\Lo^{n+1},\langle \cdot, \cdot \rangle)$, which can be regarded as Lorentzian cones with a (singular) warped product Friedmann–Lema\^{i}tre–Robertson–Walker metric (cf.~\cite{AGKS23}), as examples in the context of the above spacetime notions. This class of examples comprises simple cosmological models such as the \emph{Milne universe} and allows for an explicit description of the space of (strong) Cauchy sets; see Lemma~\ref{lem:cauchy_graph}.

Given a timelike vector $v_0 \in \Lo^{n+1}$, the (upper) hyperboloid in $\Lo^{n+1}$ is given by
\[
			\HH^n \coloneqq  \{v \in \Lo^{n+1} \mid \langle v,v\rangle = -1, \langle v,v_0\rangle < 0\} \subset I^+(O) \, .
\] 
The Minkowski inner product of $\Lo^{n+1}$ restricts to a complete Riemannian metric on $\HH^n$ of constant negative sectional curvature $-1$, i.e.~$\HH^n$ is isometric to the $n$-dimensional hyperbolic space. We write $\pi: I^+(O) \rightarrow \HH^n$ for the radial projection and $d_{\HH}$ for the induced metric on~$\HH^n$.

In \cite{LP25} a \textit{domain} in $\HH^n$ is defined as a non-empty connected $n$-dimensional Lipschitz submanifold of $\HH^n$ with boundary. For simplicity, we always assume in addition that a domain is smooth and closed here. By a \textit{conical Minkowski spacetime} we here mean a cone $M=\R_{\geq 0} \Omega$ for some domain $\Omega \subset \HH^n$. In \cite{LP25}, in contrast, only  the \emph{regular part} of $M_{\mathrm{r}}=\R_{> 0} \Omega$ is considered. Note that $M$ is convex if and only if $\Omega$ is convex. We further note that $\Lo^{n+1}$ induces the structure of a time-oriented Lorentzian manifold with boundary on $M_{\mathrm{r}}$. 

The lengths of causal curves contained in a conical Minkowski spacetime $M$ induce an extended time separation function $\ell=\ell_M$ and a Lorentzian distance $d=d_M$ on $M$ as in the case of a Lorentzian manifold discussed before. With this Lorentzian distance the regular part of a conical Minkowski spacetime $M$ can be represented as a Lorentzian warped product $\R_{> 0} \times_{f(t)=t^2}\Omega$; see \cite[Section~2]{AGKS23}. If $u,v\in M$ satisfy $\ell(u,v)\geq 0$ and the straight segment between $u$ and $v$ is contained in $M$, then $\ell(u,v
)=|u-v|$. In the non-convex case, it holds that 
\[
   d(u,v)^2=  |v|^2 +|u|^2 - 2|u||v| \cosh(d_{\Omega}(\pi(u),\pi(v))) \, ,
\]
for $u,v\in M$ with $\ell(u,v) \geq 0$ (see \cite[Eq.~(1)]{AGKS23}), where $d_{\Omega}$ is the length metric on $\Omega \subset \HH^n$ induced by $d_{\HH}$; cf.~\cite[Chapter~2]{BBI01} and \cite[Subsection~2.4]{LP25}.

A conical Minkowski spacetime is a geodesic KS-Lorentzian pre-length space with a finite, continuous Lorentzian distance; see \cite[Proposition~2.2, Corollary~2.4]{AGKS23}.  However, only its regular part is a \emph{globally hyperbolic} KS-Lorentzian \emph{length} space (see \cite[Theorem 4.8, Proposition 4.10]{AGKS23}). On the one hand, this is because the chronological past of the origin $O$ is empty, which violates the localizability condition in \cite[Definition 3.16, (ii)]{KS18}, and, on the other hand, causal diamonds $J(O,x)$, $x\in M$, are not compact in general, e.g.~for $\Omega=\HH^n$. Similarly, only the regular part of a conical Minkowski spacetime is a length metric spacetime \`a la Braun--McCann as the origin is not the midpoint of a timelike curve. By continuity of the Lorentzian distance, the maximal causal relation of a conical Minkowski spacetime $M$ is closed. Moreover, it coincides with the standard causal relation. This implies that all causal diamonds of a conical Minkowski spacetime over a compact domain with respect to its maximal causal relation are compact. The distinguishing property (iii) of Lorentzian metric spaces follows from the fact that the regular part is a KS-Lorentzian length space. In total, we see that a 
conical Minkowski spacetime over a compact domain is a geodesic Lorentzian metric space  with non-empty chronological boundary. In particular, it is not countably generated in the sense of Bykov--Minguzzi--Suhr, which is a common assumption in their work \cite{BMS25}; see also~\cite[Theorem 21]{Mi26}. Nevertheless, all assumptions of our extension of Minguzzi's Cauchy time function construction (see Theorem~\ref{thm:cauchy_time_functions_exist}) are satisfied.

\section{Inextendible curves}\label{sec:inext}

Recall from Subsection~\ref{sub:ms_spaces} that all causal and timelike curves in a Lorentzian pre-length space are assumed to be future-directed. In the following sections all statements about Lorentzian pre-length spaces also hold in the setting of KS-Lorentzian pre-length spaces with 
locally Lipschitz continuous curves instead of continuous curves, unless the KS-setting is not explicitly treated separately.

Our main results crucially depend on the notion of an inextendible curve. In the following, we introduce different versions of this notion and prove some preliminary results for them in the setting of a Lorentzian pre-length space $X$. While similar discussions can be found in \cite{BG25} and \cite{Mi26}, we give a unified definition in a more general setting and provide some alternative perspectives. To further motivate our discussion here, we point out that the definitions of inextendibility in \cite[Definition~3.10]{KS18}  and \cite[Definition~2.8]{BG25} allow for rather pathological, undesirable inextendible causal curves; see Remarks on Definition~\ref{dfn:inext_def}, (d). This is also one reason why existing characterizations of inextendible causal curves in KS-Lorentzian pre-length spaces turn out to be contradictory without further assumptions; see Remarks on Definition~\ref{dfn:inext_def}, (e). Moreover, the following definition is more general in the sense that we also define inextendible causal curves on (half-)closed intervals and introduce the notion of t-inextendibility for timelike curves. While the former is necessary for considering the completeness notions covered by Theorem~\ref{thm:complete} for spacetimes with nonempty boundary, the latter turns out to be useful for proving the existence of inextendible timelike curves;  see Lemma~\ref{lem:inex_extens} and Corollary~\ref{cor:inex_timelike}. 

\begin{dfn}\label{dfn:inext_def}
For a Lorentzian pre-length space $X$, a causal (timelike) curve $\gamma : I \rightarrow X$ defined on an interval $I \subset \R$ is called \emph{future (t-)extendible}, if there exists an order preserving homeomorphism $\varphi:I \rightarrow \hat I$ onto some interval $\hat I \subset \R$, an interval $\tilde I \subset \R$ containing $\hat I$ and some $t\in \tilde I$ with $t>s$ for all $s\in \hat I$, and a causal (timelike) curve $\tilde \gamma : \tilde I \rightarrow X$ that restricts to $\gamma\circ \varphi^{-1}$ on $\hat I$.

Otherwise, we call $\gamma$ \emph{future (t-)inextendible}. The notion \emph{past (t-)inextendible} is defined analogously. The curve $\gamma$ is called \emph{(t-)inextendible} if it is both future and past (t-)inextendible. 
\end{dfn}

This definition is clearly parametrization independent, but the reparametrization $\varphi$ is unhandy in practice. Instead, in the following, we will mostly work with the subsequent equivalent reformulation, which, in the KS-setting, is based on the fact that every locally Lipschitz continuous path in a metric space can be parametrized by arclength; see, for instance, \cite[Proposition~2.5.9]{BBI01}.

\addtocounter{thm}{-1}
\begin{refdef}  For a Lorentzian pre-length space $X$, a causal (timelike) curve $\gamma : I \rightarrow X$ defined on an interval $I \subset \R$, and parametrized by arclength in the KS-setting, is future (t-)extendible, if there exists  a larger interval $\tilde I \subset  [-\infty,\infty]$, resp. $\tilde I \subset  (-\infty,\infty)$ in the KS-setting, a $t\in \tilde I$ with $t>s$ for all $s\in  I$, and a causal (timelike) curve $\tilde \gamma : \tilde I \rightarrow X$ that restricts to $\gamma$ on $ I$. An analogous past statement holds. 
\end{refdef}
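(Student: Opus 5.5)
We prove the two directions of the equivalence between Definition~\ref{dfn:inext_def} and its reformulation separately, for the future version; the past version is symmetric. The basic tool is transport of extensions along order preserving homeomorphisms of intervals. Precomposing a causal (timelike) curve with such a homeomorphism again gives a causal (timelike) curve, because these notions only depend on the order of parameters and on continuity. In the KS-setting local Lipschitz continuity must also be preserved, and this is where the arclength normalization enters.

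\emph{Reformulation implies Definition.} Suppose $\tilde\gamma:\tilde I\to X$ extends $\gamma$ on $\tilde I\subset[-\infty,\infty]$ and some $t\in\tilde I$ exceeds $I$. Since $\tilde I$ may contain $\pm\infty$, we fix an order preserving homeomorphism $\psi:[-\infty,\infty]\to[-1,1]$, for instance $\psi=\tanh$ extended to $\pm\infty$. We then take $\varphi=\psi|_I$, $\hat I=\psi(I)$, the interval $\psi(\tilde I)\subset\R$, the point $\psi(t)$, and the curve $\tilde\gamma\circ\psi^{-1}$. This curve is causal (timelike) and restricts to $\gamma\circ\varphi^{-1}$ on $\hat I$. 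In the KS-setting we have $\tilde I\subset(-\infty,\infty)$ by assumption, so we take $\varphi=\mathrm{id}$.

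\emph{Definition implies reformulation.} Suppose $\varphi,\hat I,\tilde I,t,\tilde\gamma$ are as in Definition~\ref{dfn:inext_def}. We shrink $\tilde I$ to the interval $\hat I\cup[\sup\hat I,t]$, which is still an interval with the required properties. In the general continuous setting we want an order preserving homeomorphism $\Phi$ from this interval onto an interval $J\supset I$ of $[-\infty,\infty]$ with $\Phi|_{\hat I}=\varphi^{-1}$. We write $b=\sup I$, which may equal $+\infty$. If $b\in I$, we extend $\varphi^{-1}$ affinely on $[\sup\hat I,t]$ onto $[b,b+1]$; here $b<\infty$. If $b\notin I$ and $b<\infty$, we map $[\sup\hat I,t]$ affinely onto $[b,b+1]$. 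In this case $\varphi^{-1}(s)\to b$ as $s\to\sup\hat I$ by monotonicity and surjectivity, so $\Phi$ is continuous. If $b=+\infty\notin I$, we send $[\sup\hat I,t]$ to $\{+\infty\}\cup$ nothing. Concretely, we take $J=I\cup\{+\infty\}$, define $\tilde\gamma(+\infty):=\tilde\gamma(\sup\hat I)$, and restrict to $\hat I\cup\{\sup\hat I\}$. This explains why the reformulation allows $[-\infty,\infty]$. Then $\tilde\gamma\circ\Phi^{-1}$ is the desired extension.

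\emph{KS-setting.} Here $\gamma$ is parametrized by arclength, so $b=\infty$ exactly when $\gamma$ has infinite $\rho$-length. An extension $\tilde\gamma$ as in the definition is locally Lipschitz on the compact interval $[\inf',t]$ near $\sup\hat I$. Hence $\gamma\circ\varphi^{-1}$ has finite length near the future end, so $b<\infty$. We then reparametrize $\tilde\gamma|_{[\sup\hat I,t]}$ by arclength (\cite[Proposition~2.5.9]{BBI01}) and concatenate it with $\gamma$. This gives a locally Lipschitz curve on $I\cup[b,b+L]$ that is still causal (timelike), since arclength reparametrization is monotone; constant pieces must be handled by using a monotone, not necessarily injective, reparametrization.

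\emph{Main obstacle.} The main obstacle is the KS case: we must check that concatenation preserves local Lipschitz continuity, and that the finite-length argument shows $\sup I<\infty$. This relies on the local Lipschitz property of $\tilde\gamma$ near the junction point.
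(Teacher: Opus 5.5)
Your proposal is correct and follows essentially the paper's argument. The continuous case is a pure order-preserving reparametrization, which the paper calls clear and you spell out, including why $+\infty$ must be admitted in $\tilde I$. In the KS-setting, both you and the paper use that the extension is Lipschitz on a compact parameter interval, so the future part of the arclength-parametrized $\gamma$ has finite length, which forces $\sup I<\infty$ and permits a locally Lipschitz concatenation.

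The only loose ends are trivial degenerate cases. If $t=\sup\hat I\notin\hat I$, send this single point to $b$ instead of mapping it onto $[b,b+1]$. If the added piece has length $0$ and $\sup I\in I$, use a constant extension, or simply reparametrize the added piece affinely rather than by arclength.
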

\begin{proof} The statement about continuous extendibility is clear. If, in a KS-Lorentzian pre-length space $X$, a causal curve $\gamma:I\rightarrow X$  parametrized by arclength is future extendible in the sense of the definition, then $\gamma$ restricted to $I\cap[0,\infty)$ has finite length since length is invariant under reparametrization and locally Lipschitz continuous curves on finite compact intervals are Lipschitz continuous. Moreover, the locally Lipschitz continuous causal  future extension $\hat \gamma$ of the reparametrization $\tilde \gamma$ of $\gamma$ provided by the definition gives rise to a locally Lipschitz continuous causal  extension of $\gamma$. The converse is obvious.
\end{proof}

In general, the two notions of inextendibility defined above differ in a KS-Lorentzian pre-length space; see Remark on Definition~\ref{dfn:inext_def}, (d). To avoid this subtlety, we consider KS-Lorentzian pre-length spaces $(X,\rho,\ll,\leq,d)$, which are \emph{$\rho$-compatible}, that is, for every $x\in X$ there is a neighborhood $U$ of $x$ and some $C>0$ such that every causal (timelike) curve contained in $U$ has $\rho$-arclength at most $C$; see \cite[Definition~3.13]{KS18}. We also refer to this property as \emph{metric-compatibility}.

\addtocounter{thm}{-1}
\begin{remsdfn}
(a) In a metric-compatible KS-Lorentzian pre-length space $X$, a causal curve $\gamma:[a,b)\rightarrow X$ is future (t-)inextendible in the locally Lipschitz continuous sense if and only if it is future (t-)inextendible in the continuous sense. Indeed, if $\gamma$ is future (t-)extendible by a continuous causal (timelike) curve $\tilde \gamma$ and without loss of generality assumed to be parametrized by $\rho$-arclength, then there is a neighborhood $U$ of $\tilde \gamma(b)$, some $C>0$, and some $t_0 \in [a,b)$ such that $\gamma([t_0,b)) \subset U$ and hence such that the length of  $\gamma$ restricted to $[t_0,b)$ is bounded by $C$. Since also the length of $\gamma$ on the compact finite interval $[a,t_0]$ is finite, we see that $b<\infty$ and thus that $\tilde \gamma_{|[a,b]}$ is a locally Lipschitz continuous causal (timelike) extension of $\gamma$. Analogous statements hold if $\gamma$ is defined on $(a,b)$ and if $\gamma$ is past (t-)inextendible.

(b) If a metric-compatible KS-Lorentzian pre-length space is in addition
causally (timelike) path-connected (or, more generally, if it has causal (timelike) extensions; see Definition \ref{dfn:extensions}), then for any interval $I$ a causal curve $\gamma:I\rightarrow X$ is future (t-)inextendible in the locally Lipschitz continuous sense if and only if it is future (t-)inextendible in the continuous sense. Analogous past statements hold. Note that, in particular, every KS-Lorentzian length space is metric-compatible as well as causally and timelike path-connected by definition.

(c) Under additional assumptions an alternative characterization of (t-)inextendibility can be given in terms of endpoints; cf.~\cite[Definition~11]{Mi26} and Lemma~\ref{lem:no_bubbling_(t)-inext} below.

(d) Without the reparametrization or the fixed parametrization by arclength in the KS-setting, the locally Lipschitz continuous causal curve $\gamma:[0,1) \rightarrow \R^{1+1}$ in Minkowski space $\R^{1+1}$ defined by $\gamma(t)= (1-\sqrt{1-t},0)$ would not be future extendible as a locally Lipschitz continuous causal curve (i.e.~in the sense of \cite[Definition~3.10]{KS18}, \cite[Definition~3.5]{Ta25}, and \cite[Definition~2.8]{BG25}). Allowing inextendible causal curves of this type would lead to an undesirable behavior of the notions of Condition A and Cauchy sets. For instance, Minkowski space would neither satisfy Condition A nor contain Cauchy sets. 

(e) The metric-compatibility assumption is necessary for the equivalence stated in (a) as the following example shows. We consider $X=[0,\infty]$ as a causal space with causal relation $\leq$ and timelike relation $<$. With respect to the order topology on $X$ and the Euclidean topology on $\R^2$ the map
\[
    c:X \rightarrow \R^2\,, \quad t\mapsto \begin{cases}
    \left(\frac{\cos t}{1+t},\frac{\sin t}{1+t} \right)  & \text{if } t\in [0,\infty)\,, \\
    0 & \text{else}\,, 
    \end{cases}   
\]
is a homeomorphism onto its image. Endowed with the metric $\rho$ obtained by restricting the Euclidean metric to $c(X)$ and identifying the latter with $X$, the pair $(X,\rho)$ is a compact (in particular proper) metric space. The map
\[
    d:X \times X \rightarrow [0,\infty)\,, \quad (t,s)\mapsto \begin{cases}
    \arctan(s)- \arctan(t) & \text{if } t\leq s\,, \\
    0 & \text{else}\,, 
    \end{cases}
\]
where $\arctan(\infty)\coloneqq \pi/2$, defines a (finite, continuous) Lorentzian distance on $X$ which turns $(X,\rho,<,\leq,d)$ into a KS-Lorentzian pre-length space. As a KS-Lorentzian pre-length space, $X$ is (globally hyperbolic in the sense of Subsection~\ref{sub:ms_spaces}, and, in particular,) locally causally closed in the sense of \cite[Definition 3.4]{KS18}. Since the curve $c_{|[0,\infty)}$ has infinite length, there exists a (future-directed) causal curve $\gamma:[0,\infty)\rightarrow X$ parametrized by arclength with $\lim_{t\rightarrow \infty} \gamma(t)=\infty \in  X$, which, according to \cite[Lemma~3.12]{KS18}  and \cite[Lemma~2.9]{BG25}, should be both future extendible and future inextendible. In fact, $\gamma$ is future extendible as a continuous causal curve and future inextendible as a locally Lipschitz continuous causal curve.
\end{remsdfn}

A causal curve on an \emph{open} interval in a Lorentzian metric space is inextendible in the sense of
\cite[Definition 11]{Mi26} if it has neither \emph{future} nor \emph{past endpoints}, which are defined as follows.

\begin{dfn}
A causal curve $\gamma:I \rightarrow X$ is said to have a \emph{future endpoint} if $\lim_{t \nearrow\sup I} \gamma(t)$ exists. We say that an \emph{endpoint is attained} if $\sup I \in I$. \emph{Past endpoints} and their attainability are defined analogously.
\end{dfn}

Clearly, having an endpoint is a necessary condition for attaining an endpoint. The next lemma states assumptions under which a future (past) endpoint provides a natural future (past) extension for a given causal curve on an open interval. In particular, it shows that for a first-countable Lorentzian metric space the notion of inextendibility in Definition~\ref{dfn:inext_def} and in \cite{Mi26} coincide for curves on open intervals; see also Lemma \ref{lem:no_bubbling_(t)-inext}.

\begin{lem}\label{lem:endpoint}
Suppose that $X$ is a first-countable, causally simple (metric-compatible KS-)Lorentz\-ian pre-length space. If a causal curve $\gamma:I \rightarrow X$ (parametrized by $\rho$-arclength in the KS-setting) has a future endpoint, then ($\sup I < \infty$ and) $\tilde \gamma:I \cup \{\sup I\} \rightarrow X$ defined by $\tilde{\gamma}(t)=\gamma(t)$ for $t\in I$ and $\tilde \gamma(\sup I)= \lim_{t\nearrow 
\sup I} \gamma(t)$ is a causal curve that restricts to $\gamma$ on $I$. Moreover, if $\gamma$ is timelike, then so is $\tilde \gamma$. An analogous past statement holds.
\end{lem}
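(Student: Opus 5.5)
Write $b \coloneqq \sup I$ and $p \coloneqq \lim_{t\nearrow b}\gamma(t)$. The plan is to check that $\tilde\gamma$ is continuous and causal, and then, in the timelike case, that it is timelike.

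\textbf{Continuity.} In the continuous setting, continuity of $\tilde\gamma$ at $b$ is exactly the existence of the limit. In the metric-compatible KS-setting we first have to show $b<\infty$. Choose a neighborhood $U$ of $p$ and a constant $C>0$ as in the definition of $\rho$-compatibility. Since $\gamma(t)\to p$, there is $t_0\in I$ with $\gamma([t_0,b))\subset U$. Then $\gamma|_{[t_0,b)}$ is a causal curve in $U$, so its $\rho$-arclength is at most $C$. Because $\gamma$ is parametrized by arclength, $b-t_0\le C$, hence $b<\infty$. Moreover $\tilde\gamma$ is $1$-Lipschitz on $[t_0,b]$, since $\rho(\gamma(s),\gamma(t))\le |s-t|$ passes to the limit. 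So $\tilde\gamma$ is locally Lipschitz.

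\textbf{Causality.} Fix $s\in I$. For all $t\in I$ with $t>s$ we have $\gamma(t)\in J^+(\gamma(s))$. By causal simplicity $J^+(\gamma(s))$ is closed, so the limit $p$ also lies in $J^+(\gamma(s))$, i.e.\ $\gamma(s)\le p$. Together with reflexivity at $b$, this shows $\tilde\gamma$ is causal. First-countability is what lets us speak of sequential limits $\gamma(t_n)\to p$ and conclude membership in a closed set. Strictly, closedness alone already gives this, but first-countability is convenient if one wants to phrase it with sequences.

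\textbf{Timelike case.} This is the main obstacle, since $I^+(\gamma(s))$ is open, not closed, so we cannot simply pass to the limit. Instead use the push-up principle $I\circ J\subset I$. Given $s\in I$, pick $s'\in I$ with $s<s'$. Then $\gamma(s)\ll\gamma(s')$ because $\gamma$ is timelike, and $\gamma(s')\le p$ by the causality step. Push-up gives $\gamma(s)\ll p$.

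The past statement follows by the symmetric argument, using closedness of $J^-(x)$ and $J\circ I\subset I$. Thus the only genuinely delicate point is finiteness of $b$ in the KS-setting via metric-compatibility; the rest is closedness plus push-up.
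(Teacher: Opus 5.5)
Your proof is correct and follows the same route as the paper. Continuity of $\tilde\gamma$ comes from the existence of the limit, causality from closedness of $J^+(\gamma(s))$ under causal simplicity, and timelikeness from the push-up principle $I\circ J\subset I$; in the KS-setting, $\sup I<\infty$ and local Lipschitz continuity come from $\rho$-compatibility, where the paper just cites Remark (a) on Definition~\ref{dfn:inext_def}, which is exactly the arclength bound $b-t_0\le C$ you write out.
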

\begin{proof}Since continuity and sequential continuity are equivalent in first-countable spaces, the curve $\tilde \gamma$ is continuous. Since $X$ is causally simple, $\tilde \gamma$ is causal. If $\gamma$ is timelike, then so is $\tilde \gamma$ by the push-up principle from Subsection~\ref{sub:ms_spaces}. In the KS-setting, Remark (a) on Definition~\ref{dfn:inext_def} now shows that $\sup I < \infty$ and that $\tilde \gamma$ is locally Lipschitz continuous. The past statement follows analogously.
\end{proof}

The existence of endpoints can be characterized in terms of the notion of imprisonment (see Lemma~\ref{lem:inextendible_charac}), which we describe next.

\begin{dfn}
A causal curve $\gamma:I \rightarrow X$ is called \emph{future} resp.~\emph{past
imprisoned} if there exists a compact set $K\subset X$ and some $a\in I$ such that $\gamma(t) \in K$ for every
$t> a$ resp.~$t<a$.
\end{dfn}

The equivalence in Lemma~\ref{lem:inextendible_charac} relies on the next lemma on the existence and uniqueness of ``causal limits'', which will be applied frequently in the following. The argument in its proof can also be found in the proof of \cite[Proposition~18]{Mi26}.

\begin{lem}\label{lem:unique_monot_limit}
    Let $X$ be a first-countable, causally simple Lorentzian pre-length space. Let $\alpha: \Sigma \rightarrow X$ be a function for some non-empty $\Sigma\subset \R$. 
    Assume the following two conditions:
    \begin{enumerate}
        \item $\alpha$ is causal, that is, $\alpha(t)\leq \alpha(s)$ for all $t\leq s$. 
        \item Every sequence $(\alpha(s_j))_{j\in \N}$ with $(s_j)_{j\in\N}\subset \Sigma$ and $s_j\nearrow s^*\coloneqq \sup \Sigma\in (-\infty,\infty]$ (resp.~$s_j\searrow s_*\coloneqq \inf \Sigma\in [-\infty,\infty)$) has an accumulation point.
    \end{enumerate} Then 
    all sequences $(\alpha(s_j))_{j\in \N}$ in (ii) have the same limit, which coincides with $\alpha(s^*)$ if $s^* \in \Sigma$ (resp.~$\alpha(s_*)$ if $s_* \in \Sigma$).    In particular, if the interval $(a,s^*)$ for some $a<s^{*}$ (resp.~$(s_*,a)$ for some $a>s_{*}$) is contained in $\Sigma$, then $\lim_{s\nearrow s^*} \alpha(s)$ (resp.~$\lim_{s\searrow s_*} \alpha(s)$) exists, and it coincides with $\alpha(s^*)$ if $s^* \in \Sigma$ (resp.~$\alpha(s_*)$ if $s_* \in \Sigma$).
\end{lem}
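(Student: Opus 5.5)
We treat the future case; the past case follows by reversing the time orientation.

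\emph{Step 1: two accumulation points coincide.} Let $(s_j)$ and $(s'_j)$ be two sequences in $\Sigma$ with $s_j\nearrow s^*$ and $s'_j\nearrow s^*$. Let $x$ be an accumulation point of $(\alpha(s_j))$ and $x'$ one of $(\alpha(s'_j))$. Since $X$ is first-countable, we may pass to subsequences with $\alpha(s_j)\to x$ and $\alpha(s'_j)\to x'$.

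\emph{Step 2: $x\le x'$.} Fix $k$. If $s^*\notin\Sigma$, or if $s^*\in\Sigma$ and the sequences do not eventually equal $s^*$, then $s'_j\ge s_k$ for all large $j$. Condition (i) gives $\alpha(s_k)\le\alpha(s'_j)$, i.e.\ $\alpha(s'_j)\in J^+(\alpha(s_k))$. This set is closed by causal simplicity, so $x'\in J^+(\alpha(s_k))$ for every $k$. Hence $\alpha(s_k)\in J^-(x')$ for all $k$, and closedness of $J^-(x')$ gives $x\le x'$.

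\emph{Step 3: equality.} By symmetry $x'\le x$, and antisymmetry of $\le$ (causality) gives $x=x'$. Taking $(s'_j)=(s_j)$, this shows every subsequence of $(\alpha(s_j))$ has the same unique accumulation point $x$. To conclude that $\alpha(s_j)\to x$, argue by contradiction. If not, there is a neighbourhood $U$ of $x$ and a subsequence staying outside $U$. That subsequence is again of type (ii), so by hypothesis it has an accumulation point, which must equal $x$ by Step 1. This contradicts staying outside $U$. Thus all such sequences converge to the same limit.

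\emph{Step 4: the case $s^*\in\Sigma$.} If $s^*\in\Sigma$, use the constant sequence $s_j=s^*$; this is presumably allowed as a nondecreasing sequence, and otherwise the same closedness argument applies. Its limit is $\alpha(s^*)$, so the common limit is $\alpha(s^*)$. More directly, $\alpha(s_j)\le\alpha(s^*)$ gives $x\le\alpha(s^*)$. For the reverse inequality one needs an accumulation point above $\alpha(s^*)$, which is the constant-sequence case again.

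\emph{Step 5: the ``in particular'' statement.} Suppose $(a,s^*)\subset\Sigma$. Every sequence $s\nearrow s^*$ (eventually in $\Sigma$) gives the same limit. In a first-countable space, sequential limits suffice to conclude that $\lim_{s\nearrow s^*}\alpha(s)$ exists.

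\emph{Main obstacle.} The delicate step is the order argument in Step 2 when the sequences can equal $s^*$ or are only nondecreasing. It needs careful handling of ``eventually $s'_j\ge s_k$''. When the sequences are eventually constant equal to $s^*$, the limit is trivially $\alpha(s^*)$.
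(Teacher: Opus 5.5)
\textbf{Steps 1--3 and 5.} These are essentially the paper's argument. You interleave the two sequences using (i), use closedness of $J^+(\alpha(s_k))$ and then of $J^-(x')$ to get $x\le x'$, symmetrize, and conclude by antisymmetry. Your Step 3 also supplies a detail the paper skips: passing from uniqueness of accumulation points to actual convergence. In Step 5, what you actually need is that every sequence $s_j\to s^*$ with $s_j<s^*$ has a monotone subsequence. The sequential criterion for $\lim_{s\nearrow s^*}\alpha(s)$ works because of the countable base at $s^*$ in $\R$, not because $X$ is first countable.

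\textbf{The gap is Step 4, and it cannot be closed.} When $s^*\in\Sigma$, consider the mixed pair: $(s_j)$ strictly below $s^*$ and $(s'_j)$ constant equal to $s^*$. Here the interleaving only gives $x\le\alpha(s^*)$. The reverse direction would need $s_j\ge s^*$ for large $j$, so your appeal to ``the constant-sequence case again'' is circular. The clause is in fact false. In $\Lo^{1+1}$ take $p\ll q$, $\Sigma=[0,1]$, $\alpha\equiv p$ on $[0,1)$ and $\alpha(1)=q$. Then $\alpha$ is causal and every sequence $(\alpha(s_j))$ has an accumulation point, yet $\lim_{s\nearrow 1}\alpha(s)=p\neq\alpha(1)$. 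If constant sequences are admitted in (ii), even the claim that all sequences in (ii) share a limit fails.

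So ``coincides with $\alpha(s^*)$ if $s^*\in\Sigma$'' needs an extra hypothesis, such as continuity of $\alpha$ at $s^*$. Alternatively, it must be weakened to $\lim_j\alpha(s_j)\le\alpha(s^*)$, which is what your argument actually proves. You should have flagged this rather than writing ``presumably allowed''.

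The paper's proof has the same lacuna: its choice of $t_{m_k}$ with $\alpha(s_j)\le\alpha(t_{m_k})$ breaks down in exactly this mixed case. This is harmless there, because every application first reduces to $s^*\notin\Sigma$ (Lemma~\ref{lem:inextendible_charac}, Propositions~\ref{prp:finitely_compact_weakly_complete} and~\ref{prp:tcc_propB}). In that case your Steps 1--3 and 5 are complete.
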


In our  applications, the set $\Sigma$ will be $\N$ or an interval.

\begin{proof} We only consider the case $s_j\nearrow s^*$. The other case is analogous. Assume by contradiction that there are sequences $(\alpha(s_j))_{j\in \N}$ and $(\alpha(t_j))_{j\in \N}$ with $s_j,t_j\nearrow s^*$ as $j\to \infty$ and $x\neq y \in X$ such that $\alpha(s_j)\to x$, $\alpha(t_j)\to y$ for $j\to\infty$. Choose a subsequence $(\alpha(t_{m_j}))_{j\in \N}$ with $$\alpha(s_j)\leq \alpha(t_{m_k})$$ for all $k\geq j$. This is possible as $\alpha$ is causal. Since $X$ is causally simple, $J^{+}(\alpha(s_j))$ is closed, so $$\alpha(s_j)\leq \lim_{k\to \infty}\alpha(t_{m_k})=y\,.$$ As this relation holds for all $j\in \N$ and $J^{-}(y)$ is closed (again by causal simplicity of $X$), we deduce $$x=\lim_{j\to\infty}\alpha(s_j)\leq y\,.$$
By symmetry of the argument, this also gives $y\leq x$, and thus causality of $X$ implies $x=y$, a contradiction.
\end{proof}

As a first application of this lemma, we establish the following equivalence between imprisonment and the existence of an endpoint.

\begin{lem}\label{lem:inextendible_charac}
Suppose that $X$ is first-countable, causally simple Lorentzian pre-length space. A causal curve $\gamma:I \rightarrow X$ is future imprisoned if and only if it has a future endpoint. An analogous past statement holds. 
\end{lem}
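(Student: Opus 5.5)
The argument has two directions, and the nontrivial one reduces to Lemma~\ref{lem:unique_monot_limit}. We treat only the future statement; the past statement follows by reversing all inequalities and using the ``resp.'' parts of that lemma.

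\emph{Endpoint implies imprisonment.} Suppose $x\coloneqq\lim_{t\nearrow \sup I}\gamma(t)$ exists. If $\sup I\in I$, the set $K=\{\gamma(\sup I)\}$ is compact, and $\gamma$ lies in $K$ for no $t>\sup I$, so the condition holds vacuously with $a=\sup I$. Otherwise, fix any $a\in I$ and set
\[
K\coloneqq \gamma([a,\sup I))\cup\{x\}.
\]
We check compactness of $K$ directly by sequences, since $X$ is only first-countable and need not be metrizable. Take an open cover of $K$. One member $U$ contains $x$. By convergence there is $b\in[a,\sup I)$ with $\gamma([b,\sup I))\subset U$. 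The remaining set $\gamma([a,b])$ is compact as the continuous image of a compact interval, so finitely many members cover it. Hence $K$ is compact and $\gamma(t)\in K$ for all $t>a$.

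\emph{Imprisonment implies endpoint.} Suppose $\gamma(t)\in K$ for all $t>a$, with $K$ compact. If $\sup I\in I$, continuity of $\gamma$ already gives the endpoint, so assume $\sup I\notin I$. We apply Lemma~\ref{lem:unique_monot_limit} with $\alpha\coloneqq \gamma|_{\Sigma}$ and $\Sigma\coloneqq I\cap(a,\infty)$. Condition (i) holds because $\gamma$ is causal.

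For condition (ii), take $s_j\nearrow \sup I$. The sequence $\gamma(s_j)$ lies in $K$ for large $j$, and in a compact space every sequence has a cluster point. In a first-countable space such a cluster point is the limit of a subsequence, so in particular it is an accumulation point in the sense required. The lemma, in its ``in particular'' form with $(a,\sup I)\subset\Sigma$, then gives existence of $\lim_{t\nearrow\sup I}\gamma(t)$. This is exactly a future endpoint.

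The main obstacle is purely topological. Compact subsets of a non-Hausdorff first-countable space need not be sequentially compact in the naive sense. The point to verify is that the notion of ``accumulation point'' used in Lemma~\ref{lem:unique_monot_limit} means a subsequential limit, and that a compact set does supply one. First-countability ensures this: if no point of $K$ were a subsequential limit, then each point of $K$ would have a neighborhood containing only finitely many terms of the sequence. A finite subcover of these neighborhoods would then contain only finitely many terms, contradicting that all terms lie in $K$.

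Finally, observe that the proof never uses the Hausdorff property. The uniqueness of the limit comes entirely from causal simplicity, via Lemma~\ref{lem:unique_monot_limit}.
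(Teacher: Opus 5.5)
Your proof is correct and follows essentially the same route as the paper. The nontrivial direction is exactly the application of Lemma~\ref{lem:unique_monot_limit}, using that a compact set in a first-countable space is sequentially compact; you prove this fact by hand instead of citing it. In the easy direction you check compactness of $\gamma([a,\sup I))\cup\{x\}$ directly by open covers, whereas the paper obtains this set as $\tilde\gamma([a,\sup I])$ for the continuous extension $\tilde\gamma$ from Lemma~\ref{lem:endpoint}. Your version is only a cosmetic difference, but it makes clear that this direction uses no causal or countability assumptions.
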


\begin{proof} If $s\coloneqq \sup I \in I$, then $\gamma$ is future imprisoned and has a future endpoint. We can hence assume that $s \notin I$.

 Suppose that $\gamma$ has a future endpoint. Then $\gamma$ has a proper continuous causal future extension $\tilde \gamma$ by Lemma~\ref{lem:endpoint}. For any $a\in I$, the curve $\gamma$ does not leave the compact set $\tilde \gamma ([a,\sup I])$ in future time, so that $\gamma$ is future imprisoned.

Conversely, suppose that $\gamma$ is future imprisoned in $K$. As $\gamma$ is causal and $K$ is compact (and thus sequentially compact), $\gamma:I\to X$ satisfies assumptions (i) and (ii) of Lemma~\ref{lem:unique_monot_limit}. Therefore, the limit $\lim_{s\nearrow\sup I}\gamma(s)$ exists, which means that $\gamma$ has a future endpoint.

The statement for the past case holds mutatis mutandis.
\end{proof}

Our next characterization of (t-)inextendible curves in the sense of Definition~\ref{dfn:inext_def} will be formulated under the following additional assumption.
 
 \begin{dfn}\label{dfn:extensions}
     We say that a Lorentzian pre-length space $X$ has \emph{causal (timelike) extensions} if for every $x \notin \partial_{ca}^+ X$ ($x \notin \partial_{ch}^+ X$) resp.~$x\notin\partial_{ca}^- X$ ($x \notin \partial_{ch}^- X$) there is a nonconstant causal (timelike) curve starting resp.~ending at $x$.
 \end{dfn}
 Note that a causally resp.~timelike path-connected Lorentzian pre-length space has causal resp.~timelike extensions. For instance, every KS-Lorentzian length space and every BM-length metric spacetime has both timelike and causal extensions. While every BMS-Lorentzian length space is causally path-connected, the next example shows that it does not need to have timelike extensions.

\begin{exl}

Let $X$ be a closed sector in $\Lo^{1+1}$ with its standard Lorentzian distance $d$ determined by a  null ray ending at some $p\in \Lo^{1+1}$ and a timelike  ray starting at $p$; see Figure~\ref{fig:placeholder}. Equip $X$ with the subspace topology and its ``induced Lorentzian distance'' defined as
\[
         d_L(x,y) \coloneqq \sup  \left\{L(\gamma) \mid \gamma : [a,b] \rightarrow X \, \text{causal curve with }\gamma(a)=x, \gamma(b)=y \right\} \leq d(x,y) 
\] if $x$ and $y$ can be connected by a causal curve in $X$ and as $d_L(x,y)=0$ otherwise. This distance coincides with $d$ if the straight segment between causally related $x$ and $y$ lies in $X$. In case it does not, the reverse triangle inequality shows that $d_L$ is realized by the concatenation of the straight segment from $x$ to $p$ and the straight segment from $p$ to $y$; see Figure~\ref{fig:placeholder}. In particular, the supremum in the definition of $d_L$ is attained. 

 Associating a 
causal and timelike relation to $(X,d_L)$ as described at the beginning of Section~\ref{sec:lorentzian_spaces} (in the context of Lorentzian manifolds) turns $X$ into a Lorentzian pre-length space. The space $X$ is geodesic, and the conditions (i)-(iii) in Subsection~\ref{sub:BMS_spaces} can easily be verified to conclude that $X$ is a BMS-Lorentzian length space. However, $X$ does not have a timelike extension starting at any point of the null boundary ray that ends at $p$ (except for $p$). Note that in this example $\partial_{ch} X=\emptyset$.

\begin{figure}
    \centering

\begin{tikzpicture}[scale=0.9]
  \begin{scope}[rotate=45]
    \def\s{1.2}
     
\draw[fill=gray!30, draw=none] (-1/2*\s,1/2*\s) -- (3/2*\s,-3/2*\s) -- (4*\s,\s)  -- (3*\s,2*\s) -- (2*\s,\s) -- (0, \s) -- cycle;
    
 \draw[line width=1pt] (0,\s) --  (2*\s,\s) -- (3*\s,2*\s);
   
    \fill (2*\s,\s) circle (3pt);  
    \node at (1.95*\s,1.35*\s) 
    {$p$};

     \fill (0.3*\s,0.7*\s) circle (3pt);  
    \node at (0.1*\s,0.5*\s)
    {$x$};

    \node at (3/2*\s,-1*\s)
    {$X$};

    \draw[line width=1pt] (0.3*\s,0.7*\s) --  (2*\s,\s);

    \draw[line width=1pt] (3.0*\s,1.6*\s) --  (2*\s,\s);

    \fill (3.0*\s,1.6*\s) circle (3pt);  
    \node at (3.2*\s,1.4*\s) 
    {$y$};
  \end{scope}
\end{tikzpicture}
    \caption{A geodesic between causally related points $x,y\in X$ that cannot be connected by a straight segment in $X$ has to pass $p$.}
    \label{fig:placeholder}
\end{figure}
\end{exl}

Finally, we can state the following characterization of (t-)inextendible curves.

\begin{lem}\label{lem:no_bubbling_(t)-inext}
Let $X$ be a first-countable, causally simple (metric-compatible KS-)Lorentzian pre-length space and $\gamma:I \rightarrow X$  a causal resp.~timelike curve. Then the following statements hold.
\begin{enumerate}
\item If $\sup I \notin I$, then $\gamma:I \rightarrow X$ is future inextendible resp.~t-inextendible if and only if $\gamma$ does not have a future endpoint.
\item If $X$ has causal resp.~timelike extensions and $\sup I \in I$, then $\gamma:I \rightarrow X$ is future inextendible resp.~t-inextendible if and only if $\gamma$ attains a future endpoint in the future causal resp.~chronological boundary of $X$.
\end{enumerate}
In particular, if
$X$ has timelike extensions and no bubbling boundary, then a timelike curve is future inextendible if and only if it is future t-inextendible. Analogous past statements hold.
\end{lem}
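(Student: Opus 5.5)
The plan is to work with the Lemma on Definition~\ref{dfn:inext_def}, so that (in)extendibility is tested by extending $\gamma$ directly on a larger interval (with $\rho$-arclength parametrization in the KS-setting). Lemma~\ref{lem:endpoint} and Remark~(a) on Definition~\ref{dfn:inext_def} then handle the passage between continuous and locally Lipschitz extensions. We treat the future case; the past case is symmetric.

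For (i), assume $s\coloneqq\sup I\notin I$. If $\gamma$ has a future endpoint, Lemma~\ref{lem:endpoint} gives the extension $\tilde\gamma$ on $I\cup\{s\}$. It is causal, and timelike if $\gamma$ is (by the push-up principle), and locally Lipschitz in the KS-case. Since $s$ exceeds every element of $I$, $\gamma$ is future (t-)extendible. Conversely, suppose $\tilde\gamma:\tilde I\to X$ extends $\gamma$ and there is some $t\in\tilde I$ with $t>I$. Then $s<\infty$ and $s\in\tilde I$, so continuity of $\tilde\gamma$ gives $\lim_{u\nearrow s}\gamma(u)=\tilde\gamma(s)$, which is a future endpoint. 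In the KS-case, Remark~(a) lets us use either notion of extension. Note that $\tilde I$ may contain $\pm\infty$ in the continuous setting; the point $t$ still forces $s$ to be a genuine element of $\tilde I$ less than $t$, so the argument goes through.

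For (ii), assume $s=\sup I\in I$, and put $x=\gamma(s)$, which is the attained endpoint. If $x\notin\partial^+_{ca}X$ (resp.\ $\partial^+_{ch}X$), the extension hypothesis gives a nonconstant causal (timelike) curve $\sigma:[0,1]\to X$ with $\sigma(0)=x$. Concatenating $\gamma$ with $\sigma$ shifted to $[s,s+1]$ gives a continuous curve. It is causal by transitivity of $\le$, and timelike by transitivity of $\ll$ together with $\gamma(u)\ll x\ll\sigma(v)$ for $u<s<v$. Nonconstancy is not really needed for extendibility, only the existence of the curve. In the KS-case the concatenation is locally Lipschitz after reparametrizing by arclength. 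Hence $\gamma$ is future extendible.

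Conversely, suppose $\gamma$ is future extendible by $\tilde\gamma$ on $\tilde I\ni t>s$. Then $\tilde\gamma(t)\in J^+(x)\setminus\{x\}$ or $\tilde\gamma(t)\in I^+(x)$ respectively, so $x$ is not in the corresponding boundary. For the causal case we must check $\tilde\gamma(t)\ne x$ for some $t>s$. If $\tilde\gamma$ were equal to $x$ on all of $\tilde I\cap(s,\infty)$, this would be a constant extension, which is still a causal curve. This is the main subtlety, and I expect it to be the real obstacle: one must make sure constant pieces do not count as extensions. The fix I would try first is to read the intended convention so that $J^+(x)=\{x\}$ is exactly what rules out nontrivial extensions. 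Concretely, I would interpret $\partial^+_{ca}$-inextendibility modulo constant tails, or else use causality plus the arclength parametrization in the KS-case, where a constant tail is impossible.

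The final claim follows from (i) and (ii). In case (i), the two notions agree because a timelike curve that has an endpoint extends timelike-ly by Lemma~\ref{lem:endpoint}. In case (ii), they agree because $\partial^+_{ca}X=\partial^+_{ch}X$ when there is no bubbling boundary, and because timelike extensions imply causal extensions at points outside $\partial^+_{ch}X=\partial^+_{ca}X$.
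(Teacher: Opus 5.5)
Your argument follows the paper's proof step by step:
\begin{itemize}
\item (i) comes from Lemma~\ref{lem:endpoint} together with continuity of an extension at $\sup I$;
\item the ``only if'' part of (ii) comes from concatenating with the curve provided by Definition~\ref{dfn:extensions};
\item the ``if'' part comes from noting that an extension puts a point into $J^+(x)\setminus\{x\}$ resp.\ $I^+(x)$;
\item the final claim comes from $\partial^+_{ca}X=\partial^+_{ch}X$.
\end{itemize}
The step you leave open, the ``if'' direction of (ii) for causal curves, is a genuine defect. The paper's proof (``then it is clearly inextendible'') passes over it silently; it is not something you merely failed to handle.

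Read literally, that step is false. Constant curves are causal because $\le$ is reflexive, and neither Definition~\ref{dfn:inext_def} nor its reformulation requires an extension to leave $\gamma(\sup I)$. So every causal curve with $\sup I\in I$ is future extendible by the tail $\tilde\gamma\equiv\gamma(\sup I)$ on $[\sup I,\sup I+1]$. Consequently the ``if'' direction of (ii) in the causal case, and the final ``in particular'' claim, fail whenever $\partial^+_{ca}X\neq\emptyset$. For example, take $X=\{(a,b)\in\Lo^{1+1}\mid a\le 0\}$ with the induced structure, which satisfies all hypotheses. There the curve $s\mapsto(s,0)$, $s\in[-1,0]$, is future t-inextendible but future extendible as a causal curve.

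No argument can close this step. It has to be settled by the convention the paper tacitly uses: an extension of a curve with $\sup I\in I$ must not be constant on $\tilde I\cap[\sup I,\infty)$. Compare the remark after Definition~\ref{dfn:strong_cauchy_time_function}, which only makes sense if a constant curve on a degenerate interval can be inextendible. Under this convention your argument closes immediately: some $t>s$ has $\tilde\gamma(t)\neq x$ and $x\le\tilde\gamma(t)$, hence $J^+(x)\neq\{x\}$. The timelike case never needed this, since any timelike extension puts a point into $I^+(x)$.

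Of your two suggested remedies, the first (discarding constant tails) is therefore the correct one. The second does not work as stated:
\begin{itemize}
\item Causality is irrelevant, since $x\le x$ always holds.
\item The arclength parametrization of $\gamma$ places no restriction on $\tilde\gamma$, which the Lemma on Definition does not require to be arclength parametrized; a constant tail is locally Lipschitz.
\item Requiring $\tilde\gamma$ itself to be arclength parametrized would exclude constant tails in the KS-setting. But that is the same convention in disguise, and it is unavailable in the metric-free setting.
\end{itemize}
Once the convention is adopted, you must also drop your remark that nonconstancy of $\sigma$ is not needed. Nonconstancy is exactly what makes the concatenation an admissible extension. Under the literal reading, by contrast, the ``only if'' direction of (ii) is vacuous in the causal case while the ``if'' direction is false, so the two directions together force the convention.

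A minor slip in (i): in the continuous reformulation with $\tilde I\subset[-\infty,\infty]$ you only get $s\le t$, and $s=+\infty$ is possible. This is harmless, since $s\in\tilde I$ and continuity of $\tilde\gamma$ at $s$ are all you use.
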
 

\begin{proof} We prove the characterization of future inextendible causal curves. The characterization of future t-inextendible timelike curves and the respective past statements follow analogously. By the metric-compatibility assumption it suffices to prove the claim in the metric-free setting, the claim in the KS-setting follows then as well as discussed in Remarks on Definition \ref{dfn:inext_def}, (a) and (b). In this case (i) follows from Lemma~\ref{lem:endpoint}. If $\gamma$ attains a future endpoint in the causal boundary, then it is clearly inextendible. Conversely, if $\gamma$ is future inextendible and attains a future endpoint, then, assuming the existence of causal extensions, this endpoint has to be contained in the causal boundary, as otherwise $\gamma$ could be extended.\end{proof}

Now we provide sufficient conditions for the existence of t-inextendible timelike curves.

\begin{lem}\label{lem:inex_extens}
Suppose that $X$ is either a Lorentzian pre-length space, which has a finite, continuous Lorentzian distance $d$, or a KS-Lorentzian pre-length space. Then every timelike curve $\gamma: I \rightarrow X$ is, up to reparametrization, the restriction of a t-inextendible timelike curve. 
\end{lem}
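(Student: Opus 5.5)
We prove the claim with a Zorn's lemma argument on timelike extensions of $\gamma$. Let $\mathcal{E}$ be the set of pairs $(J,\eta)$, where $J\subset\R$ is an interval and $\eta:J\to X$ is a timelike curve that extends a reparametrization of $\gamma$. To remove the parametrization freedom, we normalize once and for all. We fix $t_0$ in the interior of $I$, or $t_0\in I$ if $I$ is degenerate; in that case $\gamma$ is a single point, which is trivially t-extendible or t-inextendible, so we assume $I$ is nondegenerate.

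\textbf{Normalization.} In the setting with a finite, continuous $d$, we reparametrize all timelike curves by a bounded strictly increasing quantity. A natural choice is the following. Using the timelike relation, $s\mapsto d(\eta(s_0),\eta(s))$ is strictly increasing on $s>s_0$ by the reverse triangle inequality. A cleaner option is to compare curves via an order-preserving homeomorphism onto subintervals of $\R$. We then define the order $(J,\eta)\preceq(J',\eta')$ if there is an order-preserving homeomorphism $\varphi:J\to\varphi(J)\subset J'$ onto a subinterval with $\eta'\circ\varphi=\eta$. In the KS-setting we instead parametrize by $\rho$-arclength with basepoint $t_0$. Then extensions literally restrict, and $\preceq$ is simply inclusion of graphs.

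\textbf{Chains.} Given a chain, we take the direct limit: the union of the intervals, glued along the homeomorphisms, is a totally ordered connected set without gaps. It is order-isomorphic to an interval of $\R$ because it is a countable union of intervals, and we can choose a cofinal sequence since intervals in $\R$ have countable cofinality in each direction. The glued curve is timelike, since any two parameter values lie in a common member of the chain. It is continuous, since continuity is local. Hence every chain has an upper bound. In the arclength normalization this step is immediate: the union of graphs is again a locally Lipschitz timelike curve on the union interval.

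\textbf{Maximal elements.} Zorn's lemma yields a maximal $(J,\eta)$. A maximal element is t-inextendible by Definition~\ref{dfn:inext_def}: any future or past timelike extension would be strictly larger in $\preceq$.

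\textbf{Main obstacle.} The delicate point is making $\mathcal{E}$ a set and the direct limit a genuine real interval without the arclength normalization. I would handle this by mapping each parameter interval into $\R$ via an explicit coordinate. In the finite continuous $d$ case, we fix an increasing homeomorphism of the parameter built from $s\mapsto \pm d(\eta(\min(s,t_0)),\eta(\max(s,t_0)))$, which is continuous by continuity of $d$ and strictly monotone because of timelikeness and the reverse triangle inequality. This uses $d(x,y)+d(y,z)\le d(x,z)$ with $d(x,y)>0$, and finiteness of $d$ ensures the values are genuine reals. With this canonical parametrization, $\preceq$ again becomes restriction of graphs on intervals of $\R$, which makes the chain union trivial.

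One caveat arises when the $d$-values are bounded, for example as the future parameter approaches a finite limit. In that case we must not identify extensions past that limit, but the canonical parameter still increases strictly there, so no collision occurs. I expect verifying that this canonical parameter is a homeomorphism, in particular that it is continuous and has continuous inverse on the interval, to be the main technical step.
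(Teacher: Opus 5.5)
Your final argument is correct and is essentially the paper's proof. You apply Zorn's lemma to timelike extensions of $\gamma$ normalized by the signed Lorentzian distance to $\gamma(t_0)$ (resp.\ by $\rho$-arclength in the KS-setting). The paper does the same one time-direction at a time: it uses curves issuing from an attained endpoint $x_0$, normalized by $d(x_0,\hat\gamma(t))=t-s$, and then concatenates.

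A few points to tidy up:
\begin{itemize}
\item Drop the abstract direct-limit detour. Its ``countable union'' justification is not valid, since a chain glued along arbitrary homeomorphisms can have uncountable cofinality, as for the long ray.
\item The step you flag as the main obstacle is immediate. A continuous, strictly increasing map on an interval is a homeomorphism onto an interval.
\item The degenerate case cannot be dismissed as you do: a t-extendible point still needs an inextendible extension, and this case is exactly what Corollary~\ref{cor:inex_timelike} uses. It is, however, covered verbatim by your normalized argument.
\item In the KS-setting, ``maximal implies t-inextendible'' relies on the Lemma on Definition~\ref{dfn:inext_def}. The paper makes this explicit for the case $\sup I_*=\infty$.
\end{itemize}
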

\begin{proof}
For any $t_0\in I$ it suffices to show that the restriction of $\gamma$ to $[t_0,\infty) \cap I$ is, up to reparametrization, the restriction of a future t-inextendible timelike curve. The general statement then follows analogously. After reparametrization (by arclength in the KS-setting), we can assume that $s\coloneqq \sup I < \infty$, that $ \sup I \in I$ (so that $\gamma$ attains a future endpoint $x_0 \in X$), and that $\gamma$ is future t-extendible.

We now first work in the continuous setting and deal with the KS-setting afterwards. We consider the (non-empty) set of timelike curves $\hat \gamma : \hat I \rightarrow X$ satisfying $s = \min \hat I \in \R$, $\hat \gamma(s)=x_0$, and $d(x_0,\hat \gamma(t))=t-s$ for all $t \in \hat I$ with its natural order. Note that, since $d$ is finite and continuous, every timelike curve starting at $x_0$ can be brought to this form after a suitable reparametrization. By Zorn's lemma there is a maximal such extension $\gamma_*$, which is future t-inextendible by maximality. The concatenation of $\gamma$ and $\gamma_*$ is a future t-inextendible extension of $\gamma$. 

In a KS-Lorentzian pre-length space, we consider the (non-empty) set of timelike curves $\hat \gamma : \hat I \rightarrow X$ parametrized by $\rho$-arclength with  $s = \min \hat I \in \R$ and $\hat \gamma(s)=x_0$ with its natural order. Recall that every timelike curve starting at $x_0$ can be parametrized by $\rho$-arclength; see, for instance, \cite[Proposition~2.5.9]{BBI01}. Again by Zorn's lemma there is a maximal such curve $\gamma_*:I_* \rightarrow X$. If $\sup I_* < \infty$, then $\gamma_*$ is future $t$-inextendible by maximality. If $\sup I_*=\infty$, then $\gamma_*$ is future (t-)inextendible by the Lemma on Definition~\ref{dfn:inext_def}.
\end{proof}

\addtocounter{thm}{-1}
\begin{remslem}\label{rem:rem_on_inext}
(a) The same argument shows that every causal curve in a KS-Lorentzian pre-length space $X$ is, up to reparametrization, the restriction of an inextendible causal curve.
In particular, through every point of a Lorentzian pre-length space there runs an inextendible causal curve. If the assumptions of Lemma~\ref{lem:no_bubbling_(t)-inext} are satisfied and the causal boundary of $X$ is empty, then these inextendible curves are defined on open intervals; cf.~\cite[Proposition 2.22]{BG25}.

(b) If the assumptions of Lemma~\ref{lem:no_bubbling_(t)-inext} are satisfied, the causal resp.~chronological boundary of $X$ is empty, and $X$ is in addition proper (e.g.~a \emph{proper} globally hyperbolic KS-Lorentzian length space), then the proof of \cite[Lemma 3.12]{KS18} together with Lemma~\ref{lem:no_bubbling_(t)-inext} shows that the case $\sup I_* < \infty$ in the proof of the lemma above cannot occur. Hence, in this case we see that through every point there exists an inextendible causal resp.~timelike curve defined on an open interval and with infinite length. In Corollary~\ref{cor:inex_timelike} below, we establish the existence of inextendible timelike curves without assuming properness of the metric.
\end{remslem}

In Lorentzian metric spaces inextendible causal curves can be constructed via suitable time functions; see \cite[Proposition~20]{Mi26}. The following corollary, in particular, provides sufficient conditions that yield inextendible timelike curves in a more direct way in our setting of interest.

\begin{cor}\label{cor:inex_timelike} Suppose that $X$ is a first-countable, causally simple Lorentzian pre-length space with empty bubbling boundary, which either is a metric-compatible KS-Lorentzian pre-length space or has a finite, continuous Lorentzian distance. If $X$ has timelike extensions, then every timelike curve $\gamma:I \rightarrow X$ is, up to reparametrization, the restriction of an inextendible timelike curve. In particular, if $X$ has timelike extensions, then for every point $x_0\in X$ there exists an inextendible timelike curve that runs through $x_0$. The converse of the latter statement holds if $X$ has causal extensions.
\end{cor}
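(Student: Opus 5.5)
The plan is to combine Lemma~\ref{lem:inex_extens} with the endpoint characterization in Lemma~\ref{lem:no_bubbling_(t)-inext}. Let $\gamma:I\to X$ be timelike. By Lemma~\ref{lem:inex_extens}, which applies in both settings allowed here (metric-compatible KS, or finite continuous $d$), $\gamma$ is, up to reparametrization, the restriction of a t-inextendible timelike curve $\tilde\gamma:\tilde I\to X$. So the main task is to show that a t-inextendible timelike curve is inextendible as a causal curve. This is exactly the final assertion of Lemma~\ref{lem:no_bubbling_(t)-inext}, applied with the hypotheses first-countable, causally simple, timelike extensions and no bubbling boundary.

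I would still spell out why it applies. Take the future end; the past end is symmetric.
\begin{itemize}
\item If $\sup\tilde I\notin\tilde I$: by part (i) of Lemma~\ref{lem:no_bubbling_(t)-inext}, future t-inextendibility and future inextendibility are both equivalent to the absence of a future endpoint, so they coincide.
\item If $\sup\tilde I\in\tilde I$: by part (ii), t-inextendibility forces the endpoint to lie in $\partial^+_{ch}X$. Since there is no bubbling boundary, $\partial^+_{ch}X=\partial^+_{ca}X$, so $J^+$ of the endpoint is trivial. Hence no nonconstant causal extension exists, and $\tilde\gamma$ is future inextendible.
\end{itemize}
Here part (ii) is used only in the direction "t-inextendible implies endpoint in the chronological boundary", which needs only timelike extensions. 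The converse direction is immediate because the endpoint has trivial causal future. So causal extensions are not needed for this implication.

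For the "in particular" statement, pick any $x_0$. Suppose $x_0\notin\partial^+_{ch}X$ or $x_0\notin\partial^-_{ch}X$; then timelike extensions give a nonconstant timelike curve through $x_0$, and the first part extends it to an inextendible timelike curve. If $x_0$ lies in both chronological boundaries, i.e.\ in the spacelike boundary, take the constant curve $\{x_0\}$ on the degenerate interval $[0,0]$. This curve is vacuously timelike. It is inextendible because $J^\pm(x_0)=\{x_0\}$ by the absence of bubbling boundary, so any extension would have to be constant, which is impossible for a timelike curve on a nondegenerate interval since $\ll$ is irreflexive. I expect this degenerate case to be the main subtle point, and I would check that the conventions on intervals in Definition~\ref{dfn:inext_def} permit it.

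For the converse, assume causal extensions and that through every point runs an inextendible timelike curve. Let $x\notin\partial^+_{ch}X$; we need a nonconstant timelike curve starting at $x$. Take an inextendible timelike curve $\sigma$ through $x=\sigma(t_0)$. If $\sigma$ continues beyond $t_0$, its restriction to times $\ge t_0$ is the required curve. Otherwise $t_0=\sup I$ is attained. Since the space has causal extensions, Lemma~\ref{lem:no_bubbling_(t)-inext}(ii) shows that $x\in\partial^+_{ca}X\subset\partial^+_{ch}X$, a contradiction. The past case is analogous, which gives timelike extensions.
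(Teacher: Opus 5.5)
Your proposal is correct and follows the paper's proof: Lemma~\ref{lem:inex_extens} followed by Lemma~\ref{lem:no_bubbling_(t)-inext}. You add the useful remark that only the timelike-extension direction of part (ii) is needed, since an attained endpoint in $\partial^+_{ca}X$ blocks extension without further hypotheses. For the converse you use the same observation as the paper: with causal extensions, an inextendible timelike curve cannot end at a point outside $\partial^+X$. The only deviation is your case split for the ``in particular'' claim; the paper avoids it by applying the first part directly to the constant curve at $x_0$ on a degenerate interval, so the spacelike-boundary case is absorbed into the Zorn argument of Lemma~\ref{lem:inex_extens}.
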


Recall that if not only the bubbling boundary, but also the chronological boundary of $X$ is empty, then the inextendible timelike curves provided by the corollary are defined on open intervals by Lemma~\ref{lem:no_bubbling_(t)-inext}.

\begin{proof} 
First, assume that $X$ has timelike extensions. After reparametrization the timelike curve $\gamma$ can be extended to a t-inextendible timelike curve $\gamma_*$ by Lemma~\ref{lem:inex_extens}. Since the bubbling boundary is empty, the future causal and chronological boundary coincide. Therefore, the curve $\gamma_*$ is also inextendible by Lemma~\ref{lem:no_bubbling_(t)-inext}. In particular, for $x_0 \in X$ we can extend the constant curve $\gamma$ at $x_0$ to an inextendible timelike curve through $x_0$ in this way. 

Conversely, assume that $X$ has causal extensions and that $\gamma:I\to X$ is an inextendible timelike curve running through a given $x_0\in X$ with $0\in I$ and $\gamma(0)=x_0$. If $x_0\notin \partial^+ X$ ($x_0\notin \partial^- X$), then there is a nonconstant causal curve starting (ending) at $x_0$, in which case the inextendible curve $\gamma$ cannot end (start) at $x_0$.  
Hence, restricting $\gamma$ to $I\cap[0,\infty)$ (to $I\cap(-\infty,0]$) yields the desired timelike extension.
\end{proof}

The example $X=\{(0,0),(0,1)\} \subset \Lo^{1+1}$ with the restricted structures shows that assuming the existence of causal extensions is necessary for the converse in the lemma.

As a consequence, we observe that through every point of a globally hyperbolic KS-Lorentzian length space there runs an inextendible timelike curve defined on an open interval. In particular, these inextendible timelike curves are also inextendible in the sense of \cite{BG25}; cf.~\cite[Corollary 2.23]{BG25}.

\section{Completeness conditions }\label{sec:completeness}
Based on the previous section, we can generalize statements on completeness properties of spacetimes due to Beem \cite{Be76} and Takahashi \cite{Ta25} and prove further 
variants thereof. We first introduce the 
basic definitions that are needed in the Theorems~\ref{thm:metric_properties} and~\ref{thm:complete}. The following definition goes back to Busemann \cite{Bu67}.

\begin{dfn}\label{dfn:finitely_compact}
A Lorentzian pre-length space $X$ is called \emph{finitely compact} if for every $B>0$, $x,y\in X$, and sequence of points $(x_j)_{j\in \N}$ in $X$ with either $x \ll y \leq x_j$ and $d(x,x_j) \leq B$ for all  $j\in \N$ or $x_j \leq y \ll x$ and $d(x_j,x) \leq B$ for all $j\in \N$, there is an accumulation point of $(x_j)_{j\in \N}$ in $X$.
\end{dfn}

For instance, Minkowski space $\Lo^{n+1}$ and convex conical Minkowski spacetimes are easily seen to be finitely compact. For an example due to Geroch of a finitely compact, globally hyperbolic Lorentzian manifold that is not timelike geodesically complete, we refer to \cite{Ge68}; cf.~\cite[Remark~6.13]{BEE96}.

If $X$ is second-countable, then $X$ is finitely compact if and only if for all $x \ll y$ in $X$ and all $r>0$ the sets  $
\{ z \in J^+(y) \mid d(x,z) \leq r\}
$ and $
\{ z \in J^-(x) \mid d(z,y) \leq r\}
$ are compact; cf.~\cite[Lemma 3.2]{Ta25}. However, unlike in \cite{Ta25}, this characterization is not used in the following.

\begin{dfn}\label{dfn:cauchy_complete}
A Lorentzian pre-length space $X$ is called \emph{timelike Cauchy complete} if for any sequence $(x_j)_{j\in \N}$ in $X$ such that either  

(i) $x_j \ll x_{j+1}$ for all $j\in \N$ and $\sup_{k}d(x_j,x_{j+k}) \to 0$ as $j\to \infty$ or

(ii) $x_{j+1} \ll x_j$ for all $j\in \N$ and $\sup_kd(x_{j+k},x_j)   \to 0$ as $j\to \infty$,

\noindent
the sequence $(x_j)_{j\in \N}$ converges in $X$. 
\end{dfn}

Beem \cite[Lemma~3]{Be76} proved that finitely compact, globally hyperbolic Lorentzian manifolds are timelike Cauchy complete, which was extended to globally hyperbolic KS-Lorentzian length spaces by Takahashi in \cite[Theorem~3.4]{Ta25}. Our next proposition provides a further generalization with a streamlined argument.  

\begin{prp}\label{prp:finitely_compact_weakly_complete} 
Let $X$ be a first-countable, causally simple Lorentzian pre-length space.
If $X$ is finitely compact, then it is timelike Cauchy complete.
\end{prp}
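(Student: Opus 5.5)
We treat case (i) of Definition~\ref{dfn:cauchy_complete}, where $x_j \ll x_{j+1}$ and $\sup_k d(x_j,x_{j+k})\to 0$; case (ii) is the time-dual. The plan is to verify the hypotheses of Lemma~\ref{lem:unique_monot_limit} for $\Sigma=\N$ and $\alpha(j)=x_j$. Condition (i) there holds because $\ll\subset\leq$ and $\leq$ is transitive, so $x_j\leq x_k$ for $j\leq k$. Once condition (ii) is established, the lemma says every subsequence has the same limit. This gives convergence of the whole sequence: if $(x_j)$ did not converge to the common limit $x^*$, there would be a neighbourhood $U$ of $x^*$ and a subsequence outside $U$. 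That subsequence still has an accumulation point, which must be a limit of a further subsequence by first-countability, and this limit must equal $x^*$, a contradiction.

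The main step is producing accumulation points from finite compactness. We need a base point $x\ll y$ with $y\leq x_j$ for all large $j$ and a uniform bound $d(x,x_j)\leq B$. We take $y=x_1$ and $x=x_0$, shifting indices so the sequence starts at $x_1$ and $x_0\ll x_1$ is an earlier term. Then $x_0\ll x_1\leq x_j$ for all $j\geq 1$.

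For the bound, the naive estimate goes the wrong way: the reverse triangle inequality gives only $d(x_0,x_j)\geq d(x_0,x_1)+d(x_1,x_j)$, so it cannot bound $d(x_0,x_j)$ from above. Instead we fix $N$ with $\sup_k d(x_N,x_{N+k})\leq 1$. We then choose the base point at this stage: $x=x_N$ and $y=x_{N+1}$, applied to the tail $(x_j)_{j\geq N+1}$. Then $x_N\ll x_{N+1}\leq x_j$ and $d(x_N,x_j)\leq 1=:B$ for all $j\geq N+1$. Finite compactness now gives an accumulation point for every subsequence of the tail, since any subsequence satisfies the same hypotheses. In particular, every sequence $(x_{s_j})$ with $s_j\nearrow\infty$ has an accumulation point, which is condition (ii) of Lemma~\ref{lem:unique_monot_limit} for $\Sigma=\{j\geq N+1\}$.

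So the apparent obstacle, namely an upper bound on the time separation from a fixed point, disappears once we pick the base point inside the tail using the Cauchy condition rather than at the start. The rest is Lemma~\ref{lem:unique_monot_limit}, whose uses of causal simplicity and causality are already built in, together with the subsequence argument above.

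For case (ii) we use the past clause of finite compactness. Choose $N$ with $\sup_k d(x_{N+k},x_N)\leq 1$ and set $x=x_N$, $y=x_{N+1}$. For $j\geq N+1$ this gives $x_j\leq x_{N+1}\ll x_N$ and $d(x_j,x_N)\leq 1$. Applying the infimum version of Lemma~\ref{lem:unique_monot_limit} to $\alpha(t)=x_{-t}$, which is causal in $t$, then yields convergence in the same way.
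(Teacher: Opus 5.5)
Your proof is correct and takes essentially the same route as the paper. You choose the base points $x=x_N$, $y=x_{N+1}$ inside the tail where $\sup_k d(x_N,x_{N+k})$ is finite, use finite compactness to verify condition (ii) of Lemma~\ref{lem:unique_monot_limit}, and conclude convergence; your explicit first-countability argument for passing from ``all subsequential limits agree'' to convergence of the whole sequence fills in a step the paper leaves implicit.
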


\begin{proof} Let $(x_j)_{j\in \N}$ be a sequence in $X$ as in Definition~\ref{dfn:cauchy_complete} with $x_j \ll x_{j+1}$ and $j_0\in \N$ large enough such that $\sup_{k}d(x_j,x_{j+k})$ is bounded for all $j\geq j_0$. The other case is analogous. Set $x=x_{j_0}$, $y=x_{j_0+1}$, and $B=\sup_k d(x_{j_0},x_{j_0+k})$. By finite compactness as stated in Definition~\ref{dfn:finitely_compact}, the sequence $(x_{j_0+j})_{j\in \N}$ satisfies condition (ii) in Lemma~\ref{lem:unique_monot_limit}. Since $(x_{j_0+j})_{j\in \N}$ is causal, also condition (i) is met. We can therefore conclude that $(x_{j})_{j\in \N}$ converges. 
\end{proof}

Takahashi \cite[Theorem 3.8]{Ta25} showed that a timelike Cauchy complete, globally hyperbolic KS-Lorentzian length space satisfies the following condition introduced by Beem in the setting of Lorentzian manifolds; see \cite{Be76}. Here a \emph{geodesic} 
is defined as a causal curve that is locally maximal; see \cite[Definition 3.6]{Ta25}.

\begin{dfn}
A Lorentzian pre-length space is said to satisfy \emph{Condition A} if for any $x,y \in X$ with $x \ll y$  resp.~$y \ll x$ and any future resp.~past inextendible causal geodesic $\gamma:I \rightarrow X$ with $0\in I$ and $\gamma(0)=y$, it holds that $d(x,\gamma(t)) \rightarrow \infty$ as $t \nearrow \sup I$ or $\sup I \in I$ resp.~$d(\gamma(t),x) \rightarrow \infty$ as $t\searrow \inf I$ or $\inf I\in I$.
\end{dfn}

Note that an inextendible causal geodesic is in particular inextendible as a geodesic and that in a Lorentzian manifold also the converse holds. Indeed, in this case a geodesic is up to reparametrization the same as a solution of the geodesic equation (see e.g.~\cite[Proposition~5.34]{On83}), so a geodesic with an endpoint is always extendible as a geodesic. Also note that the case $\sup I \in I$ resp.~$\inf I \in I$ does not occur in a KS-Lorentzian length space 
since its chronological boundary is empty by \cite[Definition 3.16, (ii)]{KS18}.

For us, the following completeness conditions turn out to
be more useful than Condition~A. 

\begin{dfn}
We call a future inextendible timelike curve $\gamma$ in a Lorentzian pre-length space $X$ defined on an interval $I\subset \R$ \emph{future Cauchy complete} provided the following condition is satisfied: if there is some $t_0 \in I$ for which $d(\gamma(t_0),\gamma(\cdot))_{|[t_0,\infty) \cap I}$ is bounded, then $\sup I$ is contained in $I$. An analogous definition is made for past inextendible timelike curves. We call $\gamma$ \emph{Cauchy complete} if it is future and past Cauchy complete. We say that $X$ satisfies \emph{Condition B} if every future inextendible timelike curve is future Cauchy complete and every past inextendible timelike curve is past Cauchy complete.
\end{dfn}

In what follows a (future resp.~past) Cauchy complete timelike curve is by definition (future resp. past) inextendible. 

Note that a future inextendible causal curve $\gamma: [0,a) \rightarrow X$ of infinite Lorentzian length is future Cauchy complete. However, infinite Lorentzian length is not necessary: If $x(t)$ is a unit speed curve in $\HH^n$ and $r(t) = \sqrt{\sinh(2(t+1))}$, then $\gamma: [0,\infty) \rightarrow \Lo^{n+1}$, defined by $\gamma(t)=r(t)x(t)$, is of finite length, future inextendible, and future Cauchy complete. 

In fact, the Minkowski space $\Lo^{n+1}$ is timelike Cauchy complete, and we now show that timelike Cauchy completeness implies Condition B.

\begin{prp}\label{prp:tcc_propB} Let $X$ be a first-countable, causally simple (metric-compatible KS-)Lorentzian pre-length space. If $X$ is timelike Cauchy complete, then it satisfies Condition B.
\end{prp}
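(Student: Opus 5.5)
We prove the future statement; the past statement follows by the symmetric argument, using (ii) instead of (i) in Definition~\ref{dfn:cauchy_complete}. Let $\gamma:I\rightarrow X$ be a future inextendible timelike curve, parametrized by $\rho$-arclength in the KS-setting. Suppose that for some $t_0\in I$ the function $t\mapsto d(\gamma(t_0),\gamma(t))$ on $[t_0,\infty)\cap I$ is bounded. We must show that $s^*\coloneqq\sup I\in I$, so assume instead that $s^*\notin I$. By Lemma~\ref{lem:no_bubbling_(t)-inext}, (i), future inextendibility then means that $\gamma$ has no future endpoint. Our aim is to contradict this by producing the limit $\lim_{t\nearrow s^*}\gamma(t)$.

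\textbf{Step 1: the distance along $\gamma$ is Cauchy.} Set $f(t)\coloneqq d(\gamma(t_0),\gamma(t))$ for $t\geq t_0$. Since $\gamma$ is timelike, $\gamma(t_0)\ll\gamma(t)\ll\gamma(t')$ for $t_0<t<t'$, and the reverse triangle inequality gives
\[
f(t)+d(\gamma(t),\gamma(t'))\leq f(t').
\]
Hence $f$ is nondecreasing and bounded, say by $B<\infty$, so $L\coloneqq\lim_{t\nearrow s^*}f(t)\leq B$ exists. The same inequality gives
\[
d(\gamma(t),\gamma(t'))\leq f(t')-f(t)\leq L-f(t)\qquad\text{for all } t_0<t<t'<s^*,
\]
and the right-hand side tends to $0$ as $t\nearrow s^*$. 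Here the finiteness of $B$ is essential, since otherwise the subtraction would be meaningless.

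\textbf{Step 2: every sequence along $\gamma$ converges.} Take any sequence $s_j\nearrow s^*$ with $s_j>t_0$ strictly increasing, and put $x_j\coloneqq\gamma(s_j)$. Then $x_j\ll x_{j+1}$ because $\gamma$ is timelike, and $\sup_k d(x_j,x_{j+k})\leq L-f(s_j)\rightarrow 0$. So $(x_j)$ satisfies Definition~\ref{dfn:cauchy_complete}, (i), and timelike Cauchy completeness gives that $(x_j)$ converges. In particular, every such sequence has an accumulation point.

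\textbf{Step 3: the limit along $\gamma$ exists.} Step~2 verifies condition (ii) of Lemma~\ref{lem:unique_monot_limit} for $\alpha=\gamma_{|(t_0,s^*)}$, and condition (i) holds because $\gamma$ is causal. The lemma therefore yields that $\lim_{s\nearrow s^*}\gamma(s)$ exists, i.e.\ $\gamma$ has a future endpoint. This contradicts the conclusion of the first paragraph, so $s^*\in I$ and $\gamma$ is future Cauchy complete.

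In the KS-setting one must also make sure that the inextendibility notion being used is the continuous one. This is exactly what Lemma~\ref{lem:no_bubbling_(t)-inext} provides under metric-compatibility, together with Remarks on Definition~\ref{dfn:inext_def}, (a). Once it is in place, Lemma~\ref{lem:endpoint} produces the extension, and in particular forces $\sup I<\infty$.
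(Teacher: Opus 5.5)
Your proof is correct and follows the paper's argument essentially step by step. Monotonicity and boundedness of $t\mapsto d(\gamma(t_0),\gamma(t))$, together with the reverse triangle inequality, turn $(\gamma(t_j))$ into a sequence as in Definition~\ref{dfn:cauchy_complete}, (i); timelike Cauchy completeness then gives convergence, and Lemma~\ref{lem:unique_monot_limit} upgrades this to a future endpoint, which contradicts inextendibility. The only cosmetic difference is that you cite Lemma~\ref{lem:no_bubbling_(t)-inext}, (i), where the paper cites Lemma~\ref{lem:endpoint} directly; this is the same fact.
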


\begin{proof} Let $\gamma: I \rightarrow X$ be a future inextendible timelike curve. We prove that $\gamma$ is future Cauchy complete. The proof of the corresponding past statement is analogous. We can assume that $s\coloneqq\sup I \notin I$, since otherwise $\gamma$ is future Cauchy complete. 
We can further assume that there is some $t_0 \in I$ for which $d(t)\coloneqq d(\gamma(t_0),\gamma(t))$, $t \in [t_0,s)$, is bounded. By Lemma~\ref{lem:endpoint} the claim follows by contradiction, provided that we can show that $\gamma$ has a future endpoint. To prove the existence of the limit $x=\lim_{t\nearrow s } \gamma(t)$, we observe that for any sequence $(t_j)_{j\in \N}$ with $t_j \nearrow s$ as $j\to\infty$ the sequence defined by $x_j=\gamma(t_j)$, $j\in \N$, satisfies $x_j\ll x_{j+1}$ as $\gamma$ is timelike. Observe further that by the reverse triangle inequality $$0< d(x_j,x_{j+k}) \leq d(t_{j+k})-d(t_j)\to 0$$ as $j\to\infty$, uniformly in $k\in \N$, where we used that $(d(t_j))_{j\in \N}$ is a Cauchy sequence by monotonicity and boundedness of $d(t)$. Therefore, the existence of the limit $x=\lim_{j\to \infty} \gamma(t_j)$ is guaranteed by Definition~\ref{dfn:cauchy_complete}. Since $\gamma$ is timelike, it meets the conditions of Lemma~\ref{lem:unique_monot_limit}, so we conclude $x=\lim_{t \nearrow s} \gamma(t)$. 
 Hence, the claim follows. 
\end{proof}

Now we introduce the following strengthening of Condition A.

\begin{dfn}
A Lorentzian pre-length space satisfies \emph{Condition A$^*$} if it satisfies Condition A not only for all future inextendible causal geodesics and all past inextendible causal geodesics, but for all future inextendible and all past inextendible causal curves.
\end{dfn}

We can say more about the relation between the other previously considered conditions and Condition A$^*$. Recall that KS-Lorentzian length spaces are by definition timelike path-connected and as a metric space first-countable.

\begin{prp}\label{prp:tcc_propAB} Let $X$ either be a globally hyperbolic KS-Lorentzian length space, a separable, timelike path-connected BMS-Lorentzian metric space without chronological boundary, or a separable, timelike path-connected, globally hyperbolic BM-length metric spacetime. If $X$ satisfies Condition B, then it also satisfies Condition A$^*$ and hence Condition A.
\end{prp}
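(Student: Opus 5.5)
We argue by contradiction and only treat the future case; the past case is symmetric. Let $x\ll y$ and let $\gamma:I\to X$ be a future inextendible causal curve with $\gamma(0)=y$. In all three settings the chronological boundary is empty, hence so is the causal boundary. Moreover $X$ has timelike (hence also causal) extensions. By Lemma~\ref{lem:no_bubbling_(t)-inext}, $\sup I\notin I$ and $\gamma$ has no future endpoint. By the reverse triangle inequality, $t\mapsto d(x,\gamma(t))$ is nondecreasing on $I\cap[0,\infty)$. So if Condition A$^*$ fails, there is $B>0$ with $d(x,\gamma(t))\le B$ for all $t\ge 0$. The strategy is to replace $\gamma$ by a future inextendible \emph{timelike} curve $\sigma$ lying causally below $\gamma$, so that $d(x,\cdot)$ stays bounded along $\sigma$. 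Condition B will then force $\sigma$ to attain a future endpoint, which is impossible without chronological boundary.

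\textbf{Construction of $\sigma$.} In each setting the topology is metrizable: by $\rho$ in the KS case, and by \cite[Theorem~7.2]{BMS25} together with the comparison in Subsection~\ref{sub:BM_spaces} in the separable BMS and BM cases. Fix such a metric $\rho$, and fix $t_j\nearrow\sup I$ with $t_0=0$; set $y_j\coloneqq\gamma(t_j)$. Each point is the future endpoint of a nonconstant timelike curve, since there is no past chronological boundary and we have timelike extensions. Hence every $y_j$ is a limit of points of $I^-(y_j)$. We now choose inductively
\[
z_0\in I^+(x)\cap I^-(y_0),\qquad z_j\in I^+(z_{j-1})\cap I^-(y_j)\cap B_\rho(y_j,1/j).
\]
This is possible because $z_{j-1}\ll y_{j-1}\le y_j$, so by push-up $y_j$ lies in the open set $I^+(z_{j-1})$. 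Timelike path-connectedness then yields timelike curves from $z_{j-1}$ to $z_j$, and we concatenate them to a timelike curve $\sigma$ on $[0,\infty)$ with $\sigma(j)=z_j$. Every point on the $j$-th segment satisfies $\sigma(s)\le z_j\le y_j$, whence $d(x,\sigma(s))\le d(x,y_j)\le B$. Since $x\ll z_0$, the reverse triangle inequality gives $d(z_0,\sigma(s))\le B$ as well.

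\textbf{Inextendibility of $\sigma$ and conclusion.} Suppose $\sigma$ had a future endpoint $p$. Then $z_j\to p$, and since $\rho(z_j,y_j)<1/j$ also $y_j\to p$. Lemma~\ref{lem:unique_monot_limit}, applied to the causal map $\gamma$ on the interval $[0,\sup I)$, then shows that $\gamma$ has a future endpoint, a contradiction. Hence $\sigma$ is future inextendible by Lemma~\ref{lem:no_bubbling_(t)-inext}(i). After extending $\sigma$ to the past via Corollary~\ref{cor:inex_timelike} if needed, Condition B applies with $t_0=0$, because $d(\sigma(0),\sigma(\cdot))$ is bounded. It yields that $\sigma$ attains its future endpoint, which by Lemma~\ref{lem:no_bubbling_(t)-inext}(ii) lies in the empty future chronological boundary. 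This contradiction proves Condition A$^*$, and Condition A follows trivially since geodesics are causal curves.

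The step I expect to be most delicate is verifying the topological inputs uniformly across the three settings. We need first countability and causal simplicity, which are required for Lemma~\ref{lem:unique_monot_limit}; metrizability, so that $y_j$ inherits convergence from $z_j$; and the fact that points are approximable from $I^-$. The BMS case needs the most care, and that is where separability and the absence of a chronological boundary enter.
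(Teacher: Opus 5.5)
Your strategy is the paper's: build a timelike curve below $\gamma$ that shadows $\gamma(t_j)$ within $\rho$-distance $1/j$, and show it is future inextendible by transferring a putative endpoint to $\gamma$. Condition~B together with $d(x,\sigma(s))\le d(x,\gamma(t_j))$ and monotonicity of $t\mapsto d(x,\gamma(t))$ then finishes. The construction of $\sigma$ and the final contradiction are fine. The past extension via Corollary~\ref{cor:inex_timelike} is unnecessary, since Condition~B only involves future inextendibility and $\sigma$ is defined on $[0,\infty)$.

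The step that does not go through as written is the endpoint transfer. Applying Lemma~\ref{lem:unique_monot_limit} with $\Sigma=[0,\sup I)$ requires its hypothesis (ii): \emph{every} sequence $\gamma(s_k)$ with $s_k\nearrow\sup I$ must have an accumulation point. You only know that the single sequence $y_j=\gamma(t_j)$ converges to $p$, which says nothing about the values of $\gamma$ between the $t_j$. Taking $\Sigma=\{t_j\}$ does verify (ii), but gives nothing, because the ``in particular'' clause needs an interval. The paper bridges exactly this point with \cite[Proposition~18]{Mi26}, after checking that all three settings can be viewed as Lorentzian metric spaces with $I(X)=X$.

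Within your framework the repair is as follows. Since $y_j\le y_k$ for $k\ge j$, $y_k\to p$, and $J^+(y_j)$ is closed, causal simplicity gives $y_j\le p$ for all $j$. Hence $\gamma(t)\le p$ for all $t\in[0,\sup I)$, so $\gamma$ is future imprisoned in the causal diamond $J(y,p)$. This diamond is compact by global hyperbolicity; in the BMS case this comes from \cite[Theorem~4.5]{BMS25}, using the empty chronological boundary. Lemma~\ref{lem:inextendible_charac} then yields the future endpoint of $\gamma$. This step relies on compactness of causal diamonds (or local compactness, as in \cite{Mi26}), which is missing from your list of required inputs.
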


Note that in the KS-setting the following proof goes through no matter whether Conditions B, A$^*$, and A are interpreted in terms of continuous or locally Lipschitz continuous causal curves.

\begin{proof}
Let $x,y \in X$ with $x\ll y $ and let $\gamma: I \rightarrow X$ be a future inextendible causal curve with $\min I =0$ and $\gamma(0)=y$ without loss of generality. We can assume that $c \coloneqq \sup I \notin I$ so that $I=[0,c)$. By the push-up principle from Subsection~\ref{sub:ms_spaces}, we have $\gamma(t) \in I^+(x)$ for all $t \in [0,c)
$ and by continuity of the Lorentzian distance $I^+(x)$ is open. Since $X$ is timelike path-connected and $I^-(\gamma(t))$ is non-empty and open for all $t \in [0,c)$, it follows that $I^+(x) \cap I^-(\gamma(t))$ is non-empty for all $t \in [0,c)$. Therefore, given a sequence $(t_j)_{j\in \N}$ with $0=t_0 < t_j \nearrow c$ as $j\to\infty$, by timelike path-connectedness, we can successively construct by concatenation a timelike curve $\tilde \gamma:[0, c) \rightarrow X$ with $\tilde \gamma(0) =x$, $\tilde \gamma(t_j) \in I^+(\tilde \gamma(t_{j-1})) \cap I^-(\gamma(t_j))$, and $\rho(\tilde \gamma(t_j),\gamma(t_j)) \leq 1/j$ for all $j\in \N$, where $\rho$ denotes a metric on $X$ that induces the topology of $X$; see Subsections \ref{sub:ks_spaces} and~\ref{sub:BMS_spaces}, respectively. We claim that $\tilde \gamma$ is future inextendible. Assume by contradiction that it is not.
Then $\tilde \gamma$ has a future endpoint as the extension is continuous, 
and also $(\gamma(t_j))$ converges to it by the preceding paragraph. Recall from Subsection \ref{sub:BM_spaces} that every globally hyperbolic BM-length metric spacetime can be regarded as a BMS-Lorentzian length space without chronological boundary. Moreover, in the BMS-setting separability and an empty chronological boundary imply that $X$ is countably generated, in particular, that $X$ is first-countable and $I(X)=X$; see Subsection \ref{sub:BMS_spaces}. In the KS-setting we can regard $X$ as a Lorentzian metric space with $I(X)=X$ as well as discussed in Subsection~\ref{sub:ks_spaces}. Hence, in all cases \cite[Proposition~18]{Mi26} implies that also $\gamma$ has a future endpoint, in contradiction to Lemma~\ref{lem:no_bubbling_(t)-inext}. Now, as $X$ satisfies Condition B and $\tilde \gamma:[0,c) \rightarrow X$ is inextendible, the function $t \mapsto d(x,\tilde \gamma(t))$ is unbounded. As $d(x,\tilde \gamma(t_j)) \leq d(x,\gamma(t_j))$ by the reverse triangle inequality, also $d(x, \gamma(t))$ is unbounded in $t$. Since $t\mapsto d(x,\gamma(t))$ is moreover increasing by the reverse triangle inequality, we have $d(x,\gamma(t)) \rightarrow \infty$ as $t \rightarrow c$. With an analogous past argument, it follows that $X$ satisfies Condition A$^*$.
\end{proof} 

\begin{rem} \label{rem:boundary_version_prop_B} The statement of the proposition also holds for a metrizable, locally compact, timelike path-connected BMS-Lorentzian metric space. Indeed, in this case the proof of \cite[Proposition~18]{Mi26} still shows that in the above proof $\gamma$ has a future endpoint, in contradiction to Lemma~\ref{lem:no_bubbling_(t)-inext}, so that the claim follows as before.\end{rem}

Finally, we record the following conclusion in the smooth manifold setting.

\begin{cor}\label{cor:completeness_smooth_case}
For a smooth, globally hyperbolic Lorentzian manifold, the notions finite compactness, timelike Cauchy completeness, Condition B, Condition A$^*$, and Condition A (both in terms of inextendible continuous causal curves and in terms of inextendible locally Lipschitz continuous causal curves) are all equivalent.
\end{cor}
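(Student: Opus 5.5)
The plan is to close a cycle of implications,
\[
\text{finite compactness} \Rightarrow \text{timelike Cauchy completeness} \Rightarrow \text{Condition B} \Rightarrow \text{Condition A}^* \Rightarrow \text{Condition A} \Rightarrow \text{finite compactness},
\]
and to do this once for inextendibility in the continuous sense and once in the locally Lipschitz sense. The first step is to check that a smooth, globally hyperbolic Lorentzian manifold $(M,g)$ satisfies all standing hypotheses of the earlier results. With a complete Riemannian metric $\rho$ it is a first-countable, causally simple KS-Lorentzian length space (cf.\ \cite[Theorem~5.12]{KS18}). It is globally hyperbolic and hence metric-compatible, and it is causally and timelike path-connected, so it has causal and timelike extensions. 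Its Lorentzian distance is finite and continuous. By Remarks on Definition~\ref{dfn:inext_def}, (a) and (b), the continuous and the locally Lipschitz notions of (future/past) inextendibility of causal curves coincide in $M$. Consequently Conditions B, A$^*$ and A do not depend on which curve class is used, and it suffices to argue in one setting.

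The forward chain then follows from earlier results. Proposition~\ref{prp:finitely_compact_weakly_complete} gives finite compactness $\Rightarrow$ timelike Cauchy completeness. Proposition~\ref{prp:tcc_propB} gives timelike Cauchy completeness $\Rightarrow$ Condition B; this is where metric-compatibility is used in the Lipschitz reading. Proposition~\ref{prp:tcc_propAB}, applied to $M$ as a globally hyperbolic KS-Lorentzian length space, gives Condition B $\Rightarrow$ Condition A$^*$. The implication Condition A$^*$ $\Rightarrow$ Condition A is trivial, because Condition A$^*$ quantifies over all inextendible causal curves, which include the inextendible causal geodesics. For the geodesics, note that in $M$ an inextendible causal geodesic in the sense of \cite{Ta25} is, after reparametrization, an inextendible solution of the geodesic equation (\cite[Proposition~5.34]{On83}). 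Condition A in our sense therefore agrees with Beem's original Condition A.

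To close the cycle we need Condition A $\Rightarrow$ finite compactness, and for this I would invoke Beem's theorem \cite[Theorem~5]{Be76}. That result is stated for smooth globally hyperbolic spacetimes with Beem's definitions, so the main care is to match definitions. Beem's finite compactness is formulated with the same data as Definition~\ref{dfn:finitely_compact} (sequences in $J^+(y)$ with $x\ll y$ and bounded $d(x,\cdot)$, together with the time-dual version), or equivalently as compactness of the sets $\{z\in J^+(y)\mid d(x,z)\le r\}$, which is equivalent to Definition~\ref{dfn:finitely_compact} because $M$ is second countable (cf.\ \cite[Lemma~3.2]{Ta25}). If a direct argument is preferred, I would argue as follows. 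Suppose finite compactness fails. Then there is a sequence $x_j\in J^+(y)$ with $d(x,x_j)\le B$ and no accumulation point. Take maximal geodesics from $y$ to $x_j$. By the limit curve theorem in $M$, a subsequence converges to a future inextendible causal geodesic $\gamma$ from $y$, and lower semicontinuity of $d$ bounds $d(x,\gamma(t))\le B$. This contradicts Condition A.

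The main obstacle is this last step: making sure that the cited equivalence really matches our definitions. In particular one must check that the limit geodesic is inextendible and that the bound on $d(x,\cdot)$ passes to the limit. The bound passes to the limit by lower semicontinuity of $d$ together with the reverse triangle inequality along the approximating geodesics. Inextendibility of the limit follows because the $x_j$ leave every compact set while global hyperbolicity keeps the approximating geodesics from being imprisoned.
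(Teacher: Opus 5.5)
Your proposal is correct and is essentially the paper's proof: you run the chain finite compactness $\Rightarrow$ timelike Cauchy completeness $\Rightarrow$ Condition~B $\Rightarrow$ Condition~A$^*$ $\Rightarrow$ Condition~A via Propositions~\ref{prp:finitely_compact_weakly_complete}, \ref{prp:tcc_propB} and \ref{prp:tcc_propAB}, observe that Condition~A only involves geodesics (so it is the same in both readings), and close the cycle with \cite[Theorem~5]{Be76}; your optional limit-curve sketch of that last implication is sound once you add that limits of maximal curves are maximal (upper semicontinuity of length plus continuity of $d$). One small caveat: Remarks on Definition~\ref{dfn:inext_def}, (a) and (b), only show that inextendibility of a \emph{given} locally Lipschitz curve is the same in both senses, not that the classes of curves quantified over in Conditions~B and A$^*$ coincide, so drop the claim that these conditions are independent of the curve class and instead run the chain separately in each reading, as the paper does.
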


\begin{proof}
We have seen that the strength of the listed conditions is decreasing in the stated order with respect to both notions of inextendible causal curves. However, Condition A is defined in terms of geodesics, which are smooth and whose inextendibility is independent of whether it is defined in terms of geodesics, continuous causal curves, or locally Lipschitz continuous causal curves. Since in a smooth, globally hyperbolic Lorentzian manifold Condition A implies finite compactness by \cite[Theorem~5]{Be76}, the claim follows.
\end{proof}

\subsection{Proof of Theorem~\ref{thm:complete}}
Propositions~\ref{prp:finitely_compact_weakly_complete} and~\ref{prp:tcc_propB} prove Theorem~\ref{thm:complete}, (i) and (ii), respectively. The final part of Theorem~\ref{thm:complete} follows from Proposition~\ref{prp:tcc_propAB}.

\section{A Hausdorff-type metric on the space of Cauchy sets} \label{sec:hausdorff_cauchy}

In this section we prove Theorem~\ref{thm:metric_properties} and analogous statements in the setting of Lorentzian pre-length spaces. Before we turn to Cauchy sets, we establish metric space properties on larger subspaces of subsets.

\subsection{Metric space properties on subsets of Lorentzian pre-length spaces}\label{sub:metric_space_properties}

Let $(X,\ell)$ be a causal  Lorentzian pre-length space with associated Lorentzian distance $d$.
Following an idea by Bahn and Ehrlich \cite{BE99}, for $x,y \in X$ we symmetrize the Lorentzian distance $d_J(x,y)=d(x,y)+d(y,x) = \max \{d(x,y),d(y,x) \}$ and  set 
\begin{equation*}
 d_J(A,B)\coloneqq \sup_{x \in A,y \in B} d_J(x,y) 
\end{equation*} for $A,B\subset X$. Moreover, we write $d_J(A,x) \coloneqq d_J(x,A) \coloneqq d_J(\{x\},A)$ for $x\in X$ and $A\subset X$. The map $d_J$ is clearly symmetric and non-negative but in general lacks all other properties of a metric; cf.~\cite[Section~5]{BE99}. As observed by Bahn and Ehrlich in the special case of Minkowski space, the map $d_J$ has some formal resemblance with the Hausdorff metric and is better behaved when restricted to certain subsets of $X$. Namely, writing
 \begin{equation}
     \label{eq:nghb}N(A,\varepsilon) \coloneqq \{ x \in X \mid d_J(x,A)  \leq \varepsilon \}
 \end{equation}
for $\varepsilon>0$, for $A,B \subset X$ it holds that $d_J(A,B) \leq \varepsilon$ if and only if $A \subset N(B,\varepsilon)$ and $B \subset N(A,\varepsilon)$; cf.~\cite[Lemma~5.1]{BE99}. 
To obtain better properties, we first show a variant of \cite[Lemma~5.1]{BE99} based on the following notions.

\begin{dfn} A subset $A$ of a Lorentzian pre-length space is called \emph{achronal} if $I^+(A) \cap A = \emptyset$ and \emph{chronologically observing} if $X\backslash A \subset I^+(A)\cup I^-(A)$.
\end{dfn}

Note that a subset of a timelike path-connected Lorentzian pre-length space is achronal if and only if no timelike curve intersects it more than once.

\begin{lem}[Definiteness]\label{lem:separation}
For a causal Lorentzian pre-length space $X$ and $A,B\subset X$, the following statements hold.
\begin{enumerate}
\item[(i)]  If $A\backslash B\subset I^+(B)\cup I^-(B)$ and $B\backslash A \subset I^+(A)\cup I^-(A)$, then $d_J(A,B)=0$ implies that $A=B$. 
\item[(ii)]  If $A$ is achronal, then $d_J(A,A)=0$.
\end{enumerate}
\end{lem}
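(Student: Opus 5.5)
Both parts follow directly from unwinding the definitions of $d_J$ and of the timelike relation $\ll$ as $d^{-1}((0,\infty])$. No result beyond the basic definitions of Subsection~\ref{sub:ms_spaces} is needed.

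For (i), I would argue by contraposition. Suppose $d_J(A,B)=0$ and, without loss of generality, that some $x\in A\setminus B$ exists. The case $x\in B\setminus A$ is symmetric and uses the second hypothesis. By the first hypothesis, $x\in I^+(B)\cup I^-(B)$, so there is $y\in B$ with $y\ll x$ or $x\ll y$. By definition of the timelike relation, this means $d(y,x)>0$ or $d(x,y)>0$. Hence $d_J(x,y)=\max\{d(x,y),d(y,x)\}>0$, and since $x\in A$, $y\in B$, we get $d_J(A,B)\geq d_J(x,y)>0$, a contradiction. Therefore $A\setminus B=\emptyset$, and symmetrically $B\setminus A=\emptyset$, so $A=B$.

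The paper writes $d_J(x,y)=d(x,y)+d(y,x)=\max\{\dots\}$. In a causal space, at most one of $d(x,y)$ and $d(y,x)$ is positive for $x\neq y$: if both were positive, then $x\leq y\leq x$, so $x=y$ by causality. For $x=y$ one would need $d(x,x)=0$, i.e.\ chronology. For the argument above only positivity of the sum or max matters, so either reading works.

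For (ii), I would show $d_J(x,y)=0$ for all $x,y\in A$. If $d(x,y)>0$, then $x\ll y$, so $y\in I^+(A)\cap A$, which contradicts achronality. Likewise $d(y,x)>0$ is impossible. The case $x=y$ is included, since $d(x,x)>0$ would give $x\in I^+(x)\cap A$. Hence $d_J(A,A)=\sup 0=0$, which also holds trivially for $A=\emptyset$. There is no real obstacle here; the only point needing care is the $x=y$ case, which achronality handles directly.
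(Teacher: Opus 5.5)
Your proof is correct and follows the paper's argument for both parts: in (i), a point of $A\setminus B$ is timelike related to some point of $B$, which forces $d_J(A,B)>0$; in (ii), achronality rules out $d(x,y)>0$ for $x,y\in A$. Your side remark is slightly overcautious. Since $d(x,x)\in\{0,\infty\}$, the sum and the maximum in $d_J(x,x)$ agree even without chronology, but this does not affect your argument.
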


Note that the condition in (i) holds in particular if $A$ and $B$ are chronologically observing.
\begin{proof} (i) Suppose that $A$ and $B$ are distinct. Then there is without loss of generality some $x \in A\backslash B$ with $x\in I^+(B)$ by assumption. This means that there is some $y\in B$ with $x \in I^+(y)$ and thus $d_J(A,B) \geq d_J(x,y) >0$. 

(ii) Let $x,y\in A$. By definition we have $d_J(x,y)=0$ if $y \notin I^+(x) \cup I^-(x)$. Since $I^+(A) \cap A = \emptyset$ also implies $I^-(A) \cap A = \emptyset$, the claim follows.
\end{proof} Unlike Bahn and Ehrlich, we now also provide conditions under which $d_J$ satisfies the triangle inequality.

\begin{dfn}\label{dfn:weak timelike-path-obser} A subset $A$ of a causal Lorentzian pre-length space $X$ is called \emph{weakly timelike intercepting} if for any pair of points $x \ll z$ in $X$ there is a point $y\in A$ which satisfies one of the following conditions
 \begin{enumerate}
\item $z\leq y$
\item $x \leq y \leq z$ and $d(x,z)  = d(x,y)+d(y,z)$
\item $y \leq x$
 \end{enumerate}
\end{dfn} Such subsets have the following property relevant to the triangle inequality.
\begin{lem}\label{lem:pre_time_path_observ} Let $A$ be a weakly timelike intercepting subset of a causal Lorentzian pre-length space $X$. Then for every pair of points $x \ll z$ in $X$  there is a point $y\in A$ with 
 \begin{equation}\label{eq:triangle_pf}
   d_J(x,z) \leq d_J(x,y) + d_J(y,z)\,. 
 \end{equation}
 \end{lem}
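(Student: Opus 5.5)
The plan is to use the weak timelike interception property directly. Given $x \ll z$, take the point $y \in A$ provided by Definition~\ref{dfn:weak timelike-path-obser} and check \eqref{eq:triangle_pf} separately for each of the three alternatives (i)--(iii). Two preliminary observations will be used throughout. First, $x\ll z$ and causality give $d(z,x)=0$: otherwise $z\ll x$, and then $x\le z\le x$ would force $x=z$. Since $d(z,x)=0$, $x\ne z$ and $z\ll x\ll z$ would give $z\ll z$ by transitivity, which I would rule out using $x\le z\le x$. So $d_J(x,z)=d(x,z)$. Second, $d_J(u,v)\ge d(u,v)$ and $d_J(u,v)\ge d(v,u)$ for all $u,v$, because $d_J$ dominates both one-sided distances.

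In case (ii), $x\le y\le z$ and $d(x,z)=d(x,y)+d(y,z)$. Then
\[
d_J(x,z)=d(x,z)=d(x,y)+d(y,z)\le d_J(x,y)+d_J(y,z),
\]
and this case is done.

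In case (i), $z\le y$. Since $x\le z\le y$, the reverse triangle inequality for $d$ along causal chains (Subsection~\ref{sub:ms_spaces}) gives
\[
d(x,z)+d(z,y)\le d(x,y),
\]
so $d(x,z)\le d(x,y)\le d_J(x,y)\le d_J(x,y)+d_J(y,z)$. Case (iii) is symmetric: from $y\le x\le z$ we get $d(y,x)+d(x,z)\le d(y,z)$, hence $d(x,z)\le d(y,z)\le d_J(y,z)$.

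The only delicate point is the identity $d_J(x,z)=d(x,z)$, that is, $d(z,x)=0$ when $x\ll z$. This needs causality, in the form that $\ll$ is antisymmetric on distinct points, so that $d$ cannot be positive in both directions. As a fallback that avoids this, one can bound the sum $d(x,z)+d(z,x)$ directly. If $d(z,x)>0$ as well, then $x\le z\le x$ forces $x=z$ by causality, which gives the contradiction. Beyond that, I do not expect any real obstacle; the argument is a case check combined with the reverse triangle inequality.
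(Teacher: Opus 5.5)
Your proof is correct and is essentially the paper's argument: take the $y$ given by the definition, note that case (ii) is immediate, and handle cases (i) and (iii) by the reverse triangle inequality (you simply drop the nonnegative term rather than writing $d(x,z)\leq d(y,z)-d(y,x)$ as the paper does). One sentence in your justification of $d_J(x,z)=d(x,z)$ is garbled, since causality alone does not exclude $x=z$ with $x\ll x$; in that case $d(x,x)=\infty$ and the identity holds trivially, and the paper takes it directly from the definition $d_J(x,y)=\max\{d(x,y),d(y,x)\}$.
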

 \begin{proof} Let $y\in A$ be the point provided by Definition~\ref{dfn:weak timelike-path-obser}. We can assume that we are in case (i) or (iii). If $y \leq x$, we have  $d(y,x)\geq 0$ and
\[
    d_J(x,z)=d(x,z) \leq d(y,z)-d(y,x) \leq d(y,z)+d(y,x)\leq d_J(y,z)+d_J(x,y)
\] by the reverse triangle inequality. The case $z \leq y$  is analogous.
\end{proof}

We can now establish the triangle inequality for $d_J$ on weakly timelike intercepting sets. For more concrete sufficient assumptions under which the following lemma holds, we refer to Subsection~\ref{sub:metric_on_cauchy_sets}.

\begin{lem}[Triangle inequality] \label{lem:tri_ineq}
 Every triple of weakly timelike intercepting subsets $A$, $B$, $C$ of a causal Lorentzian pre-length space $X$ satisfies
\[
   d_J(A,C) \leq d_J(A,B) + d_J(B,C)\,.
\]
 \end{lem}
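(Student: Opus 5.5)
The plan is to reduce the statement to a pointwise estimate and apply Lemma~\ref{lem:pre_time_path_observ} to the middle set $B$. By definition of $d_J$ as a supremum, it suffices to show that for all $x\in A$ and $z\in C$ we have
\[
 d_J(x,z)\le d_J(A,B)+d_J(B,C).
\]

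First dispose of the trivial case. If $d_J(x,z)=0$, there is nothing to prove, since the right-hand side is non-negative. Otherwise $d(x,z)>0$ or $d(z,x)>0$, so $x\ll z$ or $z\ll x$. By the symmetry of $d_J$, and after swapping the roles of $(x,A)$ and $(z,C)$, which leaves the right-hand side unchanged because $d_J$ is symmetric, we may assume $x\ll z$.

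Next invoke Lemma~\ref{lem:pre_time_path_observ} with the weakly timelike intercepting set $B$. It yields a point $y\in B$ such that $d_J(x,z)\le d_J(x,y)+d_J(y,z)$. Since $x\in A$, $y\in B$ and $z\in C$, we get $d_J(x,y)\le d_J(A,B)$ and $d_J(y,z)\le d_J(B,C)$. Taking the supremum over $x\in A$ and $z\in C$ then gives the claim, with the usual conventions if either term is $\infty$.

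Note that case (ii) of Definition~\ref{dfn:weak timelike-path-obser}, which the proof of Lemma~\ref{lem:pre_time_path_observ} skips as ``we can assume'', gives $d(x,z)=d(x,y)+d(y,z)$ directly, so it poses no problem. The only real content is therefore already in that lemma. The one point to check is the reduction from $z\ll x$ to $x\ll z$: we rename $x\leftrightarrow z$ and apply the lemma to the pair $z\ll x$, which still produces a point of $B$, so the argument is symmetric. In particular, only the weak timelike interception of $B$ is used; that of $A$ and $C$ is not needed.
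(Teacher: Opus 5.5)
Your proof is correct and follows essentially the paper's argument: both reduce the claim to Lemma~\ref{lem:pre_time_path_observ} applied to the middle set $B$, and use symmetry of $d_J$ to assume $x\ll z$. The only difference is cosmetic. You bound $d_J(x,z)$ for every pair $x\in A$, $z\in C$ and then take the supremum, which covers the case $d_J(A,C)=\infty$ in one step; the paper instead chooses near-optimal pairs up to an $\varepsilon$ and treats the infinite case separately.
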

 
 \begin{proof} 
 We first assume that $d_J(A,C)< \infty$. Let $\varepsilon>0$. 
 There are $x \in A$ and $z \in C$ such that $d_J(x,z)+\varepsilon>d_J(A,C)$ and $d(x,z)>0$ without loss of generality. By Lemma~\ref{lem:pre_time_path_observ} there is a point $y \in B$ such that \eqref{eq:triangle_pf} holds. Hence, we find that
\begin{equation*} d_J(A,C)-\varepsilon<d_J(x,z)  
    \leq d_J(A,B) + d_J(B,C)\,,
\end{equation*} which completes the proof in this case by sending $\varepsilon\to 0$.

If $d_J(A,C)= \infty$, we can find by definition of $d_J$ points $x\in A$ and $z\in C$ such that $d(x,z)>0$ is arbitrarily large and $d(x,z) \leq d_J(A,B) + d_J(B,C)$ by Lemma~\ref{lem:pre_time_path_observ}. Hence, also the right hand side of the triangle inequality is infinite.
\end{proof}

\subsection{The space of Cauchy sets }\label{subsec:Cauchy_sets}

First, we define different versions of Cauchy sets and discuss their existence and properties. The following definition is meaningful if there are sufficiently many inextendible curves of the respective type. Otherwise, every subset can be a Cauchy set; see Examples~\ref{exl:null_space} and~\ref{exl:non_geod_complete_examples}. For instance, 
there is an inextendible causal curve (defined on an open interval) through every point of a KS-Lorentzian length space with a proper metric by \cite[Corollary~2.23]{BG25} and of a causally path-connected, countably generated Lorentzian metric space by \cite[Proposition~20]{Mi26}. Sufficient conditions on a Lorentzian pre-length space for the existence of an inextendible timelike curve through every point are provided in Corollary~\ref{cor:inex_timelike}. For instance, there is an inextendible timelike curve (defined on an open interval) through every point of a globally hyperbolic KS-Lorentzian length space. In a timelike geodesically complete Lorentzian manifold, there even exists a Cauchy complete timelike curve through every point.

\begin{dfn}\label{dfn:cauchy_set}
A subset of a Lorentzian pre-length space $X$ is called a \emph{Cauchy set} if it is intersected exactly once by every inextendible timelike curve. We call it a 
\emph{strong Cauchy set} if it is intersected exactly once by every inextendible causal curve. We call it a \emph{weak Cauchy set} if it is intersected exactly once by every Cauchy complete timelike curve. We denote the set of (strong resp.~weak) Cauchy sets in $X$ by ($\Cauchy^s_X$ resp.~$\Cauchy^w_X$) $\Cauchy_X$. The Lorentzian pre-length space $X$ is called \emph{spatially compact} if it contains a Cauchy set and all its Cauchy sets are compact.
\end{dfn}

Every Cauchy set in a globally hyperbolic Lorentzian manifold is a Lipschitz hypersurface (see e.g.~\cite[Theorem~2.147]{Mi19}), which is why it is usually called a \emph{Cauchy hypersurface} in this case. In a Lorentzian manifold $M$ neither the precise regularity of a causal resp.~timelike curve nor the precise sense of ``timelike'' (see Subsection~\ref{sub:ms_spaces}) does make a difference for the definition of a (strong) Cauchy set, as explained in more detail in the next remark. 

\begin{rem}\label{rem:timelike_variation} First, recall from Remarks on Definition~\ref{dfn:inext_def} that the notion of inextendibility is unambiguous in case of a globally hyperbolic Lorentzian manifold $M$. Suppose that a subset $S$ of $M$ is intersected exactly once by every inextendible locally Lipschitz continuous a.e.~timelike (causal) curve $\gamma$ (i.e.~$\gamma'(t)$ timelike (causal) for almost all~$t$). If an inextendible continuous timelike (causal) curve intersects $S$ twice, say in $x,y\in M$ with $x \ll y$ ($x\leq y$), then one can also construct an inextendible locally Lipschitz continuous a.e.~timelike (causal) curve intersecting $S$ twice, namely as a concatenation of a past inextendible locally Lipschitz continuous timelike (causal) curve ending at $x$, a smooth timelike (causal) geodesic from $x$ to $y$, and a future inextendible locally Lipschitz continuous timelike (causal) curve starting at $y$ by an application of a Lipschitz version of Corollary~\ref{cor:inex_timelike} and the fact that $M$ is regular and geodesic. Similarly, if an inextendible continuous timelike (causal) curve does not intersect $S$, then one can construct an inextendible locally Lipschitz continuous a.e.~timelike (causal) curve that does not intersect $S$; see the proofs of \cite[Lemma~14.29]{On83} and Proposition~\ref{prp:tcc_propAB}. In view of \cite[Lemma~3.1 and Corollary~3.2]{LP25}, the same discussion applies to conical Minkowski spacetimes. 
\end{rem}

In a Lorentzian manifold, a Cauchy hypersurface is intersected by every inextendible causal curve, and every causal curve is the restriction of an inextendible causal curve; see \cite[Lemma 14.29]{On83} and \cite[Proposition 2.22]{BG25}, for instance. Hence, in this case a Cauchy hypersurface $S$ is strong if and only if it is \emph{acausal}, i.e.~intersected at most once by every causal curve. In Minkowski space it is easy to construct Cauchy hypersurfaces that are not acausal.

As mentioned in the introduction, a Lorentzian manifold has a Cauchy hypersurface if and only if it is globally hyperbolic, and in this case all Cauchy hypersurfaces are homeomorphic; see \cite[Theorem 11]{Ge70}. In fact, every globally hyperbolic Lorentzian manifold has a smooth spacelike, and hence acausal, Cauchy hypersurface, as proved in \cite{BS03}. 

The characterization by Geroch has been generalized to second-countable, globally hyperbolic KS-Lorentzian length spaces with a proper metric structure in \cite[Theorem~1.3]{BG25}. The existence of a strong Cauchy set in a countably generated Lorentzian metric space is shown in \cite[Theorem~21]{Mi26}.  In fact, in all these cases the authors establish the existence of a (strong) ``Cauchy time function'', that is, a time function whose level sets are all strong Cauchy sets; see Appendix~\ref{app:time_functions}. There we explain that the proof of Minguzzi's result also provides a strong Cauchy time function for every separable, first-countable Lorentzian metric space $X$ with compact causal diamonds, empty bubbling boundary, and empty spacelike boundary. In particular, the statement also applies to separable, globally hyperbolic BM-length metric spacetimes, to separable (or equivalently second-countable), globally hyperbolic KS-Lorentzian length spaces (without assuming properness of the metric structure), and to conical Minkowski spacetimes over a compact domain, not all of which are covered by Minguzzi's result; cf.~Subsection~\ref{sub:BM_spaces},~\ref{sub:ks_spaces}, and~\ref{sub:conical_Minkowski}. Recall in this respect that by Remarks on Definition~\ref{dfn:inext_def}, (a) and (b), a causal curve in a KS-Lorentzian length space is inextendible as a continuous causal curve if and only if it is inextendible as a locally Lipschitz continuous curve. Hence, the strong Cauchy sets provided by this result in the setting of a globally hyperbolic KS-Lorentzian length space are also strong Cauchy sets in the KS-sense with locally Lipschitz continuous curves.

The following statement is an immediate consequence of the definition of Condition B.

\begin{cor}\label{cor:weak cauchy_is_cauchy}
In a first-countable, causally simple Lorentzian pre-length space $X$ that satisfies Condition B, every weak Cauchy set is a Cauchy set, i.e.~$\Cauchy^w_X=\Cauchy_X$.
\end{cor}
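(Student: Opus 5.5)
The plan is to compare the two classes of test curves in the definitions directly. A Cauchy set must be met exactly once by every inextendible timelike curve. A weak Cauchy set only has to be met exactly once by every Cauchy complete timelike curve. By the convention made after the definition of Condition~B, a Cauchy complete timelike curve is in particular inextendible. So every Cauchy set is a weak Cauchy set, which gives $\Cauchy_X \subset \Cauchy^w_X$ with no hypotheses on $X$.

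For the reverse inclusion it suffices to show that, under Condition~B, every inextendible timelike curve $\gamma:I\to X$ is Cauchy complete. The two classes of test curves then coincide and the definitions agree, so $\Cauchy^w_X\subset\Cauchy_X$. Let $\gamma$ be such a curve.
\begin{itemize}
\item Since $\gamma$ is inextendible, it is in particular future inextendible. Condition~B then states exactly that $\gamma$ is future Cauchy complete.
\item Likewise $\gamma$ is past inextendible, so it is past Cauchy complete.
\item Hence $\gamma$ is Cauchy complete.
\end{itemize}

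There is no real obstacle here. The one point to check is that the notion of inextendibility (future/past inextendible in the sense of Definition~\ref{dfn:inext_def}) is used identically in the definition of Condition~B and in the definition of (weak) Cauchy sets, so that the quantifiers match. The hypotheses ``first-countable, causally simple'' are not needed for this set-theoretic identification. They only fix the setting in which Condition~B is meaningfully available, e.g.\ via Proposition~\ref{prp:tcc_propB} from timelike Cauchy completeness.
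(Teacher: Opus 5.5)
Your proposal is correct and is the same argument the paper has in mind: it calls the statement an immediate consequence of the definition of Condition~B. Under Condition~B the inextendible timelike curves and the Cauchy complete timelike curves are the same class, so the two definitions coincide. You are also right that first-countability and causal simplicity play no role in this identification.
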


To complement the abstract notion of a Cauchy set with a more concrete example, we state a characterization of Cauchy sets in conical Minkowski spacetimes from \cite[Lemma 3.5]{LP25}; see Subsection~\ref{sub:conical_Minkowski} and Remark~\ref{rem:timelike_variation}. In particular, the lemma provides an example of a spatially compact Lorentzian pre-length space.

\begin{lem}\label{lem:cauchy_graph}
 For a compact smooth domain $\Omega \subset \HH^n$, the set of (strong) Cauchy sets of the regular part $M_{\mathrm{r}}$ of a conical Minkowski spacetime $M= \R_{\geq 0} \Omega$ is given by the set of graphs $S_f$ of
locally Lipschitz functions $f:\Omega \rightarrow \R_+$ for which $\ln(f)$ is (strictly) $1$-Lipschitz continuous with respect to the intrinsic metric of $\Omega$.
\end{lem}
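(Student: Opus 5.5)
The plan is to use the polar structure of $M_{\mathrm{r}}=\R_{>0}\Omega\cong \R_{>0}\times_{t^2}\Omega$. Write points as $u=r\,\xi$ with $r=|u|>0$ and $\xi=\pi(u)\in\Omega$. In logarithmic time $\tau=\ln r$ the metric becomes conformal to $-d\tau^2+g_\Omega$. Causal curves in $M_{\mathrm{r}}$ therefore correspond, after conformal change, to curves $(\tau(s),\xi(s))$ with $|\dot\xi|_{g_\Omega}\le\dot\tau$ (strict for timelike). Put differently, $u\le v$ iff $d_\Omega(\pi(u),\pi(v))\le \ln|v|-\ln|u|$, and $u\ll v$ iff the inequality is strict. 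This is consistent with the distance formula $d(u,v)^2=|u|^2+|v|^2-2|u||v|\cosh d_\Omega$ recalled in Subsection~\ref{sub:conical_Minkowski}. By Remark~\ref{rem:timelike_variation} we may work with locally Lipschitz a.e.~timelike/causal curves. First I will show that inextendible causal curves are exactly those whose $\tau$-component runs over all of $\R$. Since $\Omega$ is compact, $\xi$ has bounded speed relative to $\tau$, so a curve with bounded $\tau$-range would have an endpoint in $M_{\mathrm{r}}$. The edges $\tau\to\pm\infty$ correspond to $O$ and to infinity, neither of which lies in $M_{\mathrm{r}}$.

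For the inclusion ``graph $\Rightarrow$ Cauchy'', let $f$ be such that $h=\ln f$ is $1$-Lipschitz for $d_\Omega$, and set $S_f=\{f(\xi)\xi\}$. Take an inextendible timelike curve, parametrized by $\tau\in\R$, with spatial part $\xi(\tau)$. The function $\phi(\tau)=\tau-h(\xi(\tau))$ is strictly increasing, because $h(\xi(\tau_2))-h(\xi(\tau_1))\le d_\Omega(\xi(\tau_1),\xi(\tau_2))<\tau_2-\tau_1$, where the strict inequality comes from the timelike characterization. Moreover $h$ is bounded on compact $\Omega$, so $\phi\to\pm\infty$. Hence $\phi$ has exactly one zero, which gives exactly one intersection. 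In the strict case, i.e.\ $|h(\xi)-h(\eta)|<d_\Omega(\xi,\eta)$ for $\xi\neq\eta$, the same argument applies to causal curves. The only possible failure is when $\xi$ is constant on an interval, but then $\tau$ still increases strictly and $\phi$ remains strictly monotone. This gives strong Cauchy sets.

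For the converse, let $S$ be a Cauchy set. For each $\xi\in\Omega$ the radial ray $t\mapsto t\xi$, $t>0$, is an inextendible timelike curve. It therefore meets $S$ exactly once, at $f(\xi)\xi$, so $S=S_f$ for some function $f$. Achronality of $S$ (two intersection points cannot be timelike related, else a timelike curve through both meets $S$ twice, by the concatenation argument of Corollary~\ref{cor:inex_timelike}) gives $|\ln f(\xi)-\ln f(\eta)|\le d_\Omega(\xi,\eta)$. This makes $\ln f$ $1$-Lipschitz and, in particular, makes $f$ locally Lipschitz. If $S$ is strong, then it is acausal, and the causal characterization forces strict inequality for $\xi\neq\eta$.

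The main obstacle is the precise causal characterization in the non-convex case. One must show that $u\le v$ iff $d_\Omega(\pi u,\pi v)\le\ln(|v|/|u|)$, using the intrinsic length metric $d_\Omega$ rather than $d_{\HH}$, and with the boundary of $\Omega$ allowed. I would prove it directly. For ``$\Rightarrow$'', integrate $|\dot\xi|\le\dot\tau$ along a causal curve. For ``$\Leftarrow$'', take a near-minimizing path in $\Omega$, which is a rectifiable curve possibly touching $\partial\Omega$, parametrize it by arclength, and lift it with $\tau$ increasing at unit rate, then add extra radial slack to handle the strict case. Compactness of $\Omega$ ensures that $d_\Omega$-minimizers exist, which settles the equality case for acausality.
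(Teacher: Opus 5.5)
Your proof is correct. The paper contains no proof of this lemma to compare against: it imports the characterization from \cite[Lemma~3.5]{LP25}, and relies on Remark~\ref{rem:timelike_variation} (resting on \cite[Lemma~3.1, Corollary~3.2]{LP25}) for the passage between continuous and locally Lipschitz curves.

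Your route is self-contained. In log-time $\tau=\ln|u|$ the metric is $e^{2\tau}(-d\tau^2+g_\Omega)$. Hence $u\le v$ iff $d_\Omega(\pi u,\pi v)\le\ln|v|-\ln|u|$, and $u\ll v$ iff this inequality is strict. Everything else then reduces to monotonicity of $\phi(\tau)=\tau-\ln f(\xi(\tau))$. Once the two relations are characterized, the remaining steps use only the relations and completeness of $(\Omega,d_\Omega)$. These steps are: inextendibility iff the $\tau$-range is all of $\R$, a unique zero of $\phi$, radial rays forcing a graph, and achronality resp.\ acausality forcing the (strict) $1$-Lipschitz bound. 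They therefore apply verbatim to continuous curves in the sense of Subsection~\ref{sub:ms_spaces}, so you do not actually need Remark~\ref{rem:timelike_variation}. Your argument also isolates exactly where compactness of $\Omega$ enters:
\begin{enumerate}
\item completeness of $d_\Omega$, which gives endpoints for curves with bounded $\tau$-range;
\item boundedness of $\ln f$, which gives $\phi\to\pm\infty$;
\item existence of $d_\Omega$-minimizers, which handles the equality case for acausality.
\end{enumerate}
Your reading of ``strictly $1$-Lipschitz'' as $|\ln f(\xi)-\ln f(\eta)|<d_\Omega(\xi,\eta)$ for $\xi\neq\eta$ is the one under which the strong case holds.

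A few small points need tightening.
\begin{enumerate}
\item The bound $d_\Omega(\xi(\tau_1),\xi(\tau_2))\le\tau_2-\tau_1$ comes from the causal condition, not from compactness. Compactness only supplies completeness, and hence the limit point.
\item In the strict case, the case to single out is $\xi(\tau_1)=\xi(\tau_2)$, not ``$\xi$ constant on an interval''. The conclusion $\phi(\tau_2)-\phi(\tau_1)=\tau_2-\tau_1>0$ is unchanged.
\item The concatenation step needs a genuinely timelike curve between timelike related points. A null lift followed by radial slack only shows $d(u,v)>0$. Instead, parametrize a near-minimizer of length $L<\tau_2-\tau_1$ with constant ratio $|\dot\xi|/\dot\tau=L/(\tau_2-\tau_1)<1$.
\item Deducing that $f$ is locally Lipschitz for the manifold structure uses that $d_\Omega$ is locally comparable to $d_{\HH}$ up to $\partial\Omega$. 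This holds because $\Omega$ is a smooth domain.
\end{enumerate}
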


Since every point in $M$ lies on an inextendible timelike ray starting at $O$, the only Cauchy set of $M$ that is not contained in $M_{\mathrm{r}}$ is $\{O\}$.

Finally, we record the following basic observation that is needed later.

\begin{lem}\label{lem:prop_cauchy_set} Let $X$ be a timelike path-connected Lorentzian pre-length space such that there exists an inextendible timelike curve through every point in $X$ and let $S$ be a Cauchy set in $X$. Then $X=I^+(S) \cup S \cup I^-(S)$ and $J^{\pm}(S)=S\cup I^{\pm}(S)$ are disjoint unions. In particular, $S$ is closed.\end{lem}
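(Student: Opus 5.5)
The plan is to argue directly from the definition of a Cauchy set, using the inextendible timelike curve through each point together with timelike path-connectedness. First I show $X=I^+(S)\cup S\cup I^-(S)$. Take $x\in X$ and an inextendible timelike curve $\gamma:I\to X$ with $\gamma(t_0)=x$. Since $S$ is a Cauchy set, $\gamma$ meets $S$ in a point $\gamma(t_1)$. If $t_1<t_0$, then $\gamma(t_1)\ll x$ because $\gamma$ is timelike, so $x\in I^+(S)$. If $t_1>t_0$, then $x\in I^-(S)$, and if $t_1=t_0$, then $x\in S$.

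Next I prove disjointness, which rests on achronality of $S$. Suppose $p,q\in S$ with $p\ll q$. By timelike path-connectedness there is a timelike curve $\sigma$ from $p$ to $q$. By the variant of Lemma~\ref{lem:inex_extens} (or Corollary~\ref{cor:inex_timelike}) used to produce the inextendible curves through every point, $\sigma$ would extend to an inextendible timelike curve meeting $S$ twice, a contradiction. This requires extending a given timelike curve, not merely the existence of some inextendible curve through each point, so this is the step I expect to be delicate. I would handle it by concatenation instead. Take an inextendible timelike curve $\alpha$ through $p$ and an inextendible timelike curve $\beta$ through $q$. Then take the past part of $\alpha$ up to $p$, followed by $\sigma$, followed by the future part of $\beta$ after $q$. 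By transitivity of $\ll$ the concatenation is timelike. It stays past and future inextendible, because any extension would extend the past part of $\alpha$ or the future part of $\beta$, and those parts inherit inextendibility from $\alpha$ and $\beta$. This curve meets $S$ at both $p$ and $q$, contradicting the Cauchy property.

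With achronality in hand the disjointness follows. We have $S\cap I^\pm(S)=\emptyset$ directly. If $x\in I^+(S)\cap I^-(S)$, then $p\ll x\ll q$ for some $p,q\in S$, so $p\ll q$ by transitivity, again a contradiction.

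For $J^\pm(S)=S\cup I^\pm(S)$, the inclusion $\supseteq$ holds because $\ll\subset\le$ and $\le$ is reflexive. For $\subseteq$, let $p\le x$ with $p\in S$. By the first part, $x$ lies in $S$, $I^+(S)$ or $I^-(S)$. If $x\ll q$ for some $q\in S$, then the push-up principle gives $p\ll q$, contradicting achronality. Hence $x\in S\cup I^+(S)$, and this union is disjoint by the previous step.

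Finally, closedness follows because $X\setminus S=I^+(S)\cup I^-(S)$ is a union of the sets $I^\pm(y)$ over $y\in S$, each of which is open by the definition of a Lorentzian pre-length space.
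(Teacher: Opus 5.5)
Your proof is correct and takes the same route as the paper. You get exhaustion from the inextendible timelike curve through each point, rule out $S\cap I^\pm(S)\neq\emptyset$ with an inextendible timelike curve through two points $p\ll q$ of $S$, use the push-up principle to show $J^\pm(S)\cap I^\mp(S)=\emptyset$, and get closedness from openness of $I^\pm(S)$. Your explicit concatenation (the past part of $\alpha$, then $\sigma$, then the future part of $\beta$) just spells out what the paper compresses into one line.
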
 

\begin{proof}
    Note that the above union exhausts $X$ since through every point there exists an inextendible timelike curve intersecting $S$. If $S\cap I^\pm(S)\neq \emptyset$, then by timelike path-connectedness and the existence of inextendible timelike curves through every point we can construct an inextendible timelike curve that crosses $S$ twice, a contradiction to $S$ being a Cauchy set. 
    
    If $x\in J^+(S)\cap I^-(S)$ (and analogously if $x\in J^-(S)\cap I^+(S)$), then there are $y,y'\in S$ such that $y\leq x\ll y'$, but by the push-up principle $y\ll y'$, again a contradiction to $S$ being a Cauchy set. This in particular implies $I^+(S)\cap I^-(S)=\emptyset$. 
    
    As $I^\pm(S)$ are open, $S$ is closed.
\end{proof}

\subsection{The metric space of Cauchy sets} \label{sub:metric_on_cauchy_sets}
In Subsection~\ref{sub:metric_space_properties} we have shown that the map $d_J$ restricts to an extended metric on the space of subsets of a Lorentzian pre-length space that
are achronal, chronologically observing, and weakly timelike intercepting. The following lemma provides sufficient conditions for the first two properties in the case of Cauchy sets.

\begin{lem}\label{lem:cauchy_is_achronal_and_chro_obs} (i) Let $X$ be a causally path-connected Lorentzian pre-length space. If there exists an inextendible causal (timelike) curve through every point in $X$, then every strong Cauchy set in $X$ is achronal (and chronologically observing).

(ii) Let $X$ be a timelike path-connected Lorentzian pre-length space. If there exists a (Cauchy complete) inextendible timelike  curve through every point in $X$, then every (weak) Cauchy set in $X$ is achronal and chronologically observing.
\end{lem}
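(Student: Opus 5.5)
Both parts follow the pattern of the proof of Lemma~\ref{lem:prop_cauchy_set}. A chronological relation between two points of the set, or a point outside the set that is not chronologically related to it, is turned into an admissible inextendible curve meeting the set twice or not at all. The curve is built by concatenating path-connecting curves with the inextendible curves assumed through every point.

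For achronality in (i), suppose $x,y\in S$ with $x\ll y$. Since $\ll\subset\leq$ and $X$ is causally path-connected, there is a causal curve $\sigma$ from $x$ to $y$. Take an inextendible causal curve through $x$, restrict it to its past part ending at $x$, and take another through $y$, restricted to its future part starting at $y$. Concatenating these with $\sigma$ gives a causal curve. It is past inextendible because any past extension of it would extend the original past piece. The same argument gives future inextendibility. Since $x\neq y$ by causality of $X$, this curve meets $S$ in two distinct points, contradicting that $S$ is a strong Cauchy set.

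For chronological observation in (i), assume in addition that there is an inextendible timelike curve $\gamma$ through every point. Let $x\notin S$ and let $\gamma$ pass through $x$. Any timelike curve is in particular causal, and inextendibility of $\gamma$ as a timelike curve must be upgraded to inextendibility as a causal curve. I expect this to be the main obstacle. I would handle it by reading the hypothesis as providing curves that are inextendible as causal curves, which is how the notion is used in Corollary~\ref{cor:inex_timelike}. Granting this, $\gamma$ meets $S$ at some $\gamma(t_1)$ with $t_1\neq t_0$, where $\gamma(t_0)=x$. Because $\gamma$ is timelike, $t_1<t_0$ gives $x\in I^+(S)$ and $t_1>t_0$ gives $x\in I^-(S)$.

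For (ii), use timelike path-connectedness to connect $x\ll y$ by a timelike curve. Glue to it the past half of a (Cauchy complete) inextendible timelike curve ending at $x$ and the future half of one starting at $y$. The concatenation is timelike, since the push-up principle gives the relation across the junctions. It is inextendible by the same restriction argument as in (i). For weak Cauchy sets we also need the concatenation to be Cauchy complete. The boundedness condition in that definition concerns only the tail, which coincides with the tail of the original Cauchy complete curve. The value $\sup I$ is unchanged up to a shift of parameters, so Cauchy completeness is inherited. The resulting curve meets $S$ twice, a contradiction, so $S$ is achronal. Chronological observation then follows exactly as in (i), now directly with timelike curves: the curve through $x\notin S$ is admissible, it meets $S$ at some other parameter, and timelikeness places $x$ in $I^+(S)\cup I^-(S)$.
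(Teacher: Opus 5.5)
Your proposal is correct and follows the paper's proof. Achronality comes from extending a causal (resp.\ timelike, Cauchy complete) curve connecting two chronologically related points of $S$ to an admissible inextendible curve that meets $S$ twice, and chronological observation comes from running an admissible inextendible timelike curve through $x\notin S$, which must meet $S$. The step you flag as the main obstacle is not one: by Definition~\ref{dfn:inext_def}, an \emph{inextendible} timelike curve is by convention one admitting no \emph{causal} extension, with t-inextendibility being the separate, weaker notion. Such a curve is therefore automatically admissible for strong Cauchy sets, exactly as you read it.
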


\begin{proof}
We first prove achronality in (i) and (ii). Assume first that $S$ is a strong Cauchy set in $X$ that is not achronal. Then there are points $x \ll y$ in $S$ that can be connected by a causal curve $\gamma$. By assumption this curve can be extended to an inextendible causal curve $\hat \gamma$, in contradiction to the definition of a strong Cauchy set. Under the additional assumption(s), the curve $\hat \gamma$ can be constructed to be timelike (and Cauchy complete) so that the contradiction also follows if $S$ is only a (weak) Cauchy set in this case.

Now we turn to the chronologically observing property. For a (weak) Cauchy set $S$ in $X$ and a point $x\in X \backslash S$, there is by assumption a (Cauchy complete) inextendible timelike curve through $x$. By the defining property of a (weak) Cauchy set, it intersects $S$ in a point $y$. Since the curve is timelike, we have $x \in I^+(y)\cup I^-(y)\subset I^+(S)\cup I^-(S)$, so $S$ is chronologically observing. Since we do not use any of the additional assumptions, strong Cauchy sets are in particular chronologically observing.
\end{proof}

Before we turn to the last desired condition, we discuss examples of geodesic Lorentzian pre-length spaces for which there is no inextendible timelike curve through some point and whose space of strong Cauchy sets is a non-empty pseudometric space, i.e.~a space whose metric does not necessarily satisfy definiteness, but not a metric space.

 \begin{exl}\label{exl:null_space}
 (a) We consider $X=(0,1) \subset \R$ with its subspace topology, the standard relation $\leq$, and the trivial Lorentzian distance $d=0$ on $X$. Then $(X,\leq,d)$ defines a  Lorentzian pre-length space with an empty chronological relation. In particular, there are no nonconstant timelike curves. It is globally hyperbolic in the sense that it is causal, its causal relation is closed, and all its causal emeralds are compact. 
 The causal curve $\gamma:(0,1) \rightarrow X$, $t\mapsto t$, is  inextendible and maximal, and any other causal curve in $X$ is up to reparametrization a restriction of $\gamma$. In particular, $X$ is geodesic (and thus causally path-connected), and the space of strong Cauchy sets in $X$ is given by $\mathcal{C}^s_X=X$. All strong Cauchy sets in $X$ are achronal and weakly timelike intercepting but none of them is chronologically observing. The space $(\mathcal{C}^s_X,d_J=0)$ is a pseudometric space but not a metric space. Since there are no inextendible timelike curves in $X$, every subset of $X$ is a Cauchy set.
  
(b) In the previous construction the causal boundary is empty, but each point belongs to both the future and past chronological boundary, and hence to the bubbling boundary. We can remedy this pathology by considering instead the ``induced Lorentzian length metric'' on an open polygonal arc $X$ in $\Lo^{1+1}$ consisting of a timelike segment, followed by a null segment, followed by a timelike segment with the restricted causal structure.  In this example the chronological boundary is empty. Moreover, the set of strong Cauchy sets in $X$ still coincides with $X$, and $d_J$ is trivial on subsets of $\mathcal C_X^s$ in the null segment. There are still no inextendible timelike curves, and thus every subset of $X$ is a Cauchy set.
 \end{exl}

To guarantee that Cauchy sets are weakly timelike intercepting, we assume the space $X$ to be geodesic (which already implies causal path-connectedness).

\begin{prp} \label{prp:conditions_ext_metric} Let $X$ be a geodesic Lorentzian pre-length space.

(i) If through every point of $X$ there is an inextendible causal  (resp.~timelike) curve, then $(\Cauchy^s_X,d_J)$ is an extended pseudometric (resp.~metric) space.

(ii) Suppose in addition that $X$ is first-countable and
timelike path-connected and has a continuous Lorentzian distance and compact causal diamonds. If through every point of $X$ there is a (Cauchy complete) inextendible timelike curve, then also $(\Cauchy_X,d_J)$ (and $(\Cauchy^w_X,d_J)$) is an extended metric space.
\end{prp}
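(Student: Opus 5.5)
The plan is to check the hypotheses of the general metric lemmas from Subsection~\ref{sub:metric_space_properties}: Lemma~\ref{lem:separation} gives definiteness, and Lemma~\ref{lem:tri_ineq} gives the triangle inequality. Symmetry and non-negativity of $d_J$ are automatic. Concretely, I will show that the relevant Cauchy sets are achronal and chronologically observing, which yields $d_J(S,S)=0$ and $d_J(S,S')=0\Rightarrow S=S'$. I will also show that they are weakly timelike intercepting in the sense of Definition~\ref{dfn:weak timelike-path-obser}. Achronality and chronological observation come directly from Lemma~\ref{lem:cauchy_is_achronal_and_chro_obs}. Geodesic implies causally path-connected, so part (i) of that lemma applies to $\Cauchy^s_X$. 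In the causal-curve case we only get achronality, so $d_J(S,S)=0$ holds but definiteness may fail; this explains the pseudometric conclusion. In the timelike case chronological observation is also available, so we get a metric. For (ii), part (ii) of the lemma applies via timelike path-connectedness.

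The key step is weak timelike interception. Take $x\ll z$ and a maximal causal curve $\sigma$ from $x$ to $z$, which exists because $X$ is geodesic. Any point $y$ on $\sigma$ satisfies condition (ii) of Definition~\ref{dfn:weak timelike-path-obser}, since maximality gives $d(x,z)=d(x,y)+d(y,z)$. So it suffices to extend $\sigma$ to an inextendible curve of the type the Cauchy set must meet, and to observe where it meets $S$:
\begin{itemize}
\item if the meeting point lies on the future extension, it satisfies $z\le y$;
\item if it lies on the past extension, it satisfies $y\le x$;
\item if it lies on $\sigma$ itself, it satisfies condition (ii).
\end{itemize}

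In (i) we must concatenate: a past inextendible causal curve ending at $x$, then $\sigma$, then a future inextendible causal curve starting at $z$. These pieces are obtained by restricting the inextendible curves through $x$ and $z$, as in the proof of Corollary~\ref{cor:inex_timelike}. One has to check that the concatenation is inextendible, which holds because its ends are those of inextendible curves. In the timelike case of (i), the concatenation is only causal. However, strong Cauchy sets meet all inextendible causal curves, so the causal concatenation suffices there too.

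The main obstacle is (ii), for $\Cauchy_X$ and $\Cauchy^w_X$. Here only inextendible timelike (resp.\ Cauchy complete) curves are guaranteed to hit $S$, and $\sigma$ need not be timelike. I would avoid needing $\sigma$ timelike as follows. Pick inextendible timelike (Cauchy complete) curves $\gamma_x$ through $x$ and $\gamma_z$ through $z$. Using timelike path-connectedness, build timelike curves $\tau_j$ that run along $\gamma_x$ up to a point $x_j\ll x$, then timelike from $x_j$ to $z_j$ with $z\ll z_j$ on $\gamma_z$, then along $\gamma_z$. Take $x_j\to x$ and $z_j\to z$. The two tail segments keep each $\tau_j$ inextendible, and in the weak case Cauchy complete. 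Each $\tau_j$ meets $S$ in some $y_j$.
\begin{itemize}
\item If infinitely many $y_j$ lie on the past tail, they satisfy $y_j\le x$.
\item If infinitely many lie on the future tail, they satisfy $z\le y_j$.
\item Otherwise $y_j\in J(x_1,z_1)$, which is compact.
\end{itemize}
In the third case, first countability lets me pass to a convergent subsequence $y_j\to y$. Closedness of $S$ (Lemma~\ref{lem:prop_cauchy_set}) gives $y\in S$. I then need $x\le y\le z$, and this is the delicate point: it requires closedness of $J$. The alternative is to choose each middle segment near-maximal, so that $d(x_j,y_j)+d(y_j,z_j)\ge d(x_j,z_j)-1/j$, and then pass to the limit by continuity of $d$ to get $d(x,z)\le d(x,y)+d(y,z)$. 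Combined with the reverse triangle inequality, this gives equality, hence condition (ii) of Definition~\ref{dfn:weak timelike-path-obser}. If $x\le y\le z$ is hard to secure directly, I would fall back on $d(x,y)>0$ or $d(y,z)>0$ to recover the causal relations.
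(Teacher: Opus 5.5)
Your reduction to weak timelike interception via Lemmas~\ref{lem:separation}, \ref{lem:tri_ineq} and \ref{lem:cauchy_is_achronal_and_chro_obs}, and your argument for (i), agree with the paper. Part (ii), however, has a genuine gap at exactly the step you call delicate. You need a timelike curve from $x_j$ to $z_j$ such that whichever of its points $y_j$ lies in $S$ satisfies $d(x_j,y_j)+d(y_j,z_j)\ge d(x_j,z_j)-1/j$. ``Choose the middle segment near-maximal'' does not supply one. Without regularity, the hypotheses give a maximal \emph{causal} curve and \emph{some} timelike curve, but nothing says timelike curves have length close to $d(x_j,z_j)$.

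The missing idea, which is the core of the paper's proof, is to get the estimate pointwise rather than from total length. Subdivide the maximal causal curve $\gamma$ from $x$ to $z$ at $t_0<\dots<t_n$ so that each piece has positive length at most $\varepsilon$; this is possible via the continuous nondecreasing function $t\mapsto d(x,\gamma(t))$. Positivity makes consecutive points timelike related, so you can join each pair $\gamma(t_i)\ll\gamma(t_{i+1})$ by a timelike curve and concatenate. If $S$ is met at $y_\varepsilon$ on the $i$-th replacement, then $\gamma(t_i)\le y_\varepsilon\le\gamma(t_{i+1})$. Maximality of $\gamma$ and the reverse triangle inequality then give
\[
d(x,z)=d(x,\gamma(t_i))+d(\gamma(t_i),\gamma(t_{i+1}))+d(\gamma(t_{i+1}),z)\le d(x,y_\varepsilon)+\varepsilon+d(y_\varepsilon,z),
\]
however short the replacement curves are.

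Your second difficulty, obtaining $x\le y\le z$ in the limit, is created by the perturbation to $x_j\ll x$ and $z_j\gg z$. That perturbation is unnecessary, and it is unavailable if $x\in\partial^-_{ch}X$ or $z\in\partial^+_{ch}X$. Timelike curves sharing an endpoint concatenate to a timelike curve by transitivity of $\ll$, so you can attach the past part of $\gamma_x$ and the future part of $\gamma_z$ directly at $x$ and $z$. Then every relevant $y_\varepsilon$ lies in $J(x,z)$. Compactness of $J(x,z)$ together with first countability yields a limit $y\in J(x,z)$, and $y\in S$ by closedness of $S$. No closedness of $J$ is needed, which matters because it is not assumed in (ii).

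As written, your argument is circular at this point. Concluding equality from the reverse triangle inequality already presupposes $x\le y\le z$. Your fallback via $d(x,y)>0$ or $d(y,z)>0$ only guarantees one of the two relations, and leaves the other unproved.
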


The proof shows that in (ii) we could assume regular instead of first-countable, timelike path-connected, and compact causal diamonds.

\begin{proof}
By Lemma~\ref{lem:tri_ineq} and Lemma~\ref{lem:cauchy_is_achronal_and_chro_obs},
we are left to verify in (i), resp.~in (ii), that a strong Cauchy set, resp.~a (weak) Cauchy set, $S$ in $X$ is weakly timelike intercepting. To this end, we consider $x,z \in X$ with $x \ll z$. As $X$ is geodesic, there is a maximal causal curve $\gamma:[0,1]\rightarrow X$ from $x$ to $z$, i.e.~$L(\gamma) = d(x,z)$.

To prove (i), we extend $\gamma$ using the assumption to an inextendible causal curve $\hat \gamma$, which has to intersect $S$ in some point, say in $y\in S$. If neither $z\leq y$ nor $y\leq x$ holds, then we have $x \leq y \leq z$ as $x,y,z$ lie on a causal curve.  In this case we write $\gamma$ as a concatenation of its part $\gamma^1$ from $x$ to $y$ and its part $\gamma^2$ from $y$ to $z$. Then
\[
  d(x,z) = L(\gamma) = L(\gamma^1) + L(\gamma^2) \leq 
  d(x,y) + d(y,z) \, ,
\]
so, by the reverse triangle inequality, $S$ is weakly timelike intercepting.

To deduce (ii), we now refine the above argument to show that under the additional assumptions also every (weak) Cauchy set is weakly timelike intercepting. As the Lorentzian distance is continuous and $d(x,z)>0$, we can choose for any $\varepsilon>0$ a subdivision $0=t_0<t_1<\ldots<t_n=1$ such that the restriction of $\gamma$ to $[t_i,t_{i+1}]$ has positive Lorentzian length at most $\varepsilon$ for all $i=0,\ldots,n-1$. For such $i$ let $\gamma_i:[t_i,t_{i+1}]\rightarrow X$ be a timelike curve with $\gamma_i(t_i)=\gamma(t_i)$ and $\gamma_i(t_{i+1})=\gamma(t_{i+1})$, which exists by timelike path-connectedness of $X$, and let $\gamma_{*}$ be the concatenation of the $\gamma_i$. The (Cauchy complete) inextendible timelike curve $\hat \gamma_*$ constructed from $\gamma_*$ as before intersects a (weak) Cauchy set $S$ in some point $y_\varepsilon \in J(x,z)$. By the argument above we can assume that $\gamma_*$ intersects $S$, say in $\gamma_j(t)$, $j\in \{0,\ldots,n-1\}$, $t\in [t_j,t_{j+1}]$. Let $\gamma^1$ be the concatenation of $\gamma_{|[0,t_j]}$ and ${\gamma_j}_{|[t_j,t]}$ and let $\gamma^2$ be the concatenation of ${\gamma_j}_{|[t,t_{j+1}]}$ and $\gamma_{|[t_{j+1},1]}$. Then
\[
  d(x,z) = L(\gamma) \leq L(\gamma^1) + L(\gamma^2) + \varepsilon \leq 
  d(x,y_\varepsilon) + d(y_\varepsilon,z) + \varepsilon\, ,
\]
By compactness of the causal diamond $J(x,z)$ and first countability of $X$, a subsequence of the $y_\varepsilon$ converges to some $y \in J(x,z)$ as $\varepsilon \rightarrow 0$ with $y \in S$ since $S$ is closed by Lemma~\ref{lem:prop_cauchy_set}. Continuity of the Lorentzian distance further implies that  $d(x,z) \leq d(x,y) + d(y,z)$ and the claim follows as before. 
\end{proof}

We obtain the following corollary of Proposition~\ref{prp:conditions_ext_metric} for some more specific notions of Lorentzian pre-length spaces. Regarding (iii) below, recall that a metric space is separable if and only if it is second-countable. 

\begin{cor}\label{cor:metric_corollary} Suppose that one of the following three conditions holds for a separable Lorentz\-ian pre-length space $X$.
\begin{enumerate}
    \item $X$ is a first-countable BMS-Lorentzian length space with compact causal diamonds, empty bubbling boundary, empty spacelike boundary, and timelike extensions.
    \item $X$ is a globally hyperbolic BM-length metric spacetime. 
    \item $X$ is a globally hyperbolic KS-Lorentzian length space.
\end{enumerate}
Then $(\Cauchy^s_X,d_J)$ is a non-empty extended metric space. If $X$ is in addition timelike path-connected (which it is in case (iii) by definition), then $(\Cauchy_X,d_J)$ is an extended metric space. If in addition through every point of $X$ there exists a Cauchy complete timelike curve, then $(\Cauchy^w_X,d_J)$ is an extended metric space. 
\end{cor}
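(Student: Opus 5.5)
The plan is to reduce everything to Proposition~\ref{prp:conditions_ext_metric}, together with the existence results for Cauchy sets and inextendible curves from Sections~\ref{sec:inext} and~\ref{subsec:Cauchy_sets}. The first task is to check that in each of the cases (i)--(iii) the space $X$ is a geodesic, first-countable, causally simple Lorentzian pre-length space with finite continuous Lorentzian distance, compact causal diamonds, empty bubbling boundary and timelike extensions.

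In case (i) these properties are mostly hypotheses. The BMS-Lorentzian length space is geodesic by definition, and $d$ is continuous. It is causally simple with respect to $J_d$ by Subsection~\ref{sub:BMS_spaces}.

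In case (ii), Subsection~\ref{sub:BM_spaces} shows that a globally hyperbolic BM-length metric spacetime is a BMS-Lorentzian length space with empty chronological boundary, so in particular empty bubbling and spacelike boundary. It is first-countable by definition and geodesic. It has timelike extensions because every point is the midpoint of a timelike curve.

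In case (iii), Subsection~\ref{sub:ks_spaces} identifies $X$ with a BMS-Lorentzian length space with $J_d=J$ and empty chronological boundary. It is metric-compatible, timelike path-connected and geodesic, so case (iii) reduces to the same situation.

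With this in hand, Corollary~\ref{cor:inex_timelike} provides an inextendible timelike curve through every point of $X$. In the KS-case this uses metric-compatibility; otherwise it uses finiteness and continuity of $d$.

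Non-emptiness of $\Cauchy^s_X$ is where I would spend most care. I would invoke the extension of Minguzzi's Cauchy time function construction described in Appendix~\ref{app:time_functions}, i.e.\ Theorem~\ref{thm:cauchy_time_functions_exist}. It applies to separable, first-countable Lorentzian metric spaces with compact causal diamonds, empty bubbling boundary and empty spacelike boundary. Any level set of such a time function is a strong Cauchy set.

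For the KS-case I would remark that, by Remarks on Definition~\ref{dfn:inext_def}, (a) and (b), continuous and locally Lipschitz inextendibility agree. Hence this set is also strong in the KS-sense.

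For the metric statement on $\Cauchy^s_X$, I would apply Proposition~\ref{prp:conditions_ext_metric}, (i), in its timelike version, using the inextendible timelike curves just obtained. Strictly speaking, (i) needs inextendible \emph{timelike} curves only for definiteness via Lemma~\ref{lem:cauchy_is_achronal_and_chro_obs}, (i). That lemma also requires inextendible causal curves for achronality, and these are supplied by the same timelike curves.

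Under the extra assumption of timelike path-connectedness, Proposition~\ref{prp:conditions_ext_metric}, (ii), applies directly. Its hypotheses are: first-countable, continuous $d$, compact diamonds, and inextendible timelike curves through every point. This gives that $(\Cauchy_X,d_J)$ is an extended metric space. If moreover Cauchy complete timelike curves pass through every point, the bracketed version of (ii) yields the statement for $\Cauchy^w_X$.

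The main obstacle I expect is the bookkeeping in verifying the hypotheses of Theorem~\ref{thm:cauchy_time_functions_exist} and of Corollary~\ref{cor:inex_timelike} uniformly across the three settings. Two points need particular care. First, causal simplicity in case (i) has to be taken with respect to the maximal causal relation $J_d$. Second, the KS-interpretation of curves must be checked to leave the Cauchy-set notions unchanged.
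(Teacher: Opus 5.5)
Your proposal is correct and matches the paper's argument. In each case you check that $X$ is geodesic and causally simple, with finite continuous $d$, compact diamonds, timelike extensions and no bubbling or spacelike boundary. You then get inextendible timelike curves from Corollary~\ref{cor:inex_timelike}, apply Proposition~\ref{prp:conditions_ext_metric}, and obtain non-emptiness from Theorem~\ref{thm:cauchy_time_functions_exist}. The only hypothesis you left implicit is that this theorem also requires causal path-connectedness, which holds because $X$ is geodesic in all three cases.
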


In all the cases of $X$ above, $X$ is geodesic, causally simple and it has a finite, continuous Lorentzian distance, timelike extensions, and neither bubbling nor spacelike boundary; see Subsection~\ref{sub:ms_spaces},~\ref{sub:BMS_spaces},~\ref{sub:BM_spaces}, and~\ref{sub:ks_spaces}  for references for the respective properties. Moreover, $X$ is by definition  metric-compatible in case (iii). Hence, the existence of an inextendible timelike curve through every point is guaranteed by Corollary~\ref{cor:inex_timelike}, and Proposition~\ref{prp:conditions_ext_metric} is applicable. The fact that the respective space of strong Cauchy sets is non-empty follows from the discussion in Subsection~\ref{subsec:Cauchy_sets}; see also Appendix~\ref{app:time_functions}. Note that compactness of causal diamonds in (i) is required  for Proposition~\ref{prp:conditions_ext_metric} and to guarantee non-emptiness of $(\mathcal {C}^s_X, d_J)$ via Theorem~\ref{thm:cauchy_time_functions_exist}.

Note further that the assumptions in (i) are in particular satisfied for a countably generated BMS-Lorentzian length space with timelike extensions; cf.~\cite[Theorem~4]{Mi26}.
If we only assume the BMS-Lorentzian length space to be countably generated in (i), then $(\Cauchy^s_X,d_J)$ is still a (non-empty by \cite[Theorem 21]{Mi26}) extended pseudometric space, which follows from Proposition~\ref{prp:conditions_ext_metric}, (i), as there exists an inextendible causal curve through every point by \cite[Proposition~20]{Mi26}, and by  causal path-connectedness of $X$.

\begin{qst}
Is there an example of a countably generated BMS-Lorentzian length space for which $(\mathcal{C}^s_X, d_J)$ is not an extended metric space?
\end{qst}

With an additional compactness assumption, we actually obtain (finite) metric spaces.

\begin{cor}\label{cor:compact_finite} 
If in the situation considered in Corollary~\ref{cor:metric_corollary} all (strong resp.~weak)  Cauchy sets are compact, then the respective extended metric space is a metric space.
\end{cor}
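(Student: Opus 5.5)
The task is to show that $d_J(A,B)<\infty$ for compact (strong resp.~weak) Cauchy sets $A,B$. Corollary~\ref{cor:metric_corollary} already provides an extended metric, so finiteness is the only property left to verify.

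In each of the three cases of Corollary~\ref{cor:metric_corollary}, $X$ has a finite, continuous Lorentzian distance $d$; this is recorded in the discussion following that corollary. I will therefore use the continuous map $d_J\colon X\times X\to[0,\infty)$, $(x,y)\mapsto d(x,y)+d(y,x)$. It is finite-valued because $d$ is finite, and it is continuous because $d$ is continuous with respect to the product topology.

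For compact $A,B\subset X$, the product $A\times B$ is compact in the product topology by Tychonoff, or directly for two factors. A continuous real-valued function attains a finite maximum on a compact set. If $A$ and $B$ are non-empty, this gives
\[
 d_J(A,B)=\sup_{(x,y)\in A\times B} d_J(x,y)=\max_{(x,y)\in A\times B} d_J(x,y)<\infty .
\]
If one of them is empty, the supremum is $0$ by convention. This settles all three variants at once, since strong and weak Cauchy sets are handled identically once compactness is assumed.

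The only points that need care are bookkeeping. First, finiteness and continuity of $d$ must be confirmed in each case. In case~(i) they follow from the BMS axioms, where $d$ takes values in $[0,\infty)$ and is continuous. Case~(ii) reduces to the BMS setting as in Subsection~\ref{sub:BM_spaces}. In case~(iii) they follow from \cite[Theorem~3.28 and 3.30]{KS18}. Second, the metric axioms themselves are inherited unchanged from Corollary~\ref{cor:metric_corollary}. I do not expect any real obstacle here.
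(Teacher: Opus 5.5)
Your proposal is correct and follows essentially the same route as the paper, whose proof simply notes that the Lorentzian distance is continuous in all settings of Corollary~\ref{cor:metric_corollary}, so that $d_J$ is finite on compact Cauchy sets. You make explicit the extreme-value argument on the compact set $A\times B$ and the case-by-case check of finiteness and continuity of $d$, both of which the paper leaves implicit.
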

\begin{proof}
In all the situations considered in  Corollary~\ref{cor:metric_corollary}, the Lorentzian distance is continuous. Therefore, finiteness of the metric $d_J$ follows from compactness of the Cauchy sets.
\end{proof}

Note that in a globally hyperbolic Lorentzian manifold all Cauchy hypersurfaces are homeomorphic; see \cite[Property 7]{Ge70}.

\subsection{Completeness properties}
In view of Lemma~\ref{lem:cauchy_graph}, we cannot expect that a space of strong Cauchy sets is complete. The following example shows that without completeness assumptions on a globally hyperbolic spacetime also the space of Cauchy sets $(\Cauchy_X,d_J)$ is in general not complete.

\begin{exl}\label{exl:non_geod_complete_examples} Given a splitting $\Lo^{1+1}=\R \times \R$ into a timelike subspace and a spacelike subspace, the Lorentzian manifold $M = (0,1)\times \R \subset  \Lo^{1+1}$ is globally hyperbolic and the time slice $S_t=\{t\} \times \R$ is a Cauchy hypersurface in $M$ for every $t\in (0,1)$ with $d_J(S_t,S_s)=|t-s|$. Hence, $(S_{1/j})_{j\in \N}$ is a Cauchy sequence in $(\Cauchy_M,d_J)$ without a limit in $(\Cauchy_M,d_J)$. Since $M$ is neither timelike geodesically complete nor timelike Cauchy complete, we see that these assumptions are necessary in Theorem~\ref{thm:metric_properties}, (i) resp.~(ii). In fact, in this example there are no Cauchy complete inextendible timelike geodesics, hence every subset is a weak Cauchy set, and so $(\mathcal{C}^w_M,d_J)$ is not even an extended (pseudo)metric space.
\end{exl}

However, under an additional completeness assumptions on the spacetime, we can guarantee completeness of the space of (weak) Cauchy hypersurfaces. Note that by Corollary~\ref{cor:inex_timelike} the assumptions of the following proposition imply that through every point of $X$ there runs an inextendible timelike curve and that such an inextendible timelike curve is timelike Cauchy complete if $X$ satisfies Condition B.

\begin{prp} \label{prp:conditions_compl_metric} Let $X$ be a first-countable, timelike path-connected, geodesic, causally simple (metric-compatible KS-)Lorentzian pre-length space with a finite, continuous Lorentzian distance, compact causal diamonds, and empty bubbling boundary. If there exists a Cauchy complete timelike curve through every point of $X$, then the extended metric space $(\mathcal C_X^w,d_J)$ is complete. In particular, if $X$ satisfies Condition B, 
then $(\Cauchy_X,d_J)=(\Cauchy^w_X,d_J)$ is complete. 
\end{prp}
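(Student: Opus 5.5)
The plan is to construct the limit of a Cauchy sequence directly as a ``limit set'', and to check the weak Cauchy property curve by curve. Let $(S_j)_{j\in\N}$ be a Cauchy sequence in $(\Cauchy^w_X,d_J)$. By Lemma~\ref{lem:cauchy_is_achronal_and_chro_obs}~(ii) and the proof of Proposition~\ref{prp:conditions_ext_metric}~(ii), every $S_j$ is achronal, chronologically observing and weakly timelike intercepting. I would define
\[
S\coloneqq\{x\in X \mid d_J(x,S_j)\to 0 \text{ as } j\to\infty\}.
\]
The basic tool is a point-to-set version of Lemma~\ref{lem:pre_time_path_observ}. For $x\ll y$ and any weakly timelike intercepting set $A$ there is $w\in A$ with
\[
d(x,y)\le d_J(x,w)+d_J(w,y)\le d_J(x,A)+d_J(A,y).
\]
Combining this with the definition of $d_J$ gives $d_J(x,S_k)\le d_J(x,S_j)+d_J(S_j,S_k)$ for all $x$. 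Two consequences follow at once.
\begin{enumerate}
\item For $x\in S$ we get $d_J(x,S_k)\le \limsup_j d_J(S_j,S_k)$, so $\sup_{x\in S}d_J(x,S_k)\to 0$.
\item $S$ is achronal. Indeed, if $x\ll x'$ in $S$, applying the estimate to the pair $(x,x')$ with $A=S_k$ gives $d(x,x')\le d_J(x,S_k)+d_J(S_k,x')\to 0$, a contradiction. In particular, no Cauchy complete timelike curve meets $S$ twice.
\end{enumerate}

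The next step is to show that every Cauchy complete timelike curve $\gamma\colon I\to X$ meets $S$. Write $\gamma(t_j)$ for the unique intersection of $\gamma$ with $S_j$. Since $\gamma$ is timelike, $d_J(\gamma(t_j),\gamma(t_k))\le d_J(S_j,S_k)$, so the points $\gamma(t_j)$ are ``$d$-Cauchy'' along $\gamma$. I would pass to a monotone subsequence of $(t_j)$.
\begin{itemize}
\item If it increases to $\sup I$, then $d(\gamma(t_{j_0}),\gamma(t))$ is bounded on $[t_{j_0},\sup I)$ by monotonicity of $t\mapsto d(\gamma(t_{j_0}),\gamma(t))$. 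Future Cauchy completeness then forces $\sup I\in I$.
\item The past case is analogous.
\end{itemize}
Hence in every case $t_j\to t^*\in I$ along the subsequence, and $\gamma(t_j)\to x\coloneqq\gamma(t^*)$ by continuity. The Cauchy property shows that the full sequence $(t_j)$ has the same limit $t^*$.

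It then remains to prove $x\in S$, i.e.\ $d_J(x,S_j)\to 0$, and this is the step I expect to be the main obstacle. It needs uniform control of $d(x,z)$ and $d(z,x)$ over all $z\in S_j$, not just $z=\gamma(t_j)$. For $z\in S_j$ with $\gamma(t_j)\le x\ll z$, achronality of $S_j$ and the reverse triangle inequality give $d(x,z)\le d(\gamma(t_j),z)-d(\gamma(t_j),x)\le 0$, which is a contradiction, so this configuration is harmless. The dangerous case is $x\le\gamma(t_j)$ with $x\ll z$. There I would argue by contradiction: suppose $z_j\in S_j$ with $d(x,z_j)\ge\delta>0$. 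I would pick $k$ large with $d_J(S_j,S_k)<\delta/3$ and push the pair $(\gamma(t_k),z_j)$ or $(x,z_j)$ through the intercepting estimate for $S_k$. I would then use $\gamma(t_k)\to x$ together with continuity of $d$ to get $d(x,z_j)\le d_J(x,S_k)+\delta/3$, and bound $d_J(x,S_k)$ by the already controlled quantities plus $o(1)$. If this local bookkeeping does not close, I would fall back on compactness of causal diamonds and first-countability: a subsequence of the $z_j$ (localized in a diamond $J(p,q)$ with $p\ll x\ll q$ on $\gamma$) converges, and continuity of $d$ combined with the Cauchy property produces the contradiction.

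Once $x\in S$ is established, $S\in\Cauchy^w_X$ and consequence (1) of the first paragraph gives $d_J(S_k,S)\to 0$, since the reverse inclusion $S_k\subset N(S,\varepsilon)$ follows symmetrically from the chronologically observing property and the points just constructed. Finally, under Condition~B, Corollary~\ref{cor:weak cauchy_is_cauchy} yields $\Cauchy_X=\Cauchy^w_X$. Moreover, Corollary~\ref{cor:inex_timelike} supplies an inextendible, hence Cauchy complete, timelike curve through every point, so the general statement applies.
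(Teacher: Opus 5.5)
Your construction is viable, and one part of it is cleaner than the paper. You show the limit set is achronal by applying Lemma~\ref{lem:pre_time_path_observ} to the weakly timelike intercepting sets $S_k$, which gives $d(x,x')\le d_J(x,S_k)+d_J(S_k,x')\to 0$. This replaces the paper's last paragraph, which shows that both intersection points are limits of points of the $S_j$ (with a separate treatment of the chronological boundary) and then builds a Cauchy complete timelike curve through $x_{j_0}\ll y_{j_0}$. However, the step you flagged as the main obstacle, $x=\gamma(t^*)\in S$, is not proved by what you write. The bound $d(x,z_j)\le d_J(x,S_k)+\delta/3$ is circular, since smallness of $d_J(x,S_k)$ is exactly the statement $x\in S$. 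The compactness fallback is not justified either: nothing you have shown places the points $z_j\in S_j$ with $d(x,z_j)\ge\delta$ in a fixed diamond $J(p,q)$, and for non-compact $S_j$ they may a priori lie far from $\gamma$. You also do not say what contradiction a limit point would give.

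The missing observation is that $d_J(A,B)$ is a supremum over all pairs, so no intercepting estimate and no case distinction are needed. For every $z\in S_k$ you have directly $d_J(\gamma(t_j),z)\le d_J(S_j,S_k)$. Letting $j\to\infty$ along your subsequence, continuity of $d$ gives $d_J(x,z)\le\limsup_j d_J(S_j,S_k)$ uniformly in $z\in S_k$, hence $d_J(x,S_k)\to 0$. This is precisely the paper's argument for $d_J(S,S_k)\to 0$. The paper sidesteps the issue by defining $S$ as the set of accumulation points of sequences $x_l\in S_{j_l}$, so that $\gamma(t^*)\in S$ is immediate and the continuity argument is done once for all of $S$. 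With this fix your proof is complete. For the same reason, your closing remark about a \emph{reverse inclusion} $S_k\subset N(S,\varepsilon)$ is superfluous: $d_J(S,S_k)=\sup_{x\in S}d_J(x,S_k)$, so your consequence (1) already is the convergence $d_J(S_k,S)\to 0$.
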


Recall that Condition B follows from timelike Cauchy completeness or finite compactness (see Theorem~\ref{thm:complete}), and that under the assumptions of the proposition $(\mathcal{C}_X,d_J)$ resp.~$(\mathcal{C}^w_X,d_J)$ is an extended metric space by Proposition \ref{prp:conditions_ext_metric}, (ii).

\begin{proof}
The second statement is a consequence of the first statement and Corollary~\ref{cor:weak cauchy_is_cauchy}. For the first statement, we consider a Cauchy sequence $(S_j)_{j\in \N}$ in $(\Cauchy^w_X,d_J)$. Let $S$ denote the set of all points $x\in X$ such that for any neighborhood $U$ of $x$ in $X$ it holds that $U\cap S_j \neq \emptyset$ for infinitely many $j$. We claim that $d_J(S_j, S) \rightarrow 0$ and that $S$ is a weak Cauchy set.

For the first claim, we fix $\varepsilon >0$ and choose $N\in \N$ such that $d_J(S_j,S_k) < \varepsilon$ for all $j,k \geq N$. 
By construction, for any $x \in S$ there is a sequence $x_{l} \in S_{j_l}$ 
with $x_{l} \to x$ for $l \to \infty$. By assumption we have $d_J(x_{l},y) < \varepsilon$ for all $y\in S_k$ with $j_l,k \geq N$. Continuity of the Lorentzian distance implies that $d_J(x,y)\leq\varepsilon$ for all $x\in S$ and all $y\in S_k$ with $k \geq N$. Hence, $d_J(S,S_k) \leq \varepsilon$ for $k \geq N$, and thus the first claim follows. 

To prove that $S$ is a weak Cauchy set, we first show that $S$ is intersected by every Cauchy complete timelike curve. Let $\gamma :I \rightarrow X$ be such a curve. Let $t_j \in I$ be such that $\gamma(t_j) \in S_j$. After reparametrization we can assume that $t_1=0$ and $d_J(\gamma(0),\gamma(t))=|t|$. By Cauchy completeness of $\gamma$, the interval $I$ is closed in $\R$. Since $S_j$ is a Cauchy sequence, the sequence $d_J(S_1,S_j)$ is bounded. As  $|t_j| 
= d_J(\gamma(0), \gamma(t_j)) \leq d_J(S_1,S_j)$, also the sequence $(t_j)_{j\in \N}$ is bounded. Hence, a subsequence converges to some $t_*\in I$. By construction we have $\gamma(t_*) \in S$.

Finally, we show that every Cauchy complete timelike curve intersects $S$ at most once. Suppose that there is a Cauchy complete timelike curve $\gamma:I\rightarrow X$ that intersects $S$ in two distinct points $x \ll y$. We can suppose that $0\in I$ and $\gamma(0)=y$. By passing to a subsequence, we can assume that there is a sequence of points $x_{j} \in S_{j}$ with $x_{j} \rightarrow x$ as $j\to\infty$. Suppose that there is some neighborhood of $y$ that is not intersected by infinitely many $S_j$. Then there is a neighborhood $U$ of $y$ disjoint from all $S_j$. As $X$ is geodesic, it has causal extensions. Hence, it also has timelike extensions by Corollary~\ref{cor:inex_timelike}. We first suppose that $y$ is not contained in the chronological boundary of $X$. As $X$ has timelike extensions and $\gamma$ is inextendible, there is some $\varepsilon>0$ with $[-\varepsilon,\varepsilon]\subset I$ and $\gamma([-\varepsilon,\varepsilon]) \subset U$. Set $\delta \coloneqq \min \{d_J(y,\gamma(\varepsilon)),d_J(\gamma(-\varepsilon),y)\}$ and let $t_j \in I$ be such that $\gamma(t_j) \in S_j=S_j \backslash U$. By the reverse triangle inequality, we find that $d_J(S,S_j) \geq d_J(y,\gamma(t_j)) \geq \delta$ for all $j$, in contradiction to $d_J(S,S_j)\rightarrow 0$. The case $y\in \partial_{ch}^+X$ is treated analogously by noting that $\sup I =0$ in this case and considering curves on intervals $[-\varepsilon, 0]\subset I$. The case $y\in \partial_{ch}^-X$ cannot occur as $x \ll y$. 
 Hence, after passing to another subsequence, there are points $y_{j} \in S_{j}$ with  $y_{j} \rightarrow y$ as $j \rightarrow \infty$. By continuity of $d$, we can thus find some $j_0$ such that $x_{j_0} \ll y_{j_0}$. 
As $X$ is assumed to be timelike path-connected, the points $x_{j_0}$ and $ y_{j_0}$ can be connected by a timelike curve. By concatenation with a future and a past Cauchy complete inextendible timelike curve, we obtain a Cauchy complete timelike curve through $x_{j_0}$ and $ y_{j_0}$, a contradiction.
\end{proof}

The proposition immediately applies in the more specific settings of Corollary~\ref{cor:metric_corollary}. For instance, we see that the space of Cauchy sets in a globally hyperbolic KS-Lorentzian length space satisfying Condition B is complete.

\subsection{Geodesics in the space of Cauchy sets}\label{sub:geodesics} 

In this subsection we assume that 
$X$ is a first-countable, causally simple, timelike path-connected, geodesic (metric-compatible KS-)Lorentzian pre-length space with a finite, continuous Lorentzian distance, compact causal diamonds, no bubbling boundary, and the property that it satisfies Condition B; cf.~Proposition~\ref{prp:conditions_compl_metric}. Timelike Cauchy complete, globally hyperbolic Lorentzian manifolds, and more generally timelike Cauchy complete, globally hyperbolic KS-Lorentzian length spaces, are examples of such spaces satisfying all the conditions above. Under these assumptions, there is an inextendible timelike curve through every point of $X$ by Corollary~\ref{cor:inex_timelike}. Therefore, also the assumptions of Lemma~\ref
{lem:prop_cauchy_set} are satisfied, and $d_J$ defines a metric on the space of Cauchy sets in $X$ by Proposition~\ref{prp:conditions_ext_metric}, (ii). We collect these assumptions and refer to them as \textit{standing assumptions} on the Lorentzian pre-length space $X$ in the following. 

Our goal is to show that the space of Cauchy sets is a geodesic metric space in the above setting and to relate certain geodesic rays in it to Cauchy time functions of $X$. To this end, for a Cauchy set $S$ in $X$ and $t\in \R$, we define a \textit{$t$-parallel set} $S_t$ of $S$ in $X$ to be
\[
    S_t \coloneqq \{ x \in J^{\mathrm{sign}(t)}(S) \mid d_J(x,S) = |t| \} \cup \{x \in \partial^{\mathrm{sign}(t)}X \mid d_J(x,S) < |t|\}
\] for $t\neq 0$ and as $S_0\coloneqq S$ for $t=0$. The second part of this union guarantees that inextendible curves that cease to exist at the chronological boundary $\partial X$ of $X$ still hit $S_t$. In fact, under suitable assumptions, these parallel sets are again Cauchy sets.

\begin{lem}\label{lem:parallel_cauchy} Assume the standing assumptions on a Lorentzian pre-length space $X$ as given at the beginning of this subsection. Let $S$ be a Cauchy set in $X$. If $d_J(S,\cdot)$ is continuous, then the parallel sets $S_t$ are Cauchy sets for all $t\in \R$.
\end{lem}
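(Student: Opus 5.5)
We fix $t>0$ (the case $t<0$ is symmetric and $t=0$ is trivial) and an inextendible timelike curve $\gamma:I\to X$. We show that $\gamma$ meets $S_t$ exactly once by studying the function $f(u)\coloneqq d_J(\gamma(u),S)$ along $\gamma$. The plan is to show that $f$ is continuous and strictly increasing on the part of $\gamma$ to the future of $S$, and that it either becomes unbounded or $\gamma$ ends on $\partial^+X$.

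\emph{Setup.} Since $S$ is Cauchy, $\gamma$ meets $S$ in exactly one point; we parametrize so that $0\in I$ and $\gamma(0)\in S$. By Lemma~\ref{lem:prop_cauchy_set}, $\gamma(u)\in I^+(S)$ for $u>0$ and $\gamma(u)\in I^-(S)$ for $u<0$, and $J^+(S)=S\cup I^+(S)$ is disjoint from $I^-(S)$. For $x\in J^+(S)$ and $y\in S$ we have $d(x,y)=0$, since otherwise $y\in I^+(x)\subset I^+(S)$, contradicting $S\cap I^+(S)=\emptyset$. Hence $f(u)=\sup_{y\in S}d(y,\gamma(u))$ for $u\ge0$.

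\emph{Monotonicity.} Let $0\le u<v$. For every $y\in S$ with $y\le\gamma(u)$, the reverse triangle inequality gives $d(y,\gamma(v))\ge d(y,\gamma(u))+d(\gamma(u),\gamma(v))$. The remaining $y$ contribute $d(y,\gamma(u))=0$ anyway. Taking suprema yields
\[
f(v)\ \ge\ f(u)+d(\gamma(u),\gamma(v))\ >\ f(u).
\]
So $f$ is strictly increasing on $I\cap[0,\infty)$, with $f(0)=0$, and it is continuous by the hypothesis on $d_J(S,\cdot)$.

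\emph{Location of $S_t$ on $\gamma$.} Any point of $S_t\cap\gamma$ lies on $\gamma([0,\infty)\cap I)$. For the first part of $S_t$ this holds because such a point lies in $J^+(S)$. For the second part, a point $x\in\partial^+X$ cannot lie in $I^-(S)$, since then $I^+(x)\neq\emptyset$. Moreover, a point of $\partial^+X=\partial^+_{ch}X$ can lie on the timelike curve $\gamma$ only as an attained future endpoint $\gamma(\sup I)$. By strict monotonicity, at most one $u$ satisfies $f(u)=t$. If in addition the endpoint $\gamma(\sup I)$ lies in $\partial^+X$ with $f(\sup I)<t$, then no $u$ has $f(u)=t$. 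Either way, $\gamma$ meets $S_t$ at most once.

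\emph{Existence, case $\sup I\notin I$.} Here $\gamma$ is future inextendible, so Condition~B makes it future Cauchy complete. Hence $d(\gamma(0),\gamma(u))$ is unbounded, and therefore so is $f(u)\ge d(\gamma(0),\gamma(u))$. By the intermediate value theorem there is a $u$ with $f(u)=t$.

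\emph{Existence, case $\sup I\in I$.} As in the proof of Proposition~\ref{prp:conditions_compl_metric}, $X$ is geodesic and hence has causal and timelike extensions. Since the bubbling boundary is empty, Lemma~\ref{lem:no_bubbling_(t)-inext} places the endpoint in $\partial^+X$. Then either $f$ attains $t$ on $[0,\sup I]$, or $f(\sup I)<t$ and the endpoint belongs to the second part of $S_t$. In both cases $\gamma$ meets $S_t$.

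I expect the main obstacle to be this bookkeeping in the boundary case. There one must check carefully that boundary points of $S_t$ can only occur as endpoints of $\gamma$ and lie in $J^+(S)$, so that the two parts of the union in the definition of $S_t$ never produce two intersection points.
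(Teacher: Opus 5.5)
Your proof is correct and follows the paper's argument. Existence comes from continuity plus the intermediate value theorem, and, when $d_J(\gamma(\cdot),S)$ stays below $t$, from Condition~B forcing an attained endpoint in $\partial^+X$ that lies in the second part of $S_t$; uniqueness comes from the reverse triangle inequality, which you package as strict monotonicity $f(v)\ge f(u)+d(\gamma(u),\gamma(v))$ instead of the paper's direct contradiction $d(x,z)>t$ for two points $y\ll z$ of $S_t$ (your case split on $\sup I\in I$ is only a cosmetic reordering).
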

\begin{proof} Suppose that $t>0$, the case $t<0$ being analogous, and let $\gamma:I\rightarrow X$ be an inextendible timelike curve in $X$.

We first show that $\gamma$ intersects $S_t$. Since $S$ is a Cauchy set, it is intersected by $\gamma$ in a unique point, say, without loss of generality, in $\gamma(0)=x\in S$. If $d_J(\gamma(s),S)\geq t$ for some $s\in I \cap[0,\infty)$, then $\gamma$ intersects $S_t$ by continuity. Hence, we can assume that $d_J(\gamma(s),S) < t$ for all $s\in I \cap[0,\infty)$. Then also $d_J(\gamma(0),\gamma(s)) < t$ for all $s\in I \cap[0,\infty)$. Since $X$ satisfies Condition B, $\gamma$ is Cauchy complete, which implies that $\gamma$ attains a future endpoint $\gamma(\sup I) \in \partial^+X$ with $\sup I \in I$ by inextendibility and $d(\gamma(0),\gamma(\sup I)) \leq t$. In particular, $\gamma$ intersects $S_t$.

 Suppose $\gamma$ intersects $S_t$ twice, say, in $y,z\in S_t$ with $y \ll z$. Then there is some $x\in S$ and some $\lambda \in (0,1)$ with $d(x,y) > t-\lambda d(y,z)>0$. Therefore, we have $x\ll y \ll z$ and 
\[
   d(x,z) \geq d(x,y) + d(y,z) > t
\]
by the reverse triangle inequality, a contradiction. Hence, $S_t$ is a Cauchy set. 
\end{proof}

In the context of closed cone structures, in particular Lorentzian manifolds, continuity of the symmetrized Lorentzian distance function $d_J(S,\cdot)$ is discussed in \cite{Mi20}. We record the following known conditions ensuring continuity of $d_J(S,\cdot)$; cf. \cite[Proposition~1.3]{Mi20}.

\begin{lem}\label{lem:continuity_distance_fct} For a first-countable,  causal Lorentzian pre-length space $X$ with a finite continuous Lorentzian distance and a Cauchy set $S\subset X$, the function $d_J(S,\cdot)$ on $X$ is lower semicontinuous. Moreover, it is continuous under any of the following additional assumptions.
\begin{enumerate}
    \item $S$ is compact.
    \item $X$ is a smooth, globally hyperbolic Lorentzian manifold and $S$ is acausal.
\end{enumerate}
\end{lem}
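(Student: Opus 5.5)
Write $f \coloneqq d_J(S,\cdot)=\sup_{y\in S}d_J(y,\cdot)$. The plan is to treat $f$ as a supremum of continuous functions and get lower semicontinuity for free. The two continuity statements then need an upper semicontinuity argument, which in both cases will come from compactness.

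\emph{Lower semicontinuity.} Since $d$ is finite and continuous, for each fixed $y\in S$ the map $x\mapsto d_J(y,x)=d(y,x)+d(x,y)$ is continuous. A pointwise supremum of continuous functions with values in $[0,\infty]$ is lower semicontinuous, so $f$ is lower semicontinuous. No Cauchy property is needed for this step.

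\emph{Case (i), $S$ compact.} Upper semicontinuity holds because $d_J$ is jointly continuous on $X\times X$ and we take a supremum over a compact set. Concretely, I would take $x_j\to x$ (first-countability allows sequences) and choose $y_j\in S$ with $d_J(y_j,x_j)\ge f(x_j)-1/j$; this is possible since $f(x_j)<\infty$, as continuous functions are bounded on compact sets. Then pass to a subsequence along which $\limsup f(x_j)$ is realized and $y_j\to y\in S$. I need sequential compactness of $S$ here. It follows if $X$ is first-countable and Hausdorff, or metrizable. Otherwise I would run the same argument with a convergent subnet, or use the tube-lemma form: the set $\{x: f(x)<c\}$ is open because, for a compact $S$, $\{x : d_J(y,x)<c\ \forall y\in S\}$ is open by joint continuity. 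I would use the tube lemma to avoid sequential-compactness issues. Joint continuity then gives $\limsup f(x_j)\le d_J(y,x)\le f(x)$.

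\emph{Case (ii), smooth globally hyperbolic, $S$ acausal, possibly noncompact.} This is the main obstacle, because the supremum now runs over a noncompact set. The plan is to localize. Fix $x$, say $x\in I^+(S)\cup S$; the other case is symmetric. For $x'$ near $x$, only points $y\in S$ with $y\le x'$ or $x'\le y$ contribute. Take a compact neighborhood $K$ of $x$ and some $p\in I^+(x)$ with $K\subset I^-(p)$. By achronality of $S$, every contributing $y$ with $y\le x'\in K$ lies in $J^-(K)\cap S$. In a globally hyperbolic manifold, $J^-(K)\cap S$ is compact for $S$ acausal (Cauchy); this is standard, see e.g.\ \cite{On83}. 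Similarly $J^+(K)\cap S$ is compact. So on $K$ the function $f$ equals the supremum of $d_J(y,\cdot)$ over the compact set $(J^-(K)\cup J^+(K))\cap S$, and the tube-lemma argument from case (i) yields upper semicontinuity at $x$.

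The delicate point is to justify this compactness. I would cite the result that, in a globally hyperbolic spacetime, $J^\pm(K)\cap S$ is compact for every compact $K$ and Cauchy hypersurface $S$. This is where smoothness and global hyperbolicity enter, and acausality is what is used in \cite[Proposition~1.3]{Mi20}. If that compactness fails for merely achronal $S$, I would fall back on acausality directly via \cite{Mi20}.
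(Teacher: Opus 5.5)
Your proof is correct. For lower semicontinuity and case (i) it is essentially the paper's argument: the paper picks near-maximizers $q_n\in S$ and extracts a convergent subsequence. You did not need to hedge there, since a compact subset of a first-countable space is sequentially compact without any Hausdorff assumption; your tube-lemma version is equally valid.

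For case (ii) you take a genuinely different route.

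\emph{The paper's route.} It argues with distance-realizing curves. For $p_n\to p$ it takes maximal causal curves from $p_n$ to $S$ of length $d(p_n,S)$ and extends them to inextendible curves parametrized by $h$-arclength. It passes to a limit curve $\gamma$ through $p$ and uses acausality of $S$ to show that the parameters $a_n$ at which the curves hit $S$ converge to the \emph{unique} parameter $a$ at which $\gamma$ hits $S$. This gives $d(p_n,S)\to d(p,\gamma(a))\le d(p,S)$.

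\emph{Your route.} You localize. On a compact neighborhood $K$ of $x$, a point $y\in S$ can give a positive value $d_J(y,x')$ with $x'\in K$ only if $y\ll x'$ or $x'\ll y$. Hence $d_J(S,\cdot)$ agrees on $K$ with $\sup_{y\in C}d_J(y,\cdot)$, where $C=(J^+(K)\cup J^-(K))\cap S$. The tube-lemma argument from case (i), applied to the compact set $C$, then gives upper semicontinuity at every point of $\mathrm{int}\,K$.

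\emph{What each buys.} Your argument avoids maximal curves and the limit curve theorem altogether. It isolates the only geometric input, namely compactness of $J^\pm(K)\cap S$, so it works verbatim in any setting with continuous $d$ where that compactness is available. Acausality enters your argument only through this compactness. That compactness in fact holds for every Cauchy hypersurface of a globally hyperbolic spacetime: a limit curve argument reduces it to the fact that such an $S$ contains no future- or past-inextendible causal curve. So your version does not really need acausality, whereas the paper's proof uses it essentially, for the uniqueness of $a$.

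A few parts of your sketch are superfluous: the auxiliary point $p\in I^+(x)$, the appeal to achronality, and the fallback to \cite{Mi20}.
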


Since parts of the proof in \cite{Mi20} are omitted, we provide further details in Appendix~\ref{app:continuity_Lorentzian_distance} for the convenience of the reader.

When working in the space of acausal Cauchy hypersurfaces of a Lorentzian manifold, it can be useful to know that acausality is inherited by parallel sets. However, since this statement is not central in the following, we postpone its proof to Appendix~\ref{sub:acausality_of_Cauchy_sets}.

\begin{lem}\label{lem:acausal_parallel_Sets} If $S$ is an acausal Cauchy hypersurface in a smooth, globally hyperbolic Lorentzian manifold $M$ satisfying Condition B, then all parallel sets $S_t$ are acausal Cauchy hypersurfaces.
\end{lem}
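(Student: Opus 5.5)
The plan is to obtain the Cauchy property from Lemma~\ref{lem:parallel_cauchy} and to prove acausality directly. In a smooth, globally hyperbolic manifold, an acausal Cauchy hypersurface $S$ makes $d_J(S,\cdot)$ continuous by Lemma~\ref{lem:continuity_distance_fct}~(ii). Such a manifold also satisfies the standing assumptions of Subsection~\ref{sub:geodesics}: it is first-countable, causally simple, timelike path-connected and geodesic, with finite continuous distance, compact causal diamonds, and empty boundary, and Condition~B is assumed. Lemma~\ref{lem:parallel_cauchy} therefore shows that every $S_t$ is a Cauchy set, hence a Cauchy hypersurface. Since $\partial X=\emptyset$, we have $S_t=\{x\in J^{\pm}(S)\mid d_J(x,S)=|t|\}$. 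By the remark after Remark~\ref{rem:timelike_variation}, a Cauchy hypersurface is acausal exactly when no two of its points are causally related. It remains to exclude $y<z$ with $y,z\in S_t$; we take $t>0$, the case $t<0$ being symmetric.

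Since $S_t$ is achronal, $y\le z$ with $y\neq z$ forces $z\in J^+(y)\setminus I^+(y)$. So $z$ lies on a null geodesic from $y$, and $d(y,z)=0$. Let $\tau(x)\coloneqq d_J(S,x)=\sup_{p\in S}d(p,x)$ on $J^+(S)$. The key step is to show that the supremum $\tau(z)=t$ is attained. Take a maximizing sequence $p_k\in S$, $d(p_k,z)\to t>0$. The $p_k$ lie in $J^-(z)\cap S$, which is compact in a globally hyperbolic manifold because $S$ is a closed acausal Cauchy hypersurface; I would justify this by $J^-(z)\cap J^+(S)$ being compact, via \cite[Lemma~14.40]{On83}-type arguments. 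By continuity of $d$ we get $p\in S$ with $d(p,z)=t$, and by the Avez--Seifert theorem a maximal timelike geodesic $\sigma$ from $p$ to $z$.

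Now compare with $y$. We have $d(p,y)\le\tau(y)=t$. Also $p\le y\le z$ is impossible with $d(p,y)=t=d(p,z)$ unless the broken curve through $y$ is maximal. The reverse triangle inequality gives $d(p,z)\ge d(p,y)+d(y,z)$, which only says $t\ge d(p,y)$. The main obstacle is exactly here: we need strictness. I would use the standard fact that in a smooth spacetime a causal curve from $p$ to $z$ containing a null segment and a timelike segment, or an unbroken null segment after a timelike piece, is not maximal unless it is a single unbroken geodesic. Concretely, pick $q\in I^-(y)$ close to $y$ with $d(p,q)$ close to $d(p,y)$. Then $q\ll y\le z$ gives $q\ll z$, and the broken null/timelike geodesic from $p$ through $y$ to $z$ can be strictly lengthened by corner-cutting \cite[Proposition~10.46]{On83}. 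This requires more care, so I would argue as follows. Suppose $d(p,y)=t$ as well, for $p$ chosen maximizing for $z$. Then $p\to y\to z$ is a causal curve of length $t=d(p,z)$, hence maximal, hence an unbroken geodesic. But its first part is timelike and its second part is null, a contradiction. So $\tau(y)$ exceeds $d(p,y)$ is not needed; instead use a maximizer $p'$ for $y$: $d(p',y)=t$. Then $t\ge d(p',z)\ge d(p',y)+d(y,z)=t$, so $p'\to y\to z$ is maximal with a timelike and a null piece, a contradiction by the geodesic characterization of maximal curves. This uses only attainment of the sup at $y$, which is the compactness step above. That compactness step, showing $J^-(y)\cap S$ is compact for acausal Cauchy $S$, is the one I expect to need the most care.
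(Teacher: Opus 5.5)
Your final argument is the paper's proof. Continuity of $d_J(S,\cdot)$ from Lemma~\ref{lem:continuity_distance_fct}~(ii) together with Lemma~\ref{lem:parallel_cauchy} gives the Cauchy property; then, for $y\le z$ in $S_t$, a maximizer $p'\in S$ for $y$ concatenated with a null geodesic from $y$ to $z$ is a maximal curve with a timelike piece and a nonconstant null piece, which is impossible. The differences are cosmetic: the paper gets the maximizer from \cite[Corollary~3.47]{Mi19} rather than from compactness of $S\cap J^-(y)$ (which needs only closedness of $S$, not acausality), it excludes the broken maximal curve via \cite[Theorem~3.20]{KS18} rather than the unbroken-geodesic property, and your first attempt with a maximizer for $z$ can be dropped.
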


To show that a complete extended metric space is geodesic, by
\cite[Theorem 2.4.16]{BBI01} (applied to a sufficiently large ball on which the metric is finite), it suffices to prove the existence of midpoints. In our setting and under suitable assumptions, we next describe a midpoint of two given Cauchy sets $S$ and $S'$, i.e.~we describe a third Cauchy set $S''$ satisfying
\[
    d_J(S,S'') = d_J(S',S'') = \frac 1 2 d_J(S,S') \, .
\]
In fact, we show the following slightly more general statement. 

\begin{lem}\label{lem:mid} Assume the standing assumptions on a Lorentzian pre-length space $X$ as given at the beginning of this subsection. Suppose $S,S' \in \Cauchy_X$ are Cauchy sets for which $d_J(S,\cdot)$ and $d_J(S',\cdot)$ are continuous. Then for every $s,t\geq 0$ with $s+t = d_J(S,S')$ the set
\[
    S''=S_-'' \coloneqq ( S_s \cap J^-(S_t')) \cup (S_t' \cap J^-(S_s) )
\]
is a Cauchy set in $X$ with $d_J(S,S'')=s$ and $d_J(S'',S')=t$. Moreover, if $X=M$ is  
a smooth, globally hyperbolic Lorentzian manifold and $S$, $S'$ are acausal, then also $S''$ is acausal.
\end{lem}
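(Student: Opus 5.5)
The plan is to read $S''$ as the \emph{lower envelope} of the two Cauchy sets $S_s$ and $S'_t$. Along every inextendible timelike curve, $S''$ should pick out whichever of the two intersection points comes first. For the distance claims I only need the upper bounds $d_J(S,S'')\le s$ and $d_J(S'',S')\le t$. The matching lower bounds then come for free from Lemma~\ref{lem:tri_ineq}, which applies since Cauchy sets are weakly timelike intercepting by the proof of Proposition~\ref{prp:conditions_ext_metric}:
\[
s+t=d_J(S,S')\le d_J(S,S'')+d_J(S'',S')\le s+t ,
\]
and this forces equality in both bounds.

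\textbf{Step 1 (Cauchy property).} By Lemma~\ref{lem:parallel_cauchy}, $S_s$ and $S'_t$ are Cauchy sets. Let $\gamma$ be an inextendible timelike curve. It meets $S_s$ in exactly one point $\gamma(a)$ and $S'_t$ in exactly one point $\gamma(b)$. By Lemma~\ref{lem:prop_cauchy_set}, $J^-(S'_t)=S'_t\cup I^-(S'_t)$, and this set is disjoint from $I^+(S'_t)$. Since $\gamma$ is timelike, this gives $\gamma(a)\in J^-(S'_t)$ if and only if $a\le b$, and symmetrically for $\gamma(b)$. Hence $\gamma$ meets $S''$ exactly in $\gamma(\min\{a,b\})$. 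Every point of $S''\cap\gamma$ is of one of these two kinds, and both kinds give the same point, so the intersection is exactly one point.

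\textbf{Step 2 (the bound $d_J(x,S)\le s$ for $x\in S''$), the main step.} If $x\in S_s$, the bound holds by definition, including the boundary part of $S_s$, where even the strict inequality $d_J(x,S)<s$ holds. So let $x\in S'_t\cap J^-(S_s)$ and $y\in S$. I split according to the position of $x$ relative to $S$.
\begin{itemize}
\item If $x\in J^+(S)$, pick $w\in S_s$ with $x\le w$. The reverse triangle inequality gives $d(y,x)\le d(y,w)-d(x,w)\le s$, and $d(x,y)=0$ by Lemma~\ref{lem:prop_cauchy_set}.
\item If $x\in I^-(S)$ and $x\ll y$, I use the other set. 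Choose $z\in S'$ with $z\le x$ and $d(z,x)>t-\varepsilon$; this is possible because $d_J(x,S')=t$ is a supremum. Then
\[
s+t\ge d(z,y)\ge d(z,x)+d(x,y),
\]
so $d(x,y)\le s+\varepsilon$, and I let $\varepsilon\to 0$.
\end{itemize}
The boundary part of $S'_t$ has to be checked separately. There only $d_J(x,S')<t$ holds, so no approximating $z$ with $d(z,x)$ close to $t$ need exist. This is where I expect the main care to be needed. For such $x$, I would first try to show that $x\in\partial^+X\cap J^-(S_s)$ forces $x\in S_s$ (a future-boundary point cannot lie in $I^-(S_s)$), which reduces to the previous case. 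The bound $d_J(x,S')\le t$ for $x\in S''$ then follows verbatim by exchanging the roles of $(S,s)$ and $(S',t)$.

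\textbf{Step 3 (acausality in the smooth case).} By Lemma~\ref{lem:acausal_parallel_Sets}, $S_s$ and $S'_t$ are acausal. Suppose $p\le q$ with $p,q\in S''$. If both lie in the same parallel set, then $p=q$. Otherwise, say $p\in S_s\cap J^-(S'_t)$ and $q\in S'_t\cap J^-(S_s)$. Choose $q'\in S_s$ with $q\le q'$; then $p\le q'$, so acausality of $S_s$ gives $p=q'$. Thus $p\le q\le p$, and causality gives $p=q$. The reversed case is symmetric.
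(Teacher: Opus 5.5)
Your proof is correct and follows essentially the paper's route. For the Cauchy property you use the same lower-envelope argument via the disjoint decomposition of Lemma~\ref{lem:prop_cauchy_set}, you reduce the distance identities to $S''\subset N(S,s)\cap N(S',t)$ plus the triangle inequality, you use the same reverse-triangle estimate $s+t\ge d(z,y)\ge d(z,x)+d(x,y)$ against a near-maximizer, and for acausality you use the same $p\le q\le r$ argument. You argue by a direct case split rather than by the paper's contradiction, and your handling of the boundary part of the parallel sets does go through: a point $x\in\partial^+X\cap J^-(S_s)$ already lies in $S_s$ because $J^+(x)=\{x\}$. This covers a case that the paper's contradiction argument passes over tacitly.
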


\begin{figure}
	\centering
    \def\svgwidth{0.6\textwidth}
		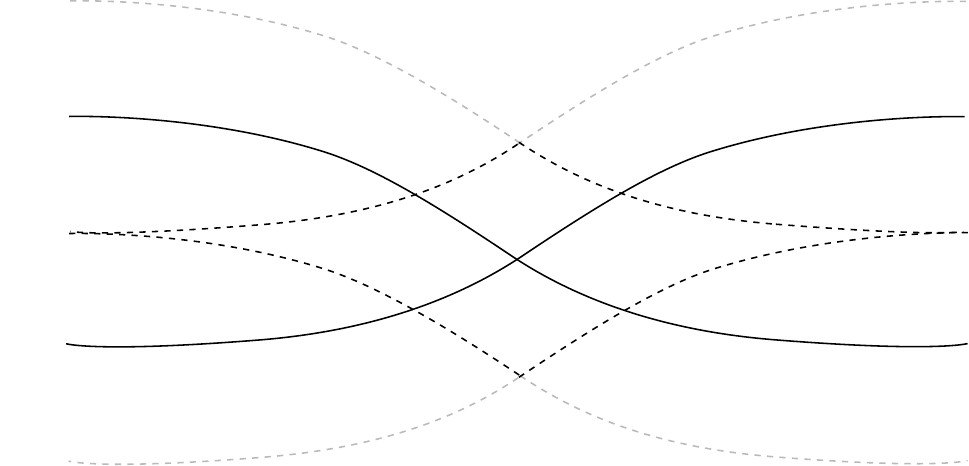
	\caption{Schematic picture of two Cauchy sets $S$ and $S'$ (solid lines) together with three midpoints (dashed lines) that satisfy $s=t=1/2$: $S_+''$ as defined after Lemma \ref{lem:mid}, $S_-''$ as defined in Lemma \ref{lem:mid}, and a third one between between  $S_+''$ and $S_-''$. The boundary of the neighborhoods $N(S,1/2)$ and $N(S',1/2)$ is indicated by faded dashed lines. 
}
	\label{fig:_triangle}
\end{figure}

Recall that a Cauchy hypersurface in a smooth, globally hyperbolic Lorentzian manifold is acausal if and only if it is strong; see~the discussion below Remark~\ref{rem:timelike_variation}. In general, the Cauchy set $S''$ is not unique. For instance, the set
\[
    S_+''\coloneqq( S_{-s} \cap J^+(S_{-t}')) \cup (S_{-t}' \cap J^+(S_{-s}) )\] also satisfies the conclusion of the lemma.

\begin{proof} First, we show that $S''$ is a Cauchy set. Let $\gamma$ be an inextendible timelike curve in $X$. To show that $\gamma$ intersects $S''$, by Lemma~\ref{lem:prop_cauchy_set} we can assume that it intersects the Cauchy set $S_s$ (see Lemma \ref{lem:parallel_cauchy}) in a point $p \in S_s \cap I^+(S_t')$. Then $\gamma$ has to intersect $S_t'$ in a point $q \ll p$, which lies in $I^-(S_s) \cap S_t' \subset S''$. 

Now assume by contradiction that an inextendible  timelike curve $\gamma$ intersects $S''$ in two distinct points $p,q$. Without loss of generality we can assume that $p\ll q$. Since both $S_s$ and $S'_t$ are Cauchy sets by Lemma~\ref{lem:parallel_cauchy}, we can further assume that $p \in S_s \backslash S_t'$ and $q \in S_t' \backslash S_s$. The case $p \in S_t' \backslash S_s$ and $q \in S_s \backslash S_t'$ is analogous. Then $q \in J^-(S_s)$ as $p,q \in S''$, so there is an $r\in S_s$ such that $q\leq r$. This implies $$p\ll q\leq r\,,$$ so $p\ll r$ by the push-up principle. By our assumptions and Corollary~\ref{cor:inex_timelike} we can thus construct an inextendible timelike curve through $p,r \in S_s$ in contradiction to the fact that $S_s$ is a Cauchy set.

Similarly, if $S$, $S'$ are acausal and $\gamma$ is an inextendible causal curve in $M$, then in the previous contradiction argument, which now uses Lemma~\ref{lem:acausal_parallel_Sets}, with two distinct intersection points $p,q$, we find $$p\leq q\leq r\,,$$ so $p\leq r$ for $p,r\in S_s$. By our assumptions we can construct an inextendible causal curve through $p$ and $r$. As $S_s$ is acausal, this implies $p=r$ and hence $p=q$ by causality of $X$, a contradiction.

It remains to show that $d_J(S,S'')=s$ and $d_J(S'',S')=t$ for Cauchy sets $S,S'$. After recalling the definition of $N(A.\varepsilon)$ in \eqref{eq:nghb}, we claim that this is equivalent to 
\begin{equation}\label{eq:contained}
 S'' \subset N(S,s) \cap N(S',t)\,. 
\end{equation} 
Indeed, by definition \eqref{eq:contained} is equivalent to $d_J(S,S'')\leq s$ and $d_J(S'',S')\leq t$. In this case, since $S''$ is a Cauchy set, the triangle inequality for $(\mathcal C_X,d_J)$ gives 
$$d_J(S,S')\leq d_J(S,S'')+d_J(S'',S')\leq  s+t = d_J(S,S')$$ and so  $d_J(S,S'')=s$ and $d_J(S'',S')=t$ as claimed.

Finally, we establish \eqref{eq:contained}. By definition, we have $S_s \cap J^-(S'_t) \subset N(S,s)$. Suppose there is some $p \in ( S_s \cap J^-(S_t')) \backslash N(S',t)$. Then for every $\varepsilon>0$ there is an $x \in S\cap J^-(p)$ with $d(x,p) \geq s- \varepsilon$ as $p\in S_s$, and there is a $y \in S'$ with $d_J(p,y) >t \geq 0$ as $p\notin N(S',t)$. There is an inextendible timelike curve through $p$ and $y$, and it intersects $S_t'$ in some point $y'$ by Lemma~\ref{lem:parallel_cauchy} with $d(y,y') \leq t$. Since $p \in J^-(S_t')$, we see that $p\leq y'$. By the reverse triangle inequality and $d_J(p,y)>0$, we infer that $p\leq y$. Now we choose $\varepsilon$, and thus $x$, such that $d(x,p)+d(p,y)>s+t$. The reverse triangle inequality then implies
\[
 d_J(S,S') \geq d_J(x,y) \geq d_J(x,p) + d_J(p,y) > s + t \, ,
\]
a contradiction. Thus, we proved $S_s \cap J^-(S'_t) \subset N(S,s)\cap N(S',t)$. Likewise, we obtain $S_t' \cap J^-(S_s)  \subset N(S,s)\cap N(S',t)$ and hence \eqref{eq:contained}, as claimed.
\end{proof}

As a consequence of Proposition~\ref{prp:conditions_compl_metric}, Lemma~\ref{lem:mid}, and \cite[Theorem 2.4.16]{BBI01}, we see that if a Lorentzian pre-length space $X$ satisfies the standing assumptions and the Lorentzian distance to each Cauchy set is continuous, then the space of Cauchy sets $(\mathcal{C}_X,d_J)$ is a geodesic complete extended metric space. In particular, this applies in the setting of a spatially compact, timelike Cauchy complete, globally hyperbolic KS-Lorentzian length space. Indeed, in this case we already know from Lemma~\ref{lem:continuity_distance_fct} that the distance function $d_J(S,\cdot)$ is continuous.

\begin{cor}\label{cor:geodesic} For a spatially compact, timelike Cauchy complete, globally hyperbolic KS-Lorentzian length space $X$, the space $(\mathcal{C}_X,d_J)$ is a complete geodesic metric space. 
\end{cor}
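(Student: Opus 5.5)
The plan is to check that a spatially compact, timelike Cauchy complete, globally hyperbolic KS-Lorentzian length space $X$ satisfies the standing assumptions of Subsection~\ref{sub:geodesics}. Then completeness follows from Proposition~\ref{prp:conditions_compl_metric}, midpoints from Lemma~\ref{lem:mid}, and geodesicity from \cite[Theorem~2.4.16]{BBI01}. Finiteness of the metric comes from Corollary~\ref{cor:compact_finite}.

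First, I collect the structural properties from Subsection~\ref{sub:ks_spaces}.
\begin{itemize}
\item $X$ is metric-compatible, timelike path-connected and first-countable, since it is metrizable.
\item It is geodesic with a finite, continuous Lorentzian distance by \cite[Theorems~3.28 and 3.30]{KS18}.
\item It is causally simple and has compact causal diamonds, since it is globally hyperbolic.
\item Its chronological boundary is empty, because every point is the midpoint of a timelike curve. Hence the bubbling boundary is empty as well.
\end{itemize}
Condition B follows from timelike Cauchy completeness by Proposition~\ref{prp:tcc_propB}, i.e.\ Theorem~\ref{thm:complete}(ii). Thus all standing assumptions hold.

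By Corollary~\ref{cor:inex_timelike}, there is an inextendible timelike curve through every point. Under Condition B each such curve is Cauchy complete, and Corollary~\ref{cor:weak cauchy_is_cauchy} gives $\Cauchy_X=\Cauchy_X^w$. Proposition~\ref{prp:conditions_ext_metric}(ii) then makes $(\Cauchy_X,d_J)$ an extended metric space. Spatial compactness means the space is non-empty and all Cauchy sets are compact, so $d_J$ is finite by Corollary~\ref{cor:compact_finite}. Completeness is exactly Proposition~\ref{prp:conditions_compl_metric}.

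For geodesicity, it suffices by \cite[Theorem~2.4.16]{BBI01} to produce midpoints, since the space is complete. Given $S,S'\in\Cauchy_X$, both sets are compact. Lemma~\ref{lem:continuity_distance_fct}(i) therefore shows that $d_J(S,\cdot)$ and $d_J(S',\cdot)$ are continuous. Lemma~\ref{lem:mid} with $s=t=\tfrac12 d_J(S,S')$ then yields a Cauchy set $S''$ with $d_J(S,S'')=d_J(S'',S')=\tfrac12 d_J(S,S')$.

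The main obstacle is the bookkeeping of hypotheses, chiefly two points. First, the continuity of $d_J(S,\cdot)$ genuinely needs compactness, which is where spatial compactness enters. Second, inextendibility in the KS (locally Lipschitz) sense must agree with the continuous sense, so that Condition B and the Cauchy-set notions are unambiguous. Remarks on Definition~\ref{dfn:inext_def}(a),(b) settle the second point, since KS-length spaces are metric-compatible and causally and timelike path-connected.
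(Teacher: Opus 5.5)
Your proposal is correct and follows essentially the same route as the paper. The paper also verifies the standing assumptions, with Condition~B coming from Proposition~\ref{prp:tcc_propB}, gets completeness from Proposition~\ref{prp:conditions_compl_metric}, obtains continuity of $d_J(S,\cdot)$ from compactness via Lemma~\ref{lem:continuity_distance_fct}(i), and produces midpoints with Lemma~\ref{lem:mid} before invoking \cite[Theorem~2.4.16]{BBI01}. One small nit: Corollary~\ref{cor:compact_finite} is formally stated under the separability hypothesis of Corollary~\ref{cor:metric_corollary}, which the statement does not assume, but its proof applies verbatim here, since a finite, continuous $d$ is bounded on the compact set $S\times S'$.
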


Next, we observe that under suitable assumptions parallel Cauchy sets can be realized as level sets of a Cauchy time function.

\begin{prp}\label{prp:geodesic_time_function} Assume the standing assumptions on a Lorentzian pre-length space $X$ as given at the beginning of this subsection with empty chronological boundary. Let $S$ be a Cauchy set in $X$. If $d_J(S,\cdot)$ is continuous, then the function $\tau_S: X \rightarrow \R$ defined by
\[
        \tau_S(x) = \begin{cases} d_J(S,x) & \text{ if } x\in J^+(S)\,, \\
        -d_J(S,x) & \text{ if } x\in J^-(S) \end{cases}
\]
is continuous, has level sets $(\tau_S)^{-1}(t)=S_t$, and satisfies
\[
    \lim_{t\searrow \inf I} (\tau_S\circ \gamma)(t) =-\infty \quad \text{and}\quad\lim_{t\nearrow \sup I} (\tau_S\circ \gamma)(t) =\infty \, 
\]
for every inextendible timelike curve $\gamma:I \rightarrow X$.
\end{prp}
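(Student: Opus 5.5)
The plan has four parts. First, I check that $\tau_S$ is well defined. By Lemma~\ref{lem:prop_cauchy_set}, $X = I^-(S)\cup S\cup I^+(S)$ is a disjoint union and $J^\pm(S)=S\cup I^\pm(S)$. So the two cases overlap only on $S$. There $d_J(S,x)=0$ by achronality (Lemma~\ref{lem:cauchy_is_achronal_and_chro_obs} together with Lemma~\ref{lem:separation}, (ii)), so both formulas give $0$. Moreover, $d_J(S,x)$ is finite: for $x\in I^+(S)$, pick an inextendible timelike curve through $x$. It meets $S$ once, and Condition~B together with the emptiness of the chronological boundary forces it to be future Cauchy complete with $\sup I\notin I$. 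I still need an a priori bound, and I expect to get it from continuity of $d_J(S,\cdot)$ being assumed real-valued. I will take this as part of the hypothesis "continuous", meaning finite.

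For continuity, I use that $d_J(S,\cdot)$ is continuous by assumption and that $I^+(S)$ and $I^-(S)$ are open and disjoint. On $I^+(S)$ we have $\tau_S=d_J(S,\cdot)$, and on $I^-(S)$ we have $\tau_S=-d_J(S,\cdot)$. At a point $x\in S$, $\tau_S(x)=0=d_J(S,x)$. For $y$ near $x$, $|\tau_S(y)|=d_J(S,y)$ is close to $d_J(S,x)=0$, so $\tau_S$ is continuous at $x$ as well.

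Next come the level sets. For $t>0$, $\tau_S(x)=t$ if and only if $x\in J^+(S)$ and $d_J(x,S)=t$. Since $\partial^+X=\emptyset$, this is exactly the definition of $S_t$; the case $t<0$ is symmetric. For $t=0$, $\tau_S(x)=0$ if and only if $x\in S$. Indeed, if $x\in I^+(S)$, say $y\ll x$ with $y\in S$, then $d_J(S,x)\ge d(y,x)>0$.

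For the limits, let $\gamma:I\to X$ be an inextendible timelike curve. It meets $S$ at some parameter $t_0$. Along $\gamma$, $\tau_S\circ\gamma$ is strictly increasing on $[t_0,\sup I)$ by the reverse triangle inequality: for $y\in S$ with $y\le\gamma(s)\ll\gamma(s')$ we get $d(y,\gamma(s'))\ge d(y,\gamma(s))+d(\gamma(s),\gamma(s'))$, and the $\sup$ over $y$ is approached by points $y\in J^-(\gamma(s))$. Also $\tau_S(\gamma(s))\ge d(\gamma(t_0),\gamma(s))$. If this stayed bounded, Condition~B would give $\sup I\in I$, and by Lemma~\ref{lem:no_bubbling_(t)-inext} the endpoint would lie in $\partial^+_{ch}X=\emptyset$, a contradiction. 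Hence $d(\gamma(t_0),\gamma(s))$ is unbounded, and therefore $\tau_S\circ\gamma\to\infty$. The past limit is analogous. The main subtlety I expect is the well-definedness/finiteness point above; everything else follows directly from the earlier lemmas.
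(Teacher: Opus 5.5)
Your proposal is correct and follows the paper's proof: continuity and the level sets come from continuity of $d_J(S,\cdot)$ together with the disjoint decomposition $X=I^-(S)\cup S\cup I^+(S)$, and the limits come from Condition~B and Lemma~\ref{lem:no_bubbling_(t)-inext} with empty boundary (which make $d(\gamma(t_0),\gamma(\cdot))$ unbounded) combined with $\tau_S\circ\gamma\geq d(\gamma(t_0),\gamma(\cdot))$. You also fill in details the paper leaves implicit: the agreement of the two cases on $S$, continuity at points of $S$, the $t=0$ level set, and monotonicity along $\gamma$. Your reading of the continuity hypothesis as including finiteness of $d_J(S,\cdot)$ is the same tacit assumption the paper makes.
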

\begin{proof} Continuity of $\tau_S$ follows from continuity of $d_J(S,\cdot)$. Moreover, by construction the level sets of $\tau_S$ are the parallel sets of $S$, and these are Cauchy sets by Lemma~\ref{lem:parallel_cauchy}. Let $\gamma: I \rightarrow X$ be an inextendible timelike curve in $X$. By Lemma~\ref{lem:no_bubbling_(t)-inext} and emptiness of the chronological boundary, it does not have a future endpoint. Since $X$ satisfies Condition B, for every $t_0 \in I$ the function $d_J(\gamma(t_0),\gamma(t))$ is unbounded on both $I\cap (-\infty,t_0]$ and $I\cap [t_0,\infty)$.  We can assume that $\gamma(0) \in S$. The claim now follows from $d_J(S,\gamma(t)) \geq  d_J(\gamma(0),\gamma(t))$ and an analogous past argument.
\end{proof}

The conclusion of the proposition should be compared with Definition~\ref{dfn:strong_cauchy_time_function} of a strong Cauchy time function. In fact, a similar argument as in the proof of Proposition~\ref{prp:geodesic_time_function} shows that if $S$ is an acausal Cauchy hypersurface in a smooth, globally hyperbolic Lorentzian manifold satisfying Condition A$^*$, then $\tau_S$ is a strong Cauchy time function with the parallel sets of $S$ as level sets.
\vspace{0.2cm}

Finally, we show that the Cauchy time function from Proposition~\ref{prp:geodesic_time_function} resp.~parallel Cauchy sets give rise to geodesic rays in the space of Cauchy sets. We set $t_S\coloneqq-\sup\{ d_J(x,S) \mid x \in J^-(S) \}$ and $t^S\coloneqq \sup\{ d_J(S,x) 
\mid x \in J^+(S) \}$. 

\begin{lem} Assume the standing assumptions on a Lorentzian pre-length space $X$ as given at the beginning of this subsection.  Let $S$ be a Cauchy set in $X$. If $d_J(S,\cdot)$ is continuous, then $[0,t^S) \ni t \mapsto S_t$ is a geodesic in $(\Cauchy_X,d_J)$, 
i.e.~it holds that $$d_J(S_t,S_{s})=|t-s|$$ for all  $t,s \in [0,t^S)$. An analogous statement holds for the past parallel sets. 
\end{lem}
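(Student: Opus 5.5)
The plan is to fix $0\le s<t<t^S$ and prove the two inequalities $d_J(S_s,S_t)\le t-s$ and $d_J(S_s,S_t)\ge t-s$ separately. Lemma~\ref{lem:parallel_cauchy} says $S_s,S_t$ are Cauchy sets, so both lie in $(\Cauchy_X,d_J)$. Throughout I use that, for $x\in J^+(S)$, every $z\in S$ with $d(z,x)>0$ satisfies $z\le x$. Hence $d_J(x,S)=\sup_{z\in S\cap J^-(x)}d(z,x)$, and this is exactly $\tau_S(x)$ in the notation of Proposition~\ref{prp:geodesic_time_function}.

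\emph{Upper bound.} Take $x\in S_s$ and $y\in S_t$. The points $x,y$ lie in $J^+(S)$, with $\tau_S(x)\le s$ and $\tau_S(y)\le t$ (points of $\partial^+X$ have strictly smaller value). Suppose $x\le y$. For any $z\in S$ with $z\le x$, the reverse triangle inequality gives $d(z,x)+d(x,y)\le d(z,y)\le t$. Using $x\in S_s$ with $d_J(x,S)=s$, I pick $z$ with $d(z,x)\ge s-\varepsilon$, which yields $d(x,y)\le t-s+\varepsilon$. If $x\in\partial^+X$, then $x\le y$ forces $y=x$, which is impossible since $S_s\cap S_t=\emptyset$ for $t\neq s$. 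Now suppose $y\le x$. The same computation gives $d(z,y)+d(y,x)\le s$ for $z\le y$. With $d(z,y)$ close to $t>s$ (or $y$ in the boundary, which is handled similarly) this shows $d(y,x)=0$. Hence $d_J(x,y)\le t-s$, and so $d_J(S_s,S_t)\le t-s$.

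\emph{Lower bound.} This is the main obstacle, since I need points that nearly realise the distance. Since $t<t^S$, there is some $w\in J^+(S)$ with $d_J(S,w)>t$. I pick $z\in S$ with $d(z,w)>t$ and join $z$ to $w$ by a maximal causal curve $\gamma$, which exists because $X$ is geodesic. Along $\gamma$ the function $\tau_S\circ\gamma$ is continuous, because $d_J(S,\cdot)$ is continuous by assumption. It is $0$ at $z$ and exceeds $t$ at $w$. By the intermediate value theorem it takes the values $s$ and $t$ at points $x\le y$ of $\gamma$, so $x\in S_s$ and $y\in S_t$.

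I then want $d(x,y)\ge t-s$. Maximality of $\gamma$ gives additivity, $d(z,y)=d(z,x)+d(x,y)$, and also $d(z,y)\le\tau_S(y)=t$. What I would try first is to choose $z$ and $w$ so that $d(z,w)$ is close to $\tau_S(w)$, and then use the reverse triangle inequality to deduce $d(z,y)\ge t-\varepsilon$. I would argue this through $\tau_S(w)\le\tau_S(y)+d(y,w)$, which follows from the reverse triangle inequality with $y\le w$; combined with additivity along $\gamma$ it gives $d(z,y)\ge\tau_S(y)-\varepsilon'=t-\varepsilon'$. Since $d(z,x)\le\tau_S(x)=s$, this yields $d(x,y)\ge t-s-\varepsilon'$, and sending $\varepsilon'\to0$ gives the lower bound.

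The delicate point is making the additivity and the near-optimal choice compatible. I would handle it by fixing $\varepsilon'$ first, choosing $z$ with $d(z,w)>\tau_S(w)-\varepsilon'$, and then locating $x,y$ on $\gamma$ by the intermediate value theorem. The past statement follows by time duality.
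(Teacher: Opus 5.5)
Your proof is correct and is essentially the paper's argument. The upper bound is the same reverse-triangle estimate; the paper argues by contradiction and skips the cases $y\le x$ and boundary points, which you handle.

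The lower bound uses the same device: a maximal segment starting in $S$, along which $d_J(S,\cdot)$ is continuous. The only variation is where the far endpoint sits. The paper puts it on the outer level set itself, taking $p\in S$ and $q\in S_t$ with $d(p,q)\ge t-\varepsilon$, which exist because $d_J(S,S_t)=t$ for $t<t^S$. Then one intersection point $r\in S_s$ suffices, and $d(r,q)=d(p,q)-d(p,r)\ge t-s-\varepsilon$ directly. Your choice of $w$ beyond level $t$ costs an extra transfer-of-near-optimality step. In exchange, in the case $s=0$ (take $x=z$) it proves the identity $d_J(S,S_t)=t$ that the paper uses without comment.

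There are two slips to fix.
\begin{enumerate}
\item The inequality you need is $\tau_S(w)\ge\tau_S(y)+d(y,w)$, not $\le$. The reverse triangle inequality gives exactly this: take the supremum over $z'\in S\cap J^-(y)$ of $d(z',w)\ge d(z',y)+d(y,w)$. This is also the direction that turns $d(z,y)=d(z,w)-d(y,w)>\tau_S(w)-\varepsilon'-d(y,w)$ into $d(z,y)>\tau_S(y)-\varepsilon'$. As you wrote it, the inequality is false in general and the step yields nothing.
\item $S_s\cap S_t$ need not be empty: a point of $\partial^+X$ with $d_J(\cdot,S)<s$ belongs to both parallel sets. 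This is harmless, since then $x\le y$ with $x\in\partial^+X$ forces $y=x$ and $d_J(x,y)=0$.
\end{enumerate}

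Also, your formula $d_J(x,S)=\sup_{z\in S\cap J^-(x)}d(z,x)$ for $x\in J^+(S)$ needs $d(x,z)=0$ for all $z\in S$. This follows from $J^+(S)\cap I^-(S)=\emptyset$ (the lemma giving the disjoint decomposition $X=I^+(S)\cup S\cup I^-(S)$), not only from the remark you give.
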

\begin{proof}
Let $t,s \in [0,t^S)$. The case $t,s\in (t_S,0]$ follows analogously. Assume without loss of generality $0<t<s$. 

For any $\varepsilon>0$ there are $x\in S$ and $z \in S_s$ with $d(x,z)+ \varepsilon \geq d_J(S,S_s) = s $. Note that the last inequality holds as $s<t^S$. We can assume that $d(x,z)>t$. By continuity a maximal causal curve from $x$ to $z$ intersects $S_t$ in some point $y$ with $d(x,y)+d(y,z) = d(x,z)$. Hence,
\[
 d_J(S_t,S_s) \geq d(y,z) = d(x,z)-d(x,y) \geq s-  
 \varepsilon -d_J(S,S_t)  =s-t -\varepsilon\,.
\]
Sending $\varepsilon$ to $0$ gives $d_J(S_t,S_s)\geq s-t$. 

Now assume that $d_J(S_t,S_s)>s-t$. Then there are $v \in S_t$ and $w \in S_s$ with $d(v,w)>s-t$. In particular, we have $v \notin \partial^+X$, and thus $d(S,v)=t$. We can choose some $\lambda\in (0,1)$ and some $u \in S$ with $d(u,v) > t - \lambda (d(v,w)-s+t)>0$. The reverse triangle inequality now implies that
\[
   d(u,w) \geq d(u,v) + d(v,w) > t - \lambda (d(v,w)-s+t) +d(v,w)>s \, ,
\]
a contradiction to $d_J(S,S_s)=s$. 
\end{proof}

\subsection{Local compactness}\label{sub:local_cpct} 
Recall that a topological space is called locally compact if every point has a compact neighborhood. For instance, the space of real-valued $1$-Lipschitz functions on an interval in $\R$ is locally compact with respect to the topology of uniform convergence if and only if the interval is bounded. This can be seen by the Arzel\`a--Ascoli theorem in the bounded case and by a sequence of bump functions with the bump moving to infinity in the unbounded case.
Similarly, we can see that the space of Cauchy hypersurfaces in the conical Minkowski spacetime over a compact domain is locally compact in view of Lemma~\ref{lem:cauchy_graph} and that it is not locally compact in Minkowski space. In particular, this shows that the spatial compactness assumption in the following result, which can be regarded as an analog of the \emph{Blaschke selection theorem}, is necessary. Recall that a Lorentzian manifold is spatially compact if it has a compact Cauchy hypersurface. 

\begin{prp} \label{prp:complete_metric_space} 
Let $M$ be a smooth, spatially compact, globally hyperbolic Lorentzian manifold. Then the metric space $(\Cauchy_M,d_J)$ is locally compact.
\end{prp}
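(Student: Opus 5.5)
Here is the plan. I would use the Bernal--S\'anchez splitting theorem \cite{BS03} to write $M$ isometrically as $\R\times\Sigma$ with metric $g=-\beta\,dt^2+h_t$. Here $\Sigma$ is a compact smooth manifold and $\beta>0$ is smooth. Each slice $\{t\}\times\Sigma$ is a smooth spacelike Cauchy hypersurface, and the vector field $\partial_t$ is timelike. The integral curves $t\mapsto(t,p)$ are inextendible timelike curves. Every Cauchy hypersurface $S$ therefore meets each of them exactly once. So $S$ is the graph $S_f=\{(f(p),p)\}$ of a function $f\colon\Sigma\to\R$, and $f$ is continuous because $S$ is closed (Lemma~\ref{lem:prop_cauchy_set}) and compact. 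Moreover $f$ is ``achronal'': there is no $p,q$ with $(f(p),p)\ll(f(q),q)$. The plan is then to imitate the Arzel\`a--Ascoli argument sketched before the statement. Fix $S_0=S_{f_0}\in\Cauchy_M$ and $r>0$ with $r$ finite (spatial compactness makes $d_J$ finite by Corollary~\ref{cor:compact_finite}). I will show that the closed ball $\bar B(S_0,r)$ is compact; in particular it is a compact neighborhood.

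\textbf{Step 1 (uniform bounds).} Every $S_f\in\bar B(S_0,r)$ lies in a fixed compact set $K\subset M$. Indeed, $S_f\subset N(S_0,r)$, so each point $x\in S_f$ has $d_J(x,S_0)\le r$. For a point $(t,p)$ with $t$ very large, the vertical curve from $(f_0(p),p)$ to $(t,p)$ has length at least $\int_{f_0(p)}^t\sqrt{\beta}$. By compactness of $\Sigma$ this is bounded below uniformly in $p$ by something tending to infinity with $t-f_0(p)$. So $t$ is bounded above, and analogously below. Hence $f$ takes values in a fixed interval $[a,b]$.

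\textbf{Step 2 (equicontinuity).} Achronality of $S_f$ gives a uniform modulus of continuity. Put an auxiliary Riemannian metric on $\Sigma$. On $[a,b]\times\Sigma$, compactness gives a constant $c>0$ with the following property: any curve $s\mapsto(t_0+Cs,\sigma(s))$ with $\sigma$ unit speed and $C\ge c$ is timelike. So if $|f(p)-f(q)|>c\,\mathrm{dist}(p,q)$, then the two graph points are chronologically related, contradicting achronality (Lemma~\ref{lem:cauchy_is_achronal_and_chro_obs}). Thus every $f$ in the ball is $c$-Lipschitz, and Arzel\`a--Ascoli yields a uniformly convergent subsequence $f_j\to f$.

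\textbf{Step 3 (limit is Cauchy and convergence in $d_J$).} This is the main obstacle. The graph $S_f$ is achronal, because chronological relations are open and would pass to the approximants. It is also a graph over $\Sigma$, so it meets every $t$-line. I would then argue that every inextendible timelike curve meets $S_f$. Such a curve is a graph over $\R$ of $t$ that goes from $t=-\infty$ to $+\infty$ (compare \cite[Lemma~14.29]{On83}). The function $f$ is continuous and takes values in $[a,b]$. An intermediate value argument along the curve, applied to $t-f$, then gives an intersection, and achronality gives uniqueness. Hence $S_f\in\Cauchy_M$.

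For $d_J(S_{f_j},S_f)\to0$, the natural route is to use that $d$ is continuous on the compact set $K\times K$, hence uniformly continuous there. One could also argue directly that the Hausdorff limit of the $S_{f_j}$ is $S_f$, and then repeat the argument from the proof of Proposition~\ref{prp:conditions_compl_metric}. That argument gives $d_J(S_f,S_{f_j})\le\varepsilon$ once $d_J(S_{f_j},S_{f_k})<\varepsilon$, but a general subsequence of the ball is not Cauchy in $d_J$. So I would instead prove directly that uniform convergence $f_j\to f$ implies $d_J$-convergence. Suppose not. Then there are $x_j\in S_{f_j}$ and $y_j\in S_f$ with $d(x_j,y_j)\ge\delta$, or with the order reversed. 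By compactness of $K$ pass to limits $x_j\to x\in S_f$ and $y_j\to y\in S_f$. Continuity of $d$ then gives $d(x,y)\ge\delta>0$, which contradicts achronality of $S_f$ (Lemma~\ref{lem:separation}\,(ii)). Closedness of the ball follows from the triangle inequality, so $\bar B(S_0,r)$ is sequentially compact, hence compact.
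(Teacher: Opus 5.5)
Your overall strategy is essentially the paper's: represent Cauchy hypersurfaces as graphs over a compact slice, get uniform Lipschitz bounds from compactness, apply Arzel\`a--Ascoli, then show the limit graph is an achronal Cauchy hypersurface and the $d_J$-limit. However, Step~1 is false as stated. You claim that $\int_{f_0(p)}^t\sqrt{\beta(s,p)}\,ds$ is bounded below, uniformly in $p$, by a quantity tending to infinity. Compactness of $\Sigma$ gives nothing of the sort. In the Bernal--S\'anchez splitting the function $t$ carries no metric information, and $\int^{\infty}\sqrt{\beta}\,dt$ may well be finite.

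In fact the conclusion you are aiming at, compactness of \emph{every} closed ball $\bar B(S_0,r)$, is properness, and it fails without a completeness hypothesis. Take the flat cylinder $M=(0,1)\times S^1$, which is spatially compact and globally hyperbolic. The closed ball of radius $1/2$ around $\{1/2\}\times S^1$ contains all slices $\{1/j\}\times S^1$, $j\geq 2$. No subsequence of these converges in $d_J$, since every point $x=(t_0,p)$ of a putative limit satisfies $d_J(x,\{1/j\}\times S^1)\geq t_0-1/j\to t_0>0$. This is exactly why the paper obtains properness in Theorem~\ref{thm:metric_properties}\,(iv) only under timelike Cauchy completeness.

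The repair is cheap, because local compactness only needs one compact neighborhood of $S_0$. Put $A=\min f_0$ and $B=\max f_0$, and choose
\[
0<r<\min\Bigl\{\min_{[B,B+1]\times\Sigma}\sqrt{\beta},\ \min_{[A-1,A]\times\Sigma}\sqrt{\beta}\Bigr\}.
\]
Then your vertical-segment estimate forces every $f$ with $S_f\in\bar B(S_0,r)$ to take values in $[A-1,B+1]$, and Steps~2 and~3 go through unchanged. This is the analogue of the paper's restriction to a small tubular neighborhood of a smooth spacelike Cauchy hypersurface.

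With that fix, the rest of your argument is correct and in places slightly cleaner than the paper's. Your intermediate-value argument along $t\circ\gamma$ replaces the paper's non-imprisonment and limit-parameter argument for showing that $S_f$ meets every inextendible timelike curve. Deriving $d_J(S_{f_j},S_f)\to 0$ directly from achronality of $S_f$ is more direct than the paper's detour via $d_J(S_{f_k},S_{f_k})=0$.
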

\begin{proof} Let $S$ be a smooth, spacelike, compact Cauchy hypersurface and $X$ a smooth unit-norm timelike vector field on $M$; see \cite{BS03}. Since the flow lines of $X$ are neither future nor past imprisoned, they are inextendible by Lemma~\ref{lem:inextendible_charac} and Lemma~\ref{lem:no_bubbling_(t)-inext}.
Hence, each flow line intersects $S$ exactly once by definition. If $U\subset \R\times M$ denotes the open set where the maximal flow associated to $X$ is defined, the restricted flow $\Phi: U \cap (\R \times S) \rightarrow M$ is therefore a smooth bijection. As the spacelike Cauchy hypersurface $S$ is transverse to $X$, the map $\Phi$ is further a local diffeomorphism and hence a diffeomorphism.
  Moreover, we see that any other Cauchy hypersurface $S'$ can be represented as a graph $S'=S_f$ over $S$ via $\Phi$, i.e.~$S_f= \{ \Phi_{f(x)}(x) \mid x \in S\}$ for some continuous function $f:S\rightarrow \R$.

By the normalization assumption on $X$, we have that $L(\Phi_{\cdot}(x)_{|[t_1,t_2]})=t_2-t_1$ for all admissible $t_1<t_2$. This implies 
\begin{equation}
\label{eq:sup_norm_estimate}
     \|f-g \|_{\infty} \leq d_J(S_f,S_g) .
\end{equation}
For sufficiently small $r>0$ the set
\[
    \{ x \in M \mid d_J(x,S) \leq r \}\] 
can be expressed as an $r$-tubular neighborhood, that is, the image of the vectors in the normal bundle of $S$ with norm $\leq r$ under the normal exponential map, and hence it is compact. It contains all Cauchy hypersurfaces in the closed ball
\[
    \bar B_r(S)=\{S' \in \Cauchy_M \mid d_J(S,S') \leq r \}
\]
in $(\mathcal{C}_M,d_J)$. By compactness and smoothness of the Lorentzian metric, the functions $f$ representing the Cauchy hypersurfaces contained in $\bar B_r(S)$ are uniformly Lipschitz continuous. 
Let $S_{f_k}$ be a sequence of such Cauchy hypersurfaces. Since the functions $f_k$ are uniformly bounded by \eqref{eq:sup_norm_estimate} and uniformly Lipschitz continuous, we can assume that the sequence $f_k$ converges uniformly to a Lipschitz continuous function $f$ by the Arzel\`a--Ascoli theorem. Using the uniform Lipschitz continuity, we see that $S_f$ coincides with the set of all points $x\in M$ such that for any neighborhood $U$ of $x$ in $M$ it holds that $U\cap S_{f_k} \neq \emptyset$ for infinitely many $k$. 
We claim that $d_J(S_{f_k},S_f) \rightarrow 0$. Otherwise, there are some $\varepsilon>0$ and sequences $(x_j)_{j\in\N}$ and $(y_j)_{j\in\N}$ in $S$ with
\[
    d_J(\Phi_{f(x_j)}(x_j),\Phi_{f_j(y_j)}(y_j)) \geq \varepsilon \,.
\]
By compactness of $S$ we can assume that $x_j \rightarrow x \in S$ and $y_j \rightarrow y \in S$. By continuity this implies 
\[
    d_J(\Phi_{f(x)}(x),\Phi_{f(y)}(y)) \geq \varepsilon \,.
\]
Again by continuity, this implies
\[
    d_J(S_{f_k},S_{f_k}) \geq d_J(\Phi_{f_k(x)}(x),\Phi_{f_k(y)}(y)) \geq \varepsilon/2
\]
for sufficiently large $k$, a contradiction, as the 
hypersurfaces $S_{f_k}$ satisfy $d_J(S_{f_k},S_{f_k})=0$ by Lemma~\ref{lem:separation}, (ii).

It remains to show that $S_f$ is a Cauchy hypersurface. For an inextendible timelike curve $\gamma:I \rightarrow M$ in $M$, parametrized by arclength with respect to a complete auxiliary Riemannian metric, let $x_k=\gamma(t_k)$ be the intersection of $\gamma$ with $S_{f_k}$. Since the $x_k$ are contained in a compact set, they converge along a subsequence 
to some $x\in S_f$.  Since the inextendible curve $\gamma$ is not imprisoned, it leaves every compact set. Thus, also the $t_k$ stay in a compact set and hence converge along a subsequence to some $t\in I$ with $\gamma(t)= x \in S_f$ by continuity.

A globally hyperbolic Lorentzian manifold satisfies all assumptions of Proposition~\ref{prp:conditions_compl_metric} except that the inextendible timelike curves cannot be chosen Cauchy complete in general. The argument in the last paragraph of Proposition~\ref{prp:conditions_compl_metric} shows that every inextendible timelike curve intersects the $d_J$-limit $S_f$ of the Cauchy hypersurfaces $S_{f_k}$ only once.
\end{proof}

\subsection{Proof of Theorem~\ref{thm:metric_properties}} 
By Corollary~\ref{cor:inex_timelike}, we can apply Proposition~\ref{prp:conditions_ext_metric}, (ii), to deduce that $(\mathcal C_M, d_J)$ is an extended metric space, which is non-empty by Geroch's existence result \cite[Theorem 11]{Ge70}. If $M$ is timelike geodesically complete, then every inextendible timelike geodesic, which has constant speed by the geodesic equation, has infinite length in the future and past directions and is thus Cauchy complete, and hence by Proposition~\ref{prp:conditions_ext_metric}, (ii), we know that $(\mathcal C_M^w,d_J)$ is an extended metric space as well. 

Claim (i) is a consequence of Proposition~\ref{prp:conditions_compl_metric}. 
 Claim (ii) follows by the same proposition using Theorem~\ref{thm:complete}, (ii). 
Claim (iii) follows from Corollary~\ref{cor:compact_finite} and Proposition~\ref{prp:complete_metric_space}. Finally, Claim (iv) is a consequence of the previous claims and the (proof of the) Hopf--Rinow theorem for locally compact length spaces; see \cite[Theorem 2.5.28]{BBI01}.
\appendix

\section{Cauchy time functions for Lorentzian metric spaces}\label{app:time_functions}

Here we explain that the proof of \cite[Theorem~21]{Mi26}, in particular, generalizes to provide a strong Cauchy set in every separable, first-countable Lorentzian metric space $X$ with compact causal diamonds, empty bubbling boundary, and empty spacelike boundary. To this end, we first generalize the notion of a Cauchy time function as given in \cite[Definition~16]{Mi26} to the setting with boundary as follows. In order to distinguish this notion from the Cauchy time function constructed in Proposition~\ref{prp:geodesic_time_function}, we refer to it as a \textit{strong} Cauchy time function.

\begin{dfn} \label{dfn:strong_cauchy_time_function}
A continuous function $\tau : X \rightarrow [-\infty,\infty]$ is called \emph{time function} if $\tau(x) < \tau(y)$ whenever $x\leq y$ and $x\neq y$. It is called \emph{strong Cauchy time function} if in addition for every inextendible causal curve $\gamma: I \rightarrow X$ defined on an interval $I\subset \R$ it holds that
\[
    \lim_{t\searrow \inf I} (\tau\circ \gamma)(t) =-\infty \quad \text{and}\quad\lim_{t\nearrow \sup I} (\tau\circ \gamma)(t) =\infty \, .
\]
\end{dfn}

Note that strong Cauchy time functions can only exist if no constant curve on a degenerate interval is inextendible. This is for instance the case if $X$ is causally path-connected and has empty spacelike boundary. Note further that if there is an inextendible causal curve passing through a point $x \in X$, then a strong Cauchy time function $\tau$ on $X$ satisfies  $\tau(x)= \pm \infty$ if and only if $x \in \partial_{ca}^{\pm} X$. It is clear from the definition and the intermediate value theorem that every level set of a finite value of a strong Cauchy time function on $X$ is a strong Cauchy set in $X$.

In a causally path-connected, 
countably generated Lorentzian metric space, every causal curve is, 
after potential reparametrization, the restriction of an inextendible causal curve; see \cite[Proposition~20]{Mi26}. Going through the proof of this statement shows that it also holds in the setting of the following theorem 
(with ``inextendible'' in the sense of Definition~\ref{dfn:inext_def}). However, as we only apply the theorem in the setting of Corollary~\ref{cor:metric_corollary}, where we have timelike extensions and where  inextendible causal extensions are thus more easily guaranteed by Corollary~\ref{cor:inex_timelike}, we omit the details here.  

\begin{thm}\label{thm:cauchy_time_functions_exist}

Every separable, first-countable, causally path-connected Lorentzian metric space $X$ with compact causal diamonds, empty bubbling boundary, and empty spacelike boundary has a strong Cauchy time function.
\end{thm}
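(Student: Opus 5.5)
Following Minguzzi's proof of \cite[Theorem~21]{Mi26}, I would build $\tau$ from volume functions of a finite measure, and I would only change the places where the absence of a chronological boundary was used.

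\textbf{Step 1: a finite measure of full support.} Since $X$ is separable and first-countable, choose a countable dense set $\{p_k\}_{k\in\N}$. Set $\mu=\sum_k 2^{-k}\delta_{p_k}$, or a smeared version of it. This is a finite Borel measure charging every nonempty open set.

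\textbf{Step 2: past and future volume functions.} Define $t^-(x)=\mu(I^-(x))$ and $t^+(x)=-\mu(I^+(x))$.
\begin{itemize}
\item \emph{Continuity.} This comes from continuity of $d$, which makes $I^\pm(x)$ open, together with causal simplicity ($J$ is closed because $J_d$ is closed) and compact causal diamonds. These give outer continuity $\bigcap_n I^-(x_n)\subset J^-(x)$ and inner continuity, up to the $\mu$-null boundary sets, as in Minguzzi's argument.
\item \emph{Strict monotonicity.} I would use the maximal causal relation. If $x\le y$ and $x\ne y$, then the distinguishing property (iii) gives a point $z$ with $d(z,x)<d(z,y)$ or $d(y,z)<d(x,z)$. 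In the first case $I^-(y)\setminus I^-(x)$ is a nonempty open set, so $\mu$ gives it positive measure. In the second case the same holds for $I^+(x)\setminus I^+(y)$. Hence $\tau_0\coloneqq t^-+t^+$ is strictly increasing along the causal order.
\end{itemize}

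\textbf{Step 3: the Cauchy property.} I would show that along every future inextendible causal curve $\gamma$, $\mu(I^+(\gamma(t)))\to 0$. Suppose a point $p$ lies in $\bigcap_t I^+(\gamma(t))$. Then $\gamma$ is contained in $J^-(p)$, and it lies above $\gamma(0)$, so it is trapped in the compact diamond $J(\gamma(0),p)$. By Lemma~\ref{lem:inextendible_charac}, $\gamma$ then has a future endpoint $q$. By Lemma~\ref{lem:no_bubbling_(t)-inext}, $q$ lies in $\partial^+_{ca}X=\partial^+_{ch}X$, using the empty bubbling boundary. So $I^+(q)=\emptyset$, yet $q\ll p$ by push-up, a contradiction.

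The same reasoning shows $\mu(I^-(\gamma(t)))\to\mu(I^-(\text{``end''}))$. Normalizing $t^\pm$ by their total masses, $\tau_0$ tends to the endpoint values $\pm 1$, or to the limits of $t^-$.

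To get infinite limits, I would compose with $\tan$ (as $\tau=\tan(\tfrac{\pi}{2}\tau_0)$) in Minguzzi's normalized form, where $\tau_0\to\pm1$ exactly along inextendible curves.

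\textbf{Step 4: the boundary points (main obstacle).} In the boundaryless setting the relevant limit is never attained. Here a curve may attain an endpoint in $\partial^\pm X$, and there $\tau$ must equal $\pm\infty$. So I must show that the normalized $\tau_0$ equals $\pm1$ exactly on $\partial^\pm X$ and nowhere else.
\begin{itemize}
\item For $x\in\partial^+X$ we have $I^+(x)=\emptyset$, so $t^+(x)=0$, which is the extremal value. The remaining task is to arrange $t^-(x)$ to be maximal as well.
\item To handle this, I would follow Minguzzi and replace the single product-type function by an average of many such functions, one for each element of a countable generating family of diamonds. Empty spacelike boundary ensures no point is extremal in both directions simultaneously.
\item For $x\notin\partial^+X$, some nonempty open $I^+(x)$ exists, so $t^+(x)<0$, giving $\tau(x)<\infty$.
\end{itemize}

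Checking that the extremal values are attained precisely on the boundary, and that continuity into $[-\infty,\infty]$ survives at boundary points, is the delicate step.
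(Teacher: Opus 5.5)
There is a genuine gap in your first construction. Indicator-type volume functions of an atomic measure are, in general, neither continuous nor strictly isotone.

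\textbf{Continuity fails.} Your ``up to $\mu$-null boundary sets'' step breaks down because $\mu=\sum_k 2^{-k}\delta_{p_k}$ charges light cones. In $\Lo^{1+1}$, say $p_1=(0,0)$, and take $x=(1,1)$, so that $p_1\in J^-(x)\setminus I^-(x)$. For $y_n=(1+1/n,1)\to x$ you have $I^-(x)\subset I^-(y_n)$ and $p_1\in I^-(y_n)$, hence $t^-(y_n)\ge t^-(x)+\tfrac12$. In an abstract Lorentzian metric space you also have no reference measure for which all sets $J^\pm(x)\setminus I^\pm(x)$ are null, so ``smearing'' is not available.

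\textbf{The monotonicity argument is incorrect.} From $d(z,x)<d(z,y)$ you cannot conclude $z\notin I^-(x)$: take $z$ deep in the chronological past of both points. Moreover, $I^-(y)\setminus I^-(x)$ is not open; for $x\ll y$ it contains $x$, and every neighbourhood of $x$ meets $I^-(x)$. An indicator $\mathbf{1}_{\{d(z,x)>0\}}$ simply cannot register a strict increase of $d(z,\cdot)$ inside $I^-(x)$.

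\textbf{How the paper avoids both problems.} It replaces $\mathbf{1}_{\{d(z_k,x)>0\}}$ by the continuous, strictly increasing quantity $\frac{d(z_k,x)}{1+d(z_k,x)}$. Here $(z_k)$ is a countable dense set that distinguishes points, obtained as in the proof of \cite[Proposition~1.11]{MS24}. Then $f(x)=\sum_k 2^{-k}\frac{d(z_k,x)}{1+d(z_k,x)}$, and the analogous $g$ built from $d(x,z_k)$, are continuous by continuity of $d$. Also, $x\le y$ with $x\neq y$ directly forces $f(x)<f(y)$ or $g(x)>g(y)$.

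\textbf{The additive normalization in Steps 3--4 also fails.} Your trapping argument giving $\mu(I^+(\gamma(t)))\to 0$ is correct; it is essentially the paper's non-imprisonment argument. However, along a future inextendible causal curve, $t^-(\gamma(t))$ tends to $\mu(I^-(\gamma))$, which is in general smaller than $\mu(X)$. For a null line in $\Lo^{1+1}$, $I^-(\gamma)$ is an open half-plane, and the complementary half-plane has positive mass. So $\tau_0=t^-+t^+$ does not reach an extremal value, and $\tan(\tfrac{\pi}{2}\tau_0)$ stays bounded. Averaging over diamonds, as suggested in Step 4, does not change this.

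\textbf{The missing idea is to take a quotient, $\tau=\ln(f/g)$.} Since $f=0$ exactly on $\partial^-X$, $g=0$ exactly on $\partial^+X$, and the spacelike boundary is empty, $\tau\colon X\to[-\infty,\infty]$ is well defined and continuous. It equals $\pm\infty$ exactly on $\partial^\pm X$. Curves attaining a future endpoint end in $\partial^+X$, where $\tau=\infty$ already. For curves without an endpoint it suffices that $g(\gamma(t))\to 0$. Each term vanishes eventually, because $\gamma$ leaves every compact diamond $J^+(\gamma(0))\cap J^-(z_k)$ and hence $J^-(z_k)$ forever. Meanwhile $f\circ\gamma$ stays bounded away from $0$ by isotonicity. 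With this construction your ``delicate'' Step 4 disappears entirely.
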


Examples that satisfy the conditions of this theorem and are not covered by \cite{Mi26} (due to their non-empty chronological boundary) are conical Minkowski spacetimes over a compact domain and the closed upper halfspace in $\Lo^{n+1}$.

Under the assumptions of the theorem,
the complement of the chronological boundary of $X$ is a separable, first-countable Lorentzian metric space with compact causal diamonds and empty boundary. A possible strategy would be to show that \cite[Theorem~21]{Mi26} holds under these assumptions and to extend the resulting Cauchy time function by $\pm \infty$ to $\partial^\pm X = \partial^\pm_{ca} X = \partial^\pm_{ch} X$. Since we have to go through the arguments of \cite[Theorem~21]{Mi26} anyway, we instead directly work in the setting of Theorem \ref{thm:cauchy_time_functions_exist} without removing the boundary before.

In the following, a map $f:X\rightarrow \R$ is called \emph{(anti-)isotone}, if $f(x) \leq f(y)$ ($f(x) \geq f(y)$) for all $x,y \in X$ with $x\leq y$. 

\begin{proof}[Proof of Theorem~\ref{thm:cauchy_time_functions_exist}]

The proof follows the same lines as the proof of \cite[Theorem~21]{Mi26} by Minguzzi. We recall the main arguments and point out the necessary modifications for the convenience of the reader.

The first step is to construct a time function. By the same proof as in \cite[Proposition~1.11]{MS24} and separability, there is a countable dense subset $\mathscr{S}\subset X$ that distinguishes points, that is, for all $x,y \in X$ with $x\neq y$ there exists some $z\in \mathscr{S}$ such that either $d(z,x)\neq d(z,y)$ or $d(x,z) \neq d(y,z)$. We write $\mathscr{S}=(z_k)_{k\in \N}$. The functions
\[
    f(x) \coloneqq \sum_{k=1}^\infty \frac{1}{2^k} \frac{d(z_k,x)}{1+d(z_k,x)} \quad \text{and} \quad g(x) \coloneqq \sum_{k=1}^\infty \frac{1}{2^k} \frac{d(x,z_k)}{1+d(x,z_k)}
\]
satisfy the same properties as in \cite{Mi26} except that they take the value $0$ at the chronological boundary, more precisely, $f: X \rightarrow [0,1]$ is continuous and isotone with $f(x)=0 $ if and only if $ x \in \partial^-X$, and $g: X \rightarrow [0,1]$ is continuous and anti-isotone with $g(x)=0$ if and only if $x \in \partial^+X$. As $\partial^+X \cap \partial^-X= \emptyset$ by assumption, we obtain a well-defined isotone continuous map $\tau\coloneqq \ln(f/g): X \rightarrow [-\infty,\infty]$ with $\tau(x)=\pm \infty$ if and only if $x \in \partial^\pm X$. 

Since $\mathscr{S}$ distinguishes points in $X$, for any pair of points $x\leq y$ with $x\neq y$ in $X$, there is some $z_k \in \mathscr{S}$ that distinguishes them. We suppose that $d(z_k,x) \neq d(z_k,y)$; the other case is analogous. By the reverse triangle inequality, we necessarily have $d(z_k,x) < d(z_k,y)$, which implies that $f(x) < f(y)$ and $g(x) \geq g(y)$. As $x\leq y$ with $x\neq y$,
we have $x \notin \partial^+ X$ and thus $g(x)>0$. It follows that $\tau(x) < \tau (y)$, i.e.~$\tau$ is a time function.

To prove that $\tau$ is actually a strong Cauchy time function, we consider a future inextendible causal curve $\gamma: I \rightarrow X$ and show that $\lim_{t \nearrow\sup I} \tau(\gamma(t))= \infty$. The corresponding past statement follows 
analogously. If $\gamma$ has a future endpoint, then, by inextendibility of $\gamma$ and causal path-connectedness of $X$, it must lie in the future causal boundary $\partial^+ X$, where $\tau$ attains the value $\infty$. Hence, we can assume that $\gamma$ has no future endpoint. We can moreover assume that $I=[0,\infty)$ by reparametrization. In this case, the argument in the proof of \cite[Theorem~21]{Mi26} shows that $\lim_{t\rightarrow \infty} g(\gamma(t))= 0$. Indeed, by Lemma~\ref{lem:inextendible_charac} 
the curve $\gamma$ is not future imprisoned. Therefore, $\gamma$ eventually leaves every (compact) causal diamond $J^+(\gamma(0)) \cap J^-(z_k)$  
and hence also $J^-(z_k)$ forever, so that each summand of $g\circ \gamma$ 
becomes eventually zero forever, which due to the geometric decay given by $2^{-k}$ implies $g(\gamma(t))\to 0$ as $t\to\infty$. Since $f(x)$ is bounded from below for sufficiently large $t$, $\lim_{t\nearrow \sup I} \tau(\gamma(t))= \infty$, and thus $\tau$ is a strong Cauchy time function as claimed.
\end{proof}

Note that we used first-countability and causal simplicity of $X$ when applying Lemma~\ref{lem:inextendible_charac}. In contrast, in the proof of \cite[Theorem~21]{Mi26}, local compactness of a countably generated Lorentzian metric space (see \cite[Proposition 3.3]{BMS25}) is used in this step; see \cite[Proposition 18, Proposition 19]{Mi26}.

\section{Properties of Cauchy sets}\label{app:continuity_Lorentzian_distance}

\subsection{Continuity of the Lorentzian distance to a Cauchy set}
Here we provide the details of the proof of Lemma~\ref{lem:continuity_distance_fct}.

\begin{proof}[Proof of Lemma~\ref{lem:continuity_distance_fct}]
By definition of $d_J(S,p)$, $p\in X$, for any $\varepsilon>0$ there is some $q \in S$ with $d_J(S,p) - \varepsilon < d_J(q,p)$. By lower semicontinuity of the Lorentzian distance, this implies 
\[
d_J(S,p) - \varepsilon \leq  \liminf_{x\rightarrow p} d_J(q,x) \leq \liminf_{x\rightarrow p} d_J(S,x)
\]
and so $d_J(S,p) \leq \liminf_{x\rightarrow p} d_J(S,x)$ by sending $\varepsilon$ to $0$. 

(i) Suppose that $S$ is in addition compact (and hence sequentially compact). By contradiction assume that we have $\limsup_{x \rightarrow p} d_J(S,x) >d_J(S,p)$. Then there is some $\varepsilon>0$ and a sequence $p_n \rightarrow p$ as $n\to \infty$ with \[\limsup_{n\rightarrow \infty}  d_J(S,p_n) > d_J(S,p)+ \varepsilon\,.\] After passing to a subsequence, let $q_n\in S$ be a sequence with $d_J(q_n,p_n) + 1/n \geq d_J(S,p_n)$. By sequential compactness of $S$ we can assume that $q_n$ converges to some $q \in S$. By continuity of the Lorentzian distance, we obtain $d_J(q,p) > d_J(S,p)$, a contradiction.

(ii) Suppose now that $X=M$ is a smooth Lorentzian manifold and that $S$ is an acausal Cauchy hypersurface. We fix an auxiliary complete Riemannian background metric $h$ on $M$; cf.~\cite{NO61}. By \cite[Corollary 3.47]{Mi19} and Lemma~\ref{lem:prop_cauchy_set} for every $p\in M$ there is a maximal (continuous) causal curve from $p$ to $S$ with Lorentzian length $d(p,S)$. We replace it by a smooth maximal causal curve with the same endpoints, and subsequently extend it to an inextendible causal curve $\gamma$; see Remarks on Lemma~\ref{rem:rem_on_inext}, (a). In fact, by Remarks on Lemma~\ref{rem:rem_on_inext}, (b), we can assume that $\gamma$, when parametrized by $h$-arclength, is defined on $(-\infty,\infty)$. Let $p\in M$ and let $p_n \in M$ be a sequence converging to $p$. By Lemma~\ref{lem:prop_cauchy_set}, we can assume that $p_n \in I^-(S)$ as $d_J(S,S)=0$. By the previous argument, there is a sequence of inextendible causal curves $\gamma_n:(-\infty,\infty) \rightarrow M$  parametrized by $h$-arclength with $\gamma_n(0)=p_n$, $\gamma_n(a_n)\in S$ for some $a_n > 0$, $d(p_n,\gamma_n(a_n))=d(p_n,S)$, and such that $\gamma_n$ is maximal on $[0,a_n]$. By the limit curve theorem in the formulation of \cite[Theorem 2.51]{Mi19}, the curves $\gamma_n$ converge up to a subsequence uniformly on compact sets to an inextendible causal curve $\gamma: (-\infty,\infty) \rightarrow M$ with $\gamma(0)=p$. 
This curve intersects $S$ by \cite[Lemma 14.29]{On83}, i.e.~there is some $a\in \R$ with $\gamma(a) \in S$. Since $S$ is acausal, there is a unique such $a$ and $\gamma(t) \in I^+(S)$ for all $t>a$. By passing to another subsequence,  we can assume that $a_n \rightarrow b \in [0,\infty]$. If $b= \infty$, choose a $t>a$ such that $\gamma(t)\in I^+(S)$; since $I^+(S)$ is open, local uniform convergence gives $\gamma_n(t)\in I^+(S)$ for all sufficiently large $n$. Taking $n$ larger if necessary, we have $a_n>t$ with $\gamma_n(a_n)\in S$, a contradiction. Hence, $b< \infty$. Uniqueness of $a$ and closedness of $S$ (see e.g. \cite[Proposition 5.9]{BG25}) imply that $a=b$. Hence, $\gamma_n(0)\rightarrow \gamma(0)=p$ and $\gamma_n(a_n)\rightarrow \gamma(a)$ as $n\to\infty$ by uniform convergence and  continuity of $\gamma$. Therefore,
\[
    d_J(p_n,S)=d_J(\gamma_n(0),\gamma_n(a_n)) \rightarrow d_J(p,\gamma(a)) \leq d_J(p,S) \, 
\]
by continuity of $d_J$, which shows upper semicontinuity of $d_J(S,\cdot)$.
\end{proof}

\subsection{Acausality of parallel Cauchy sets}
\label{sub:acausality_of_Cauchy_sets}

Here we provide a proof for Lemma~\ref{lem:acausal_parallel_Sets}.

\begin{proof}[Proof of Lemma~\ref{lem:acausal_parallel_Sets}] By Lemma~\ref{lem:continuity_distance_fct} the distance function $d_J(S,\cdot)$ is continuous, and thus all the parallel sets $S_t$ are Cauchy hypersurfaces by Lemma~\ref{lem:parallel_cauchy}. Since every Cauchy hypersurface in a smooth Lorentzian manifold is intersected by any inextendible causal curve (see \cite[Lemma 14.29]{On83}, for instance), it remains to show that no parallel set $S_t$ is intersected twice by a causal curve. 

Suppose a causal curve $\gamma$ connects two points $y,z \in S_t$ with $y \leq z$. We can assume without loss of generality that $t>0$. Then $y \notin \partial^+M$, and the reverse triangle inequality implies that
\[
    t=d_J(S,z) \geq  d_J(S,y) + d(y,z) \geq d_J(S,y)=t \, .
\]
Hence, $d(y,z)=0$ and $d_J(S,z)=d_J(S,y)$. Recall that $S$ is closed by Lemma~\ref{lem:prop_cauchy_set}. Therefore, by \cite[Corollary 3.47]{Mi19} there is a maximal causal curve from a point $x\in S$ to $y$ of Lorentzian length $d_J(S,y)$. The concatenation of this curve with a null geodesic from $y$ to $z$ is a causal curve of Lorentzian length \[d_J(x,y)=d_J(S,y)=d_J(S,z) \geq d_J(x,z) \geq d_J(x,y)+d_J(y,z)\geq d_J(x,y)\]from $x$ to $z$, i.e.~a maximal causal curve from $x$ to $z$ of length $d_J(x,z)=d_J(S,y)=t>0$. Since this curve is timelike and nonconstant between $x$ and $y$, it is thus timelike everywhere by the push-up principle in the form of \cite[Theorem 3.20]{KS18}. Consequently, $y=z$, and the claim follows.
\end{proof}


\end{document}